\documentclass[a4paper,12pt]{amsart}

\usepackage{fullpage}
\usepackage{amsmath,amssymb,graphicx,psfrag}
\usepackage[T1]{fontenc}
\usepackage{amsmath,amssymb,ifthen}
\usepackage{color}
\usepackage{moreverb}


\newcommand{\N}{\mathbb{N}}
\newcommand{\R}{\mathbb{R}}
\newcommand{\HH}{\mathcal{H}}
\newcommand{\KK}{\mathcal{K}}
\newcommand{\MM}{\mathcal{M}}
\newcommand{\OO}{\mathcal{O}}
\newcommand{\RR}{\mathcal{R}}
\newcommand{\SSS}{\mathcal{S}}
\newcommand{\TT}{\mathcal{T}}

\newcommand{\UU}{\mathcal{U}}
\newcommand{\XX}{\mathcal{X}}

\newcommand{\dual}[3][]{#1\langle#2\,,\,#3#1\rangle}

\newcommand{\enorm}[2][]{#1|\!#1|\!#1|\,#2\,#1|\!#1|\!#1|}
\newcommand{\norm}[3][]{#1\|#2#1\|_{#3}}
\newcommand{\diam}{{\rm diam}}
\newcommand{\set}[3][\big]{#1\{#2\,:\,#3#1\}}
\newcommand{\refine}{{\rm refine}}

\newtheorem{lemma}{Lemma}
\newtheorem{theorem}[lemma]{Theorem}
\newtheorem{algorithm}[lemma]{Algorithm}
\newtheorem{remark}[lemma]{Remark}
\newtheorem{proposition}[lemma]{Proposition}
\newtheorem{corollary}[lemma]{Corollary}

\renewcommand{\subsection}[1]{\refstepcounter{subsection}\medskip{\bf\thesubsection.~#1.}}

\newenvironment{explain}{\begin{list}{$\bullet$}{%
\setlength{\labelsep}{2.3mm}%
\setlength{\labelwidth}{3mm}%
\setlength{\leftmargin}{5mm}%
}}{\end{list}}

\def\Cmesh{C_{\rm mesh}}
\def\Cson{C_{\rm son}}
\def\Cdopt{C_{1}}
\def\Copt{C_{\rm opt}}
\def\Cdopttwo{C_{2}}
\def\Cdoptthree{C_{3}}
\newcommand{\A}{\mathbb{A}}
\def\Cstab{C_{\rm stb}}

\def\plus{\bullet}%
\def\Cmark{C_{\rm mark}}
\def\qmesh{q_{\rm mesh}}
\newcommand{\T}{\mathbb{T}}

\def\eps{\varepsilon}


\newcommand{\oooldRevision}[1]{#1}
\newcommand{\ooldRevision}[1]{#1}
\newcommand{\oldRevision}[1]{#1}
\newcommand{\revision}[1]{#1}


\usepackage{fancyhdr}
\lfoot{\small\today}
\cfoot{\small\thepage}
\lhead{}
\rhead{}

\advance\footskip0.4cm
\textheight=54pc    
\advance\textheight-0.4cm
\calclayout
\pagestyle{fancy}


\newcommand*\patchAmsMathEnvironmentForLineno[1]{%
  \expandafter\let\csname old#1\expandafter\endcsname\csname #1\endcsname
  \expandafter\let\csname oldend#1\expandafter\endcsname\csname end#1\endcsname
  \renewenvironment{#1}%
     {\linenomath\csname old#1\endcsname}%
     {\csname oldend#1\endcsname\endlinenomath}}%
\newcommand*\patchBothAmsMathEnvironmentsForLineno[1]{%
  \patchAmsMathEnvironmentForLineno{#1}%
  \patchAmsMathEnvironmentForLineno{#1*}}%
\AtBeginDocument{%
\patchBothAmsMathEnvironmentsForLineno{equation}%
\patchBothAmsMathEnvironmentsForLineno{align}%
\patchBothAmsMathEnvironmentsForLineno{flalign}%
\patchBothAmsMathEnvironmentsForLineno{alignat}%
\patchBothAmsMathEnvironmentsForLineno{gather}%
\patchBothAmsMathEnvironmentsForLineno{multline}%
}
\usepackage[mathlines]{lineno}



\title{Adaptive FEM with Coarse Initial Mesh\\Guarantees Optimal Convergence Rates for \\Compactly Perturbed Elliptic Problems}

\author{Alex Bespalov}
\address{School of Mathematics, University of Birmingham, Edgbaston, Birmingham B15 2TT, UK}
\email{A.Bespalov@bham.ac.uk}

\author{Alexander Haberl}
\author{Dirk Praetorius}
\address{TU Wien, Institute for Analysis and Scientific Computing, Wiedner Hauptstra\ss{}e 8--10, 1040 Wien, Austria}
\email{Alexander.Haberl@asc.tuwien.ac.at}
\email{Dirk.Praetorius@asc.tuwien.ac.at\qquad\rm(corresponding author)}

\keywords{adaptive mesh-refinement, optimal convergence rates, a~posteriori error estimate, Helmholtz equation}
\thanks{{\em Acknowledgements.} This work was initiated when DP visited the University of Birmingham to attend the \emph{Workshop on adaptive algorithms for computational PDEs}. The authors thank Gabriel Barrenechea for an informal and stimulating discussion on the topic of this work when they met at this workshop. \oldRevision{The authors also thank Christian Kreuzer for his comments on an earlier version of this paper that led to major improvements.} AH and DP acknowledge support through the research project \emph{Optimal adaptivity for BEM and FEM-BEM coupling}, funded by the Austria Science Fund (FWF) under grant P27005.}
\begin{document}

\begin{abstract}
We prove that for compactly perturbed elliptic problems, where the corresponding bilinear form satisfies a G\r{a}rding inequality, adaptive mesh-refinement is capable of overcoming the preasymptotic behavior and eventually leads to convergence with optimal algebraic rates. As an important consequence of our analysis, one does not have to deal with the {\sl a~priori} assumption that the underlying meshes are sufficiently fine. Hence, the overall conclusion of our results is that adaptivity has stabilizing effects and can overcome 
possibly pessimistic restrictions on the meshes.
In particular, our analysis covers adaptive mesh-refinement for the finite element discretization of the Helmholtz equation from where our interest originated.

\end{abstract}

\maketitle
\thispagestyle{fancy}

\section{Introduction}
\label{section:introduction}

%
\subsection{Adaptive mesh-refining algorithms}
{\sl A~posteriori} error estimation and related adaptive mesh-refinement is one fundamental column of finite element analysis. On the one hand, the {\sl a~posterori} error estimator allows to monitor whether the numerical solution is sufficiently accurate, even though the exact solution is unknown. On the other hand, its local contributions allow to adapt the underlying triangulation to resolve possible singularities most effectively. In recent years, the mathematical understanding of adaptive mesh-refinement has matured. It is now known that adaptive finite element methods (AFEM) of the type
\begin{align}\label{eq:algorithm}
\boxed{\tt~SOLVE~}\quad\Longrightarrow\quad
\boxed{\tt~ESTIMATE~}\quad\Longrightarrow\quad
\boxed{\tt~MARK~}\quad\Longrightarrow\quad
\boxed{\tt~REFINE~}
\end{align}
converge with optimal algebraic rate; see~\cite{doerfler,mns,bdd,stevenson07,ckns,ffp} to mention some milestones for conforming AFEM for linear second-order elliptic PDEs and~\cite{axioms} for some axiomatic approach. Essentially, only problems satisfying the Lax-Milgram theorem have been treated~\cite{doerfler,mns,bdd,stevenson07,ckns}. In a more general case of compactly perturbed elliptic problems, existing results have the limitation that the initial mesh has to be sufficiently fine~\cite{mn2005,cn,ffp}. 
On the other hand, numerical examples in the engineering literature
suggest that adaptive mesh-refinement performs well even if the initial mesh is coarse
(see, e.g., \cite{MR1393800,MR1643064,ihlenburg} in the case of the Helmholtz equation).
The purpose of this work is to bridge this gap at least for conforming elements.%

\subsection{Model problem}
\label{section:modelproblem}
Let $\Omega\subset\R^d$ be a polygonal resp.\ polyhedral Lipschitz domain, $d\ge2$. Let $\dual{f}{g}:=\int_\Omega fg\,dx$ denote the $L^2(\Omega)$ scalar product.
Suppose that $a(\cdot,\cdot)$ is a symmetric, continuous, and elliptic bilinear form on $\HH:=H^1_0(\Omega)$ and that $\KK:H^1_0(\Omega)\to L^2(\Omega)$ is a continuous linear operator. Given $f\in L^2(\Omega)$, we suppose that the variational formulation
\begin{align}\label{eq:weakform}
 b(u,v) := a(u,v) + \dual{\KK u}{v} = \dual{f}{v}
 \quad\text{for all }v\in \HH
 \end{align} 
admits a unique solution $u\in \HH$. Possible examples include the weak formulation of the Helmholtz equation 
\begin{align}\label{eq:helmholtz}
-\Delta u - \kappa^2u = f \text{ in }\Omega
\quad\text{subject to}\quad
u = 0 \text{ on }\partial\Omega,
\end{align}
where $\kappa^2\in\R$ is not an eigenvalue of $-\Delta$ and $\KK u = -\kappa^2u$, as well as more general 
diffusion problems with convection and reaction 
\begin{align}\label{eq:convection}
-{\rm div}(A\nabla u) + b\cdot\nabla u + cu = f \text{ in }\Omega
\quad\text{subject to}\quad
u = 0 \text{ on }\partial\Omega,
\end{align}
for given coefficients $c\in L^\infty(\Omega)$, $b\in L^\infty(\Omega)^d$, and $A\in L^\infty(\Omega)^{d\times d}$, where $A(x)\in\R^{d\times d}_{\rm sym}$ is symmetric and uniformly positive definite. We note that homogeneous Dirichlet conditions are only considered for the ease of presentation, while (inhomogeneous) mixed Dirichlet-Neumann-Robin boundary conditions can be included as in~\cite{dirichlet2d,dirichlet3d,axioms}.

We consider standard finite element spaces based on regular triangulations $\TT_\bullet$ of $\Omega$. For some fixed polynomial degree $p\ge1$, let
\begin{align*}
\SSS^p(\TT_\bullet) := \set{V_\bullet\in C(\Omega)}{\forall T\in\TT_\bullet\quad V_\bullet|_T\text{ is a polynomial } \text{of degree }\le p}
\end{align*} 
 be the usual finite element space of globally continuous piecewise polynomials and $\XX_\bullet := \SSS^p(\TT_\bullet)\cap H^1_0(\Omega)$ be the corresponding conforming subspace of $H^1_0(\Omega)$. Then, the discrete formulation reads as follows: Find $U_\bullet\in\XX_\bullet$ such that
\begin{align}\label{eq:discreteform}
 b(U_\bullet,V_\bullet)
 = \dual{f}{V_\bullet}
 \quad\text{for all }V_\bullet\in\XX_\bullet.
\end{align}
Let $h_\bullet\in L^\infty(\Omega)$ denote the local mesh-size function defined by $h_\bullet|_T := |T|^{1/d}$ for all $T\in\TT_\bullet$. Note that $h_\bullet|_T$ behaves like the diameter of the element $T\in\TT_\bullet$ on shape-regular meshes.
In general,~\eqref{eq:discreteform} may fail to allow for a (unique) solution $U_\bullet\in\XX_\bullet$. However, existence and uniqueness are guaranteed if $\TT_\bullet$ is sufficiently fine (see Corollary~\ref{cor:uniform}), e.g., 
$\norm{h_\bullet}{L^\infty(\Omega)}\le H\ll1$. Therefore, we employ one step of uniform refinement if~\eqref{eq:discreteform} does not allow for a unique solution $U_\bullet\in\XX_\bullet$.

\subsection{Contributions of present work}
Given an initial triangulation $\TT_0$, a typical adaptive algorithm~\eqref{eq:algorithm} generates a sequence of refined meshes $\TT_\ell$ with corresponding nested spaces $\XX_\ell\subseteq\XX_{\ell+1}\subset\HH$ for all $\ell\ge0$.
We stress that unlike prior works~\cite{mn2005,cn,ffp}, our adaptive algorithm (Algorithm~\ref{algorithm}) will not be given any information on whether the current mesh is sufficiently fine to allow for a unique solution.
In particular, we do not assume that the given initial mesh $\TT_0$ (and, in fact, any adaptive mesh $\TT_\ell$ generated by our algorithm) is sufficiently fine.
Nevertheless, we derive similar results as for uniformly elliptic problems (see, e.g., \cite{ckns,ffp,axioms} and the references therein), i.e., we prove linear convergence (Theorem~\ref{theorem:convergence}) with optimal algebraic convergence rates (Theorem~\ref{theorem:optimal}). More precisely, the framework and the main contributions of the present work can be summarized as follows:
\begin{itemize}
\item We consider a fixed mesh-refinement strategy that satisfies certain abstract assumptions (Section~\ref{section:mesh} and Section~\ref{section2:mesh}) which are met, e.g., for newest vertex bisection~\cite{stevenson08,kpp}.
\item We consider a fixed {\sl a~posteriori} error estimation strategy which satisfies the \emph{stability property on non-refined element domains}~\eqref{axiom:stability}, the \emph{reduction property on refined element domains}~\eqref{axiom:reduction}, and \oooldRevision{the \emph{reliabilty property}~\eqref{axiom:reliability:new} as well as} the \emph{discrete reliability property}~\eqref{axiom:reliability}. 
\item Under the above assumptions on the mesh-refinement and the error estimation strategy, we formulate our variant (Algorithm~\ref{algorithm}) of the adaptive loop~\eqref{eq:algorithm}, where marking is based on the D\"orfler marking  criterion introduced in~\cite{doerfler} with some adaptivity parameter $0<\theta\le1$. 
\item If the ``discrete'' limit space $\XX_\infty:=\overline{\bigcup_{\ell=0}^\infty\XX_\ell}$ satisfies an assumption~\oooldRevision{\eqref{axiom:infty}} which can be ensured by expanding the set of marked elements in the D\"orfler marking strategy (Section~\ref{section:marking}), we prove linear convergence (Theorem~\ref{theorem:convergence}) for any $0<\theta\le1$.
\item Starting from an index $L\in\N_0$, we prove that the C\'ea lemma is valid for the $a(\cdot,\cdot)$-induced energy norm and $\ell\ge L$, and the corresponding quasi-optimality constants converge to $1$ as $\ell\to\infty$ (Theorem~\ref{prop:cea}).
\item If additionally $0<\theta\ll1$ is sufficiently small \ooldRevision{and $\XX_\infty=\HH$ (which can be ensured by the expanded D\"orfler marking strategy mentioned above)}, we prove optimal algebraic convergence rates (Theorem~\ref{theorem:optimal}). While our presentation employs the estimator-based approximation classes from~\cite{axioms}, Section~\ref{section:classes} also discusses the relation to the approximation classes based on the total error from~\cite{ckns}.
\end{itemize}
We note that the entire analysis of this work applies to general situations, where $\HH$ is a separable Hilbert space over $\mathbb{K}\in\{\R,\mathbb{C}\}$, $\XX_\ell\subseteq\HH$ are conforming subspaces, and $\KK:\HH\to\HH^*$ is a compact operator; see Section~\ref{section:abstract}. 

%
\subsection{Outline}
Section~\ref{section:algorithm} provides the abstract framework of our analysis (Section~\ref{section:abstract}--\ref{section:mesh}) and gives a precise statement of the adaptive algorithm (Section~\ref{section:adaptive_algorithm}). Section~\ref{section:axioms} adapts~\cite{axioms} to the present setting and formulates certain properties of the error estimator. 
Section~\ref{section:convergence} proves convergence of the adaptive algorithm. Following~\cite{ffp}, we first prove plain convergence (Section~\ref{section:convergence:plain}) and then derive linear convergence (Section~\ref{section:convergence:linear}). Finally, we address the validity of the C\'ea lemma (Section~\ref{section:cea}).
Optimal algebraic convergence rates are the topic of Section~\ref{section:rates}, where we also discuss the involved approximation classes (Section~\ref{section:classes}). In the final Section~\ref{section:numerics}, we present numerical results for the 2D Helmholtz equation that underpin the developed theory.

\medskip

{\bf Notation.} We use $\lesssim$ to abbreviate $\le$ up to some (generic) multiplicative constant which is clear from the context. Moreover, $\simeq$ abbreviates that both estimates $\lesssim$ and $\gtrsim$ hold. Throughout, the mesh-dependence of (discrete) quantities is explicitly stated by use of appropriate indices, e.g., $U_\bullet$ is the discrete solution for the triangulation $\TT_\bullet$ and $\eta_\ell$ is the error estimator with respect to the triangulation $\TT_\ell$.

\section{Adaptive Algorithm}
\label{section:algorithm}
\subsection{Abstract setting}
\label{section:abstract}%
\def\K{\mathbb{K}}%
\def\C{\mathbb{C}}%
The model problem from Section~\ref{section:modelproblem} can be recast in the following abstract setting. Let $\HH$ be a separable Hilbert space over $\K\in\{\R,\C\}$. For each triangulation $\TT_\bullet$ with local mesh-size $h_\bullet\in L^\infty(\Omega)$, let $\XX_\bullet\subseteq\HH$ be a conforming finite-dimensional subspace. Suppose that $a(\cdot,\cdot)$ is a hermitian, continuous, and elliptic sesquilinear form on $\HH$, i.e., there exists some constant $\alpha>0$ such that
\begin{align}\label{eq:elliptic}
 \alpha\,\norm{v}\HH^2 \le a(v,v)
 \quad\text{for all }v\in\HH.
\end{align}
In particular, the $a(\cdot,\cdot)$-induced energy norm $\enorm{v}^2:=a(v,v)$ is an equivalent norm on $\HH$, i.e., $\enorm{v}\simeq\norm{v}{\HH}$ for all $v\in\HH$.
Let $\HH^*$ be the dual space of $\HH$, and let $\dual\cdot\cdot$ denote the corresponding duality pairing. Suppose that $\KK:\HH\to\HH^*$ is a compact linear operator and $f\in\HH^*$. In the remainder of this work, we consider the weak formulation~\eqref{eq:weakform} as well as its discretization~\eqref{eq:discreteform} within the above abstract framework.

The next proposition is an improved version of \cite[Theorem 4.2.9]{sauterschwab}. Even though the result appears to be well-known, we did not find the precise statement in the literature. We note that a similar result is proved in~\cite[Theorem~5.7.6]{brennerscott} under additional regularity assumptions for the dual problem. Instead, our proof below proceeds without considering the dual problem, and hence no additional regularity assumptions are needed. For these reasons and for the convenience of the reader, we include the following statement together with its proof.

\begin{proposition}
\label{prop:uniform}
Suppose well-posedness of~\eqref{eq:weakform}, i.e., 
\begin{align}\label{eq:wellposedness}
\forall w\in\HH\quad\big[w=0\quad\Longleftrightarrow\quad \big(\forall v\in\HH\quad b(w,v)=0\big)\big].
\end{align}
Suppose that \oldRevision{$(\XX_\ell)_{\ell\in\N_0}$ is a dense sequence of discrete subspaces $\XX_\ell\subset\HH$, i.e.,
\begin{align}\label{eq:density}
 \lim_{\ell\to\infty}\min_{V_\ell\in\XX_\ell}\revision{\norm{v-V_\ell}\HH}=0
 \quad\text{for all }v\in\HH.
\end{align}}%
Then, there exists
some index $\ell_\bullet\in\N_0$ such that for all discrete subspaces $\XX_\bullet\subset\HH$ with $\XX_\bullet \supseteq \XX_{\ell_\bullet}$, the following holds: \ooldRevision{\revision{There} exists $\gamma>0$ which depends only on \oooldRevision{$\XX_{\ell_\bullet}$, such} that the $\inf$-$\sup$ constant of $\XX_\bullet$ is uniformly bounded from below, i.e., 
\begin{align}\label{eq:infsup}
 \gamma_\bullet := \inf_{W_\bullet \in \XX_\bullet \setminus \{0\}} \sup_{V_\bullet \in \XX_\bullet \setminus \{0\}} \frac{|b(W_\bullet,V_\bullet)|}{\norm{W_\bullet}{\HH}\norm{V_\bullet}{\HH}} \ge \gamma > 0.
\end{align}}%
\oldRevision{In particular,} the discrete formulation~\eqref{eq:discreteform} admits a unique solution $U_\bullet\in\XX_\bullet$. \oldRevision{Moreover}, there holds uniform validity of the C\'ea lemma, i.e., there is a constant $C>0$ which depends \oldRevision{only on $b(\cdot,\cdot)$ and $\gamma$} but not on $\XX_\bullet$, such that
\begin{align}\label{eq:sauterschwab_cea}
 \norm{u-U_\bullet}{\HH} \le C\,\min_{V_\bullet \in \XX_\bullet}\norm{u-V_\bullet}{\HH}.
\end{align} 
If the spaces $\XX_\ell$ are nested, i.e., $\XX_\ell\subseteq\XX_{\ell+1}$ for all $\ell\in\N_0$, the latter guarantees convergence $\norm{u-U_\ell}{\HH}\to0$ as $\ell\to\infty$.
\end{proposition}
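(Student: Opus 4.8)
The plan is to establish the uniform discrete inf--sup estimate~\eqref{eq:infsup} first, by a compactness argument, and then to obtain the remaining claims --- unique solvability of~\eqref{eq:discreteform}, the uniform C\'ea lemma~\eqref{eq:sauterschwab_cea}, and convergence in the nested case --- as essentially standard consequences. I note at the outset that $a(\cdot,\cdot)$ induces an isomorphism $\HH\to\HH^*$ by the Lax--Milgram lemma, so that the operator associated with $b(\cdot,\cdot)=a(\cdot,\cdot)+\dual{\KK\cdot}{\cdot}$ is a compact perturbation of an isomorphism, hence Fredholm of index zero; together with well-posedness~\eqref{eq:wellposedness} this shows that~\eqref{eq:weakform} admits a unique solution $u\in\HH$ for every $f\in\HH^*$, so that~\eqref{eq:sauterschwab_cea} is meaningful. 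Throughout, let $C_b>0$ denote the continuity constant of $b(\cdot,\cdot)$.

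For~\eqref{eq:infsup} I argue by contradiction: if the claim failed, then for every $n\in\N$ (taking the candidate index $n$ and threshold $1/n$) there would be a discrete subspace $\XX_{\bullet_n}\supseteq\XX_n$ with $\gamma_{\bullet_n}<1/n$, and hence some $W_n\in\XX_{\bullet_n}$ with $\norm{W_n}\HH=1$ and $|b(W_n,V)|\le\tfrac1n\,\norm{V}\HH$ for all $V\in\XX_{\bullet_n}$. Passing to a subsequence (not relabelled), $W_n\rightharpoonup w$ weakly in $\HH$. For fixed $v\in\HH$, the density assumption~\eqref{eq:density} yields $V_n\in\XX_n\subseteq\XX_{\bullet_n}$ with $\norm{v-V_n}\HH\to0$; using $V_n$ as a test function and the continuity of $b(\cdot,\cdot)$,
\begin{align*}
 |b(W_n,v)| \le |b(W_n,V_n)| + |b(W_n,v-V_n)| \le \tfrac1n\,\norm{V_n}\HH + C_b\,\norm{v-V_n}\HH \longrightarrow 0.
\end{align*}
On the other hand, $a(W_n,v)\to a(w,v)$ by weak convergence, and $\KK W_n\to\KK w$ strongly in $\HH^*$ by compactness of $\KK$, so $\dual{\KK W_n}{v}\to\dual{\KK w}{v}$; altogether $b(W_n,v)\to b(w,v)$, whence $b(w,v)=0$. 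Since $v\in\HH$ was arbitrary, well-posedness~\eqref{eq:wellposedness} forces $w=0$.

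To close the contradiction I test with $V=W_n$ itself: $|b(W_n,W_n)|\le1/n\to0$. Writing $b(W_n,W_n)=\enorm{W_n}^2+\dual{\KK W_n}{W_n}$ and using $\KK W_n\to\KK w=0$ strongly in $\HH^*$ together with $\norm{W_n}\HH=1$, we get $|\dual{\KK W_n}{W_n}|\le\norm{\KK W_n}{\HH^*}\to0$, hence $\enorm{W_n}^2=b(W_n,W_n)-\dual{\KK W_n}{W_n}\to0$ (taking real parts if $\K=\C$). This contradicts $\enorm{W_n}^2=a(W_n,W_n)\ge\alpha\,\norm{W_n}\HH^2=\alpha>0$, and~\eqref{eq:infsup} follows. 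I expect this to be the main obstacle: the compactness of $\KK$ must be used twice --- first to pass to the limit in $b(W_n,v)$, and, crucially, once more to force $\dual{\KK W_n}{W_n}\to0$ although both arguments converge only weakly --- and the subspaces $\XX_{\bullet_n}$ have to be indexed along the $\XX_n$ so that~\eqref{eq:density} can be invoked for the chosen test functions.

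The remaining assertions are then routine. For $\XX_\bullet\supseteq\XX_{\ell_\bullet}$, estimate~\eqref{eq:infsup} shows that $W_\bullet\mapsto b(W_\bullet,\cdot)|_{\XX_\bullet}$ is injective on the finite-dimensional space $\XX_\bullet$, hence bijective, so~\eqref{eq:discreteform} has a unique solution $U_\bullet$. With Galerkin orthogonality $b(u-U_\bullet,W_\bullet)=0$ for all $W_\bullet\in\XX_\bullet$, together with~\eqref{eq:infsup} and continuity of $b(\cdot,\cdot)$, one obtains for every $V_\bullet\in\XX_\bullet$
\begin{align*}
 \gamma\,\norm{U_\bullet-V_\bullet}\HH \le \sup_{W_\bullet\in\XX_\bullet\setminus\{0\}}\frac{|b(U_\bullet-V_\bullet,W_\bullet)|}{\norm{W_\bullet}\HH} = \sup_{W_\bullet\in\XX_\bullet\setminus\{0\}}\frac{|b(u-V_\bullet,W_\bullet)|}{\norm{W_\bullet}\HH} \le C_b\,\norm{u-V_\bullet}\HH,
\end{align*}
and the triangle inequality then gives~\eqref{eq:sauterschwab_cea} with $C:=1+C_b/\gamma$ after taking the infimum over $V_\bullet\in\XX_\bullet$. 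Finally, if the spaces are nested, then $\XX_\ell\supseteq\XX_{\ell_\bullet}$ for all $\ell\ge\ell_\bullet$, so~\eqref{eq:sauterschwab_cea} applies and $\norm{u-U_\ell}\HH\le C\min_{V_\ell\in\XX_\ell}\norm{u-V_\ell}\HH\to0$ by~\eqref{eq:density}.
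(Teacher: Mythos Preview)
Your proof is correct and follows essentially the same route as the paper: a contradiction argument for the uniform discrete inf--sup bound, using density to show the weak limit $w$ satisfies $b(w,v)=0$ for all $v\in\HH$, then compactness of $\KK$ to force $\enorm{W_n}\to0$ against $\norm{W_n}\HH=1$. The only cosmetic differences are that the paper uses the orthogonal projection onto $\widehat\XX_\ell$ in place of your density sequence $V_n\in\XX_n$, invokes $b(\cdot,v)\in\HH^*$ directly rather than splitting into $a$ and $\KK$, and cites references for the C\'ea estimate where you spell it out.
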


\begin{proof}
The bilinear form $b(\cdot,\cdot)$ induces \ooldRevision{the linear and continuous operator
\begin{align*}
\begin{array}{rcrcll}
B_\bullet: \XX_\bullet \rightarrow \XX_\bullet^\ast,&& \dual{B_\bullet W_\bullet}{V_\bullet} &:=& b(W_\bullet,V_\bullet) &\text{for all} \,\, V_\bullet,W_\bullet \in \XX_\bullet,
\end{array}
\end{align*}}%
where $\XX_\bullet$ is an arbitrary discrete subspace of $\HH$ with dual space $\XX_\bullet^*$.

\emph{Step 1: Discrete inf-sup condition.}\quad
Since $\XX_\bullet$ is finite dimensional and since we use the same discrete ansatz and test space, well-posedness of~\eqref{eq:discreteform} is equivalent to the discrete $\inf$-$\sup$ condition 
\begin{align}\label{eq:discrete_inf_sup_a}
 \gamma_\bullet = \inf_{W_\bullet \in \XX_\bullet \setminus \{0\}} \sup_{V_\bullet \in \XX_\bullet \setminus \{0\}} \frac{|b(W_\bullet,V_\bullet)|}{\norm{W_\bullet}{\HH}\norm{V_\bullet}{\HH}} = \inf_{W_\bullet \in \XX_\bullet \setminus \{0\}} \frac{\norm{B_\bullet W_\bullet}{\XX_\bullet^*}}{\norm{W_\bullet}\HH}
  > 0.
\end{align}
(Note that~\eqref{eq:discrete_inf_sup_a} implies that $B_\bullet$ is injective, and surjectivity follows from finite dimensionality of $\XX_\bullet$, i.e., $\dim\XX_\bullet = \dim\XX_\bullet^*<\infty$.)
Moreover, in this case there holds inequality~\eqref{eq:sauterschwab_cea} with 
\begin{align*}
 C:=1+\frac{M}{\gamma_\bullet},
 \quad\text{where}\quad
 M := \sup_{\substack{v\in\HH\backslash\{0\}\\w\in\HH\backslash\{0\}}}\frac{|b(w,v)|}{\norm{w}\HH\norm{v}\HH};
\end{align*}
see, e.g.,~\cite[Theorem~3.6, Lemma~3.7]{braess} or~\cite[Section~3]{demkowicz}.
Therefore, it is sufficient to prove the following assertion:
\begin{align}\label{eq:prop:claim}
 \exists\gamma>0 \, \exists\ell_\bullet\in\N_0 \, \forall \XX_\bullet\subset\HH\text{ with }\XX_\bullet\supseteq\XX_{\ell_\bullet}
 \quad \inf_{W_\bullet \in \XX_\bullet \setminus \{0\}} \frac{\norm{B_\bullet W_\bullet}{\XX_\bullet^*}}{\norm{W_\bullet}\HH} \ge \gamma.
\end{align}
We will prove~\eqref{eq:prop:claim} by contradiction.

\medskip

\emph{Step 2:}\quad Let us assume that~\eqref{eq:prop:claim} is wrong and hence
\begin{align}\label{eq:prop:assumption}
 \forall\gamma>0 \, \forall\ell_\bullet\in\N_0 \, \exists \XX_\bullet\subset\HH\text{ with }\XX_\bullet\supseteq\XX_{\ell_\bullet}
 \quad \inf_{W_\bullet \in \XX_\bullet \setminus \{0\}} \frac{\norm{B_\bullet W_\bullet}{\XX_\bullet^*}}{\norm{W_\bullet}\HH} < \gamma.
\end{align}
For each $\ell_\bullet = \ell\ge0$ and $\gamma=1/\ell$, we can thus find a discrete subspace $\XX_\bullet = \widehat\XX_\ell\subset\HH$ and an element $\widehat W_\ell \in \widehat\XX_\ell$ such that 
\begin{align*}
 \widehat\XX_\ell \supseteq \XX_\ell,
 \quad
 \norm{\widehat W_\ell}{\HH}=1,
 \quad\text{and}\quad
 \norm{\widehat B_\ell \widehat W_\ell}{\widehat\XX_\ell^*} < 1/\ell.
\end{align*}
Since the sequence $\widehat W_\ell$ is bounded and without loss of generality, we may assume weak convergence $\widehat W_\ell\rightharpoonup w\in\HH$ as $\ell\to\infty$.

\medskip

\emph{Step 3: There holds $w=0$.}\quad
Let $\widehat P_\ell:\HH \rightarrow \widehat{\XX}_\ell$ be the orthogonal projection onto $\widehat\XX_\ell$ and
$v\in\HH$. Then, weak convergence $\widehat W_\ell\rightharpoonup w$ and $b(\cdot,v)\in\HH^*$ prove $b(\widehat W_\ell,v) \rightarrow b(w,v)$ as $\ell \rightarrow \infty$.
Moreover, we employ $\norm{\widehat W_\ell}\HH=1$ and $\norm{\widehat P_\ell v}{\HH}\le\norm{v}{\HH}$
to estimate
\begin{align*}
 |b(\widehat W_\ell,v)| 
 &\le |b(\widehat W_\ell,\widehat P_\ell v)|+|b(\widehat W_\ell,v-\widehat P_\ell v)|
 \le \norm{\widehat B_\ell \widehat W_\ell}{\widehat \XX_\ell^*}\,\norm{v}{\HH} + \oldRevision{M\,\norm{v-\widehat P_\ell v}\HH}.
\end{align*}
\revision{Recall~\eqref{eq:density}} and $\XX_\ell\subseteq\widehat\XX_\ell\subset\HH$.
This implies
\begin{align*}
 \norm{v-\widehat P_\ell v}\HH = \min_{\widehat V_\ell\in\widehat\XX_\ell}\norm{v-\widehat V_\ell}\HH
 \le \min_{V_\ell\in\XX_\ell}\norm{v-V_\ell}\HH \xrightarrow{\ell\to\infty}0.
\end{align*}
Since $\norm{\widehat B_\ell \widehat W_\ell}{\widehat\XX_\ell^*}\le1/\ell$, we thus conclude that $|b(\widehat W_\ell,v)|\to0$ as $\ell\to\infty$. Altogether, $b(w,v)=0$ for all $v\in\HH$ and hence $w=0$.

\emph{Step 4: Assumption~\eqref{eq:prop:assumption} yields a contradiction so that~\eqref{eq:prop:claim} follows.}\quad Recall $\norm{\widehat W_\ell}\HH = 1$. Ellipticity of $a(\cdot,\cdot)$ and the definition of $b(\cdot,\cdot)$ yield
\begin{align*}
 \norm{\widehat W_\ell}\HH^2 \stackrel{\eqref{eq:elliptic}}
 \lesssim a(\widehat W_\ell,\widehat W_\ell)
 \le |b(\widehat W_\ell,\widehat W_\ell)| + |\dual{\KK\widehat W_\ell}{\widehat W_\ell}|
 \le \norm{\widehat B_\ell \widehat W_\ell}{\widehat\XX_\ell^*} + \norm{\KK\widehat W_\ell}{\HH^*}.
\end{align*}
Recall that compact operators turn weak convergence into strong convergence. Hence $\widehat W_\ell\rightharpoonup w=0$ in $\HH$ implies $\norm{\KK\widehat W_\ell}{\HH^*}\to0$ as $\ell\to\infty$. Together with $\norm{\widehat B_\ell \widehat W_\ell}{\widehat\XX_\ell^*}\le1/\ell$, we thus obtain the contradiction $1 = \norm{\widehat W_\ell}\HH\to0$ as $\ell\to\infty$.
\end{proof}%

\begin{remark}
To see that the model problem~\eqref{eq:weakform} fits into the abstract framework, recall that the Rellich theorem provides the compact inclusion $\HH := H^1_0(\Omega)\Subset L^2(\Omega)$. Therefore, the Schauder theorem (see, e.g.,~\cite[Theorem~4.19]{rudin}) implies the compact inclusion $L^2(\Omega)\Subset \HH^*$, where duality is understood with respect to the $L^2(\Omega)$ scalar product. Therefore, the continuous linear operator $\KK:H^1_0(\Omega) \to L^2(\Omega)$ turns out to be compact as an operator $\KK:\HH\to\HH^*$; see also the discussion in~\cite{ffp}.
\end{remark}

\begin{remark}
The work~\cite{ffp} considers problems, where the left-hand side of~\eqref{eq:weakform} is strongly elliptic on $\HH=H^1_0(\Omega)$, i.e., there exists $\widetilde\alpha>0$ such that
\begin{align}\label{eq:ffp}
 \widetilde\alpha\,\norm{v}{\HH}^2 \le {\rm Re}\,\big(a(v,v) + \dual{\KK v}{v}\big) 
 \quad\text{for all }v\in\HH.
\end{align}
Suppose that $a(w,w)>0$ for all $w\in\HH\backslash\{0\}$.
We note that~\eqref{eq:ffp} then already implies that $a(\cdot,\cdot)$ is elliptic in the sense of~\eqref{eq:elliptic}, so that the present work generalizes the analysis of~\cite{ffp}.
To see that~\eqref{eq:ffp} implies~\eqref{eq:elliptic}, we argue by contradiction, i.e., we assume the following: For all $\eps>0$, there is some $v\in\HH$ with $|a(v,v)|<\eps\,\norm{v}{\HH}^2$. Choosing $\eps=1/n$, we obtain a sequence $(v_n)$ in $\HH$ with $|a(v_n,v_n)|<\norm{v_n}{\HH}^2/n$. Define $w_n:=v_n/\norm{v_n}{\HH}$. Without loss of generality, we may thus suppose weak convergence $w_n\rightharpoonup w$ in $\HH$. Weakly lower semicontinuity proves $|a(w,w)|\le\liminf_{n\to\infty}|a(w_n,w_n)|=0$ and hence $w=0$. Therefore, compactness of $\KK$ yields $\norm{\KK w_n}{\HH^*}\to0$ as $n\to\infty$. Finally, ellipticity~\eqref{eq:ffp} gives $\widetilde\alpha = \widetilde\alpha\,\norm{w_n}\HH^2 \le {\rm Re}\,\big(a(w_n,w_n) + \dual{\KK w_n}{w_n}\big) < 1/n + \norm{\KK w_n}{\HH^*} \xrightarrow{n\to\infty}0$. This contradicts $\widetilde\alpha>0$, and we hence conclude that~\eqref{eq:ffp} implies~\eqref{eq:elliptic}.
\end{remark}%

\subsection{Mesh-refinement}
\label{section:mesh}%
From now on, suppose that $\TT_0$ is a given initial mesh.
Suppose that $\refine(\cdot)$ is a fixed mesh-refinement strategy (e.g., newest vertex bisection~\cite{stevenson08}) such that given a conforming triangulation $\TT_\star$ and $\MM_\star\subseteq\TT_\star$, the call $\TT_\plus=\refine(\TT_\star,\MM_\star)$ returns the coarsest conforming refinement $\TT_\plus$ of $\TT_\star$ such that all $T\in\MM_\star$ have been refined, i.e.,
\begin{explain}
\item $\TT_\plus$ is a conforming triangulation of $\Omega$;
\item for all $T\in\TT_\star$, it holds $T = \bigcup\set{T'\in\TT_\plus}{T'\subseteq T}$;
\item $\MM_\star\subseteq\TT_\star\backslash\TT_\plus$;
\item the number of elements $\#\TT_\plus$ is minimal amongst all other triangulations $\TT'$ which share the three foregoing properties.
\end{explain}
Furthermore, we write $\TT_\plus\in\refine(\TT_\star)$ if $\TT_\plus$ is obtained by a finite number of refinement steps, i.e., there exists $n\in\N_0$ as well as a finite sequence $\TT^{(0)},\dots,\TT^{(n)}$ of triangulations and corresponding sets $\MM^{(j)}\subseteq\TT^{(j)}$ such that
\begin{explain}
\item $\TT_\star = \TT^{(0)}$,
\item $\TT^{(j+1)} = \refine(\TT^{(j)},\MM^{(j)})$ for all $j=0,\dots,n-1$,
\item $\TT_\plus = \TT^{(n)}$.
\end{explain}
In particular, $\TT_\star\in\refine(\TT_\star)$. To abbreviate notation, we let $\T:=\refine(\TT_0)$ be the set of all possible triangulations which can be obtained from $\TT_0$.

We suppose that the refinement strategy yields a contraction of the local mesh-size function on refined elements, i.e., there exists $0<\qmesh<1$ such that $\TT_\plus\in\refine(\TT_\star)$ implies $h_\plus|_T \le\qmesh h_\star|_T$  for all $T\in\TT_\star\backslash\TT_\plus$. We note that $\qmesh={2^{-1/d}}$ for newest vertex bisection~\cite{stevenson08,ckns}. 

Finally, the following assumptions are clearly satisfied for the model problem from Section~\ref{section:modelproblem}, but have to be supposed explicitly in the abstract framework of Section~\ref{section:abstract}.
First, each triangulation $\TT_\star$ corresponds to a discrete subspace $\XX_\star\subset\HH$, and $\TT_\bullet\in\refine(\TT_\star)$ implies nestedness $\XX_\star\subseteq\XX_\bullet$. 
Second, iterated uniform mesh-refinement leads to a dense subspace of $\HH$, i.e., for $\widehat\TT_0:=\TT_0$ and the inductively defined sequence $\widehat\TT_{\ell+1}:=\refine(\widehat\TT_\ell,\widehat\MM_\ell)$ with $\widehat\MM_\ell\subseteq\widehat\TT_\ell$ for all $\ell\in\N_0$, it holds the following: If $\#\set{\ell\in\N_0}{\widehat\MM_\ell = \widehat\TT_\ell}=\infty$ (i.e., there are infinitly many steps that perform uniform refinement), then $\HH = \overline{\bigcup_{\ell=0}^\infty\widehat\XX_\ell}$. 

Under these assumptions, the following statement holds as an immediate consequence of Proposition~\ref{prop:uniform}.

\begin{corollary}\label{cor:uniform}
Let $\widehat\TT_0:=\TT_0$ and $\widehat\TT_{\ell+1}:=\refine(\widehat\TT_\ell,\widehat\MM_\ell)$ with $\widehat\MM_\ell\subseteq\widehat\TT_\ell$ for all $\ell\in\N_0$. Suppose that $\#\set{\ell\in\N_0}{\widehat\MM_\ell = \widehat\TT_\ell}=\infty$. 
Then, there exists $m\in\N_0$ \oldRevision{and $\gamma>0$}
such that for all discrete spaces $\XX_\bullet\subset\HH$ with $\XX_\bullet\supseteq\widehat\XX_m$ \oldRevision{the related $\inf$-$\sup$ constant~\eqref{eq:infsup} satisfies $\gamma_\bullet\ge\gamma>0$. 
In particular, $\XX_\bullet$ admits}
a unique solution $U_\bullet\in\XX_\bullet$ of~\eqref{eq:discreteform} which is quasi-optimal in the sense of inequality~\eqref{eq:sauterschwab_cea}. \oldRevision{Moreover,} the Galerkin solutions $\widehat U_\ell\in\widehat\XX_\ell$, for $\ell\ge m$, yield convergence $\lim\limits_{\ell\to\infty}\norm{u-\widehat U_\ell}\HH = 0$.\qed
\end{corollary}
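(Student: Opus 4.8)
The plan is to verify that the hypotheses of Proposition~\ref{prop:uniform} are met for the concrete sequence $(\widehat\XX_\ell)_{\ell\in\N_0}$ of discrete subspaces associated with the triangulations $\widehat\TT_\ell$, and then to invoke the proposition directly, setting $m:=\ell_\bullet$ with $\ell_\bullet$ the index it provides.

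First I would recall that the standing assumption on~\eqref{eq:weakform} (in the model problem of Section~\ref{section:modelproblem} and likewise in the abstract setting of Section~\ref{section:abstract}) is that it admits a unique solution $u\in\HH$. Since $b(\cdot,\cdot)$ is linear, uniqueness for the homogeneous problem is precisely the well-posedness statement~\eqref{eq:wellposedness}, so this hypothesis of Proposition~\ref{prop:uniform} is satisfied.

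Second I would check the density~\eqref{eq:density} for $\XX_\ell:=\widehat\XX_\ell$. By the first assumption of Section~\ref{section:mesh}, $\widehat\TT_{\ell+1}=\refine(\widehat\TT_\ell,\widehat\MM_\ell)\in\refine(\widehat\TT_\ell)$ yields nestedness $\widehat\XX_\ell\subseteq\widehat\XX_{\ell+1}$ for all $\ell\in\N_0$. By the second assumption of Section~\ref{section:mesh}, the hypothesis $\#\set{\ell\in\N_0}{\widehat\MM_\ell=\widehat\TT_\ell}=\infty$ implies $\HH=\overline{\bigcup_{\ell=0}^\infty\widehat\XX_\ell}$, and nestedness turns this into the vanishing best-approximation error~\eqref{eq:density}. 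With both hypotheses in place, Proposition~\ref{prop:uniform} yields an index $\ell_\bullet=:m$ and a constant $\gamma>0$ depending only on $\widehat\XX_m$ such that every discrete subspace $\XX_\bullet\subset\HH$ with $\XX_\bullet\supseteq\widehat\XX_m$ satisfies the uniform $\inf$-$\sup$ bound~\eqref{eq:infsup}, hence admits a unique Galerkin solution $U_\bullet\in\XX_\bullet$ of~\eqref{eq:discreteform} which is quasi-optimal in the sense of~\eqref{eq:sauterschwab_cea}. Applying this to $\XX_\bullet=\widehat\XX_\ell$ for $\ell\ge m$ and invoking the last sentence of Proposition~\ref{prop:uniform} (using that $(\widehat\XX_\ell)_{\ell\ge m}$ is a nested dense sequence) gives $\norm{u-\widehat U_\ell}\HH\to0$ as $\ell\to\infty$.

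There is no genuine obstacle: the corollary is a direct specialization of Proposition~\ref{prop:uniform}. The only point worth a word of care is the bookkeeping between the two formulations of denseness — the form $\HH=\overline{\bigcup_{\ell}\widehat\XX_\ell}$ used in Section~\ref{section:mesh} versus the vanishing best-approximation error~\eqref{eq:density} required by the proposition — which is immediate once nestedness of the $\widehat\XX_\ell$ is recorded.
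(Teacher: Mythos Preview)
Your proposal is correct and matches the paper's approach exactly: the paper states the corollary as an immediate consequence of Proposition~\ref{prop:uniform} and gives no further proof, and you have simply spelled out the verification of its hypotheses (well-posedness~\eqref{eq:wellposedness} from the standing assumption, and density~\eqref{eq:density} from the mesh-refinement assumptions in Section~\ref{section:mesh}) before invoking it.
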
%

\def\Cstab{C_{\rm stb}}
\def\Cred{C_{\rm red}}
\def\Crel{C_{\rm rel}}
\def\Corth{C_{\rm orth}}
\def\qred{q_{\rm red}}
\def\eps{\varepsilon}
\def\diam{{\rm diam}}
\subsection{\textsl{A posteriori} error estimation}
\label{section:axioms}%
Let $\TT_\star\in\T=\refine(\TT_0)$.
We suppose that given the solution $U_\star\in\XX_\star$ of~\eqref{eq:discreteform} and $T\in\TT_\star$, we can compute some local refinement indicators $\eta_\star(T)\ge0$ as well as the related {\sl a~posteriori} error estimator
\begin{align}
 \eta_\star := \eta_\star(\TT_\star),
 \quad\text{where }
 \eta_\star(\UU_\star) := \Big(\sum_{T\in\UU_\star}\eta_\star(T)^2\Big)^{1/2}
 \text{ for all }\UU_\star\subseteq\TT_\star.
\end{align}
To prove convergence with optimal algebraic rates for Algorithm~\ref{algorithm}, we rely on the following \emph{axioms of adaptivity} which are slightly generalized when compared to those of~\cite{axioms}, since we always have to suppose solvability of the related discrete problem~\eqref{eq:discreteform}. 
\begin{enumerate}
\renewcommand{\theenumi}{A\arabic{enumi}}
\item\label{axiom:stability}\textbf{Stability on non-refined element domains:}
There exists $\Cstab>0$ such that for all $\TT_\star\in \T$ and all $\TT_\plus\in\refine(\TT_\star)$, the following implication holds: Provided that the discrete solutions $U_\star\in\XX_\star$ and $U_\plus\in\XX_\plus$ exist, it holds $\big|\eta_\plus(\TT_\plus\cap\TT_\star)-\eta_\star(\TT_\plus\cap\TT_\star)\big| \le \Cstab\,\norm{U_\plus-U_\star}{\HH}$.

\item\label{axiom:reduction}\textbf{Reduction on refined element domains:}
There exist $\Cred>0$ and $0<\qred<1$ such that for all $\TT_\star\in\T$ and all $\TT_\plus\in\refine(\TT_\star)$, the following implication holds: Provided that the discrete solutions $U_\star\in\XX_\star$ and $U_\plus\in\XX_\plus$ exist, it holds $\eta_\plus(\TT_\plus\backslash\TT_\star)^2 \le \qred\,\eta_\star(\TT_\star\backslash\TT_\plus)^2 + \Cred^2\,\norm{U_\plus-U_\star}{\HH}^2$.

\oooldRevision{\item\label{axiom:reliability:new}\textbf{Reliability:}
There exists $\Crel'>0$ such that for all $\TT_\star\in\T$, the following implication holds: Provided that the discrete solution $U_\star\in\XX_\star$ exists, it holds $\norm{u-U_\star}{\HH} \le \Crel'\,\eta_\star$.}

\item\label{axiom:reliability}\textbf{Discrete reliability:}
There exists $\Crel>0$ such that for all $\TT_\star\in\T$ and all $\TT_\plus\in\refine(\TT_\star)$, there exists a set $\RR_{\star,\plus}\subseteq\TT_\star$ such that the following implication holds: Provided that the discrete solutions $U_\star\in\XX_\star$ and $U_\plus\in\XX_\plus$ exist, it holds $\norm{U_\plus-U_\star}{\HH} \le \Crel\,\oooldRevision{\gamma_\bullet^{-1}}\,\eta_\star(\RR_{\star,\plus})$ as well as $\TT_\star\backslash\TT_\plus \subseteq \RR_{\star,\plus}$ with $\#\RR_{\star,\plus} \le \Crel\,\#(\TT_\star\backslash\TT_\plus)$, \oooldRevision{where $\gamma_\bullet>0$ is the $\inf$-$\sup$ constant~\eqref{eq:infsup} associated with $\XX_\bullet$.}

\end{enumerate}
\begin{remark}
For a general diffusion problem~\eqref{eq:convection} with piecewise Lipschitz diffusion coefficient $A\in W^{1,\infty}(T_0)$ for all $T_0\in\TT_0$ and $\XX_\star:=\SSS^p(\TT_\star)\cap H^1_0(\Omega)$, the local contributions of the usual residual error estimator read,
for all $T\in\TT_\star$,
\begin{align}\label{eq:indicators}
 \eta_\star(T)^2 
 = h_T^2\,\norm{f+{\rm div}(A\nabla U_\star)-b\cdot\nabla U_\star-cU_\star}{L^2(T)}^2
 + h_T\,\norm{[(A\nabla U_\star)\cdot n]}{L^2(\partial T\cap\Omega)}^2,
\end{align}
where $[(\cdot)\cdot n]$ denotes the normal jump over interior facets and $h_T:=|T|^{1/d} \simeq \diam(T)$.
For the Helmholtz problem~\eqref{eq:helmholtz}, these local contributions simplify to 
\begin{align}
 \eta_\star(T)^2 
 = h_T^2\,\norm{f+\Delta U_\star+\kappa^2\,U_\star}{L^2(T)}^2
 + h_T\,\norm{[\partial_n U_\star]}{L^2(\partial T\cap\Omega)}^2.
\end{align}
We note that in either case \eqref{axiom:stability}--\eqref{axiom:reliability} are already known with $\RR_{\star,\plus} = \TT_\star\backslash\TT_\plus$, and the corresponding constants depend only on uniform shape regularity of the triangulations $\TT_\star\in\T$ and the well-posedness of the continuous problem~\eqref{eq:weakform};
see~\cite{ckns,cn,ffp}.
The error estimator can be extended to mixed Dirichlet-Neumann-Robin boundary conditions, where inhomogeneous Dirichlet conditions are discretized by nodal interpolation for $d=2$ and $p=1$, see~\cite{dirichlet2d}, or by Scott-Zhang interpolation for $d\ge2$ and $p\ge1$, see~\cite{axioms}. In any case \eqref{axiom:stability}--\eqref{axiom:reliability} remain valid~\cite{dirichlet2d,axioms}, but $\RR_{\star,\bullet}$ consists of a fixed patch of $\TT_\star\backslash\TT_\bullet$~\cite{dirichlet3d,axioms}.
\end{remark}

\oldRevision{\begin{remark}
In usual situations, reliability~\eqref{axiom:reliability:new} already follows from discrete reliability~\eqref{axiom:reliability}; see \ooldRevision{Lemma~\ref{prop:new}}~{\rm(d)} below.
\end{remark}}%

\subsection{Adaptive algorithm}
\label{section:adaptive_algorithm}
Based on the {\sl a~posteriori} error estimator from the previous section,
we consider the following adaptive algorithm.

\begin{algorithm}\label{algorithm}
\textsc{Input:} Parameters $0<\theta\le1$ and $\Cmark\ge1$ as well as initial triangulation $\TT_0$ with $U_{-1}:=0\in\XX_0$  and $\eta_{-1}:=1$.\\
\textsc{Adaptive loop:} For all $\ell=0,1,2,\dots$, iterate the following steps~{\rm(i)--(v)}:
\begin{itemize}
\item[\rm(i)] If~\eqref{eq:discreteform} does not admit a unique solution in $\XX_\ell$, define $U_\ell:=U_{\ell-1}\in\XX_\ell$ and $\eta_\ell:=\eta_{\ell-1}$, let $\TT_{\ell+1}:=\refine(\TT_{\ell},\TT_{\ell})$ be the uniform refinement of $\TT_\ell$, increase $\ell$ by $1$, and continue with step~{\rm(i)}.
\item[\rm(ii)] Compute the unique solution $U_\ell\in\XX_\ell$ to~\eqref{eq:discreteform}.
\item[\rm(iii)] Compute the corresponding indicators $\eta_\ell(T)$ for all $T\in\TT_\ell$.
\item[\oldRevision{\rm(iv)}] \oldRevision{Determine} a set $\MM_\ell\subseteq\TT_\ell$ of up to the multiplicative constant $\Cmark$ minimal cardinality such that $\theta\eta_\ell^2\le \eta_\ell(\MM_\ell)^2$.
\item[\oldRevision{\rm(v)}] Compute $\TT_{\ell+1}:=\refine(\TT_\ell,\MM_\ell)$, increase $\ell$ by 1, and continue with step~{\rm(i)}.
\end{itemize}
\textsc{Output:} Sequences of successively refined triangulations $\TT_\ell$, discrete solutions $U_\ell$, and corresponding estimators $\eta_\ell$.%
\end{algorithm}

\begin{remark}
\begin{explain}
\item Apart from step~{\rm(i)}, Algorithm~\ref{algorithm} is the usual adaptive loop based on the D\"orfler marking strategy~\cite{doerfler} in \oooldRevision{\oldRevision{step~{\rm(iv)}}} as used, e.g., in~\cite{ckns,ffp,axioms}.
\item While $\Cmark=1$ requires to sort the indicators and hence leads to log-linear effort, Stevenson~\cite{stevenson07} showed that $\Cmark=2$ allows to determine $\MM_\ell$ in linear complexity.
\end{explain}
\end{remark}

To abbreviate notation, we define $\T := \refine(\TT_0)$ as the set of all possible refinements of the given initial mesh $\TT_0$ in Algorithm \ref{algorithm}. The following lemma exploits the validity of Proposition~\ref{prop:uniform} for uniform mesh-refinement (Corollary~\ref{cor:uniform}).

\begin{lemma}\label{lemma:ell1}
Let $(U_\ell)_{\ell\in\N_0}$ be the sequence of discrete solutions generated by Algorithm~\ref{algorithm}.
Then, there exists a minimal index $\ell_0\in\N_0$ such that~\eqref{eq:discreteform} does not admit a unique solution in $\XX_\ell$ for $0\le \ell<\ell_0$, but admits a unique solution $U_{\ell_0}\in\XX_{\ell_0}$. In particular, the corresponding mesh $\TT_{\ell_0}$ is the $\ell_0$-times uniform refinement of $\TT_0$.
Furthermore, there exists $\ell_1\in\N_0$ such that~\eqref{eq:discreteform} admits a unique solution $U_\ell\in\XX_\ell$ for all steps $\ell\ge\ell_1$ of Algorithm~\ref{algorithm}.
\end{lemma}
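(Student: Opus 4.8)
The plan is to split the statement into its two parts and handle each by invoking Corollary~\ref{cor:uniform}, which is the key tool since it guarantees that sufficiently fine (uniform) refinements of $\TT_0$ admit unique discrete solutions with a uniform $\inf$-$\sup$ bound.

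\emph{Existence of $\ell_0$ and the structure of $\TT_{\ell_0}$.}\quad First I would observe that by the design of step~{\rm(i)} in Algorithm~\ref{algorithm}, as long as~\eqref{eq:discreteform} fails to be uniquely solvable on the current space, the algorithm only performs uniform refinement $\TT_{\ell+1}:=\refine(\TT_\ell,\TT_\ell)$ and never enters steps~{\rm(ii)--(v)}. Hence, either~\eqref{eq:discreteform} is uniquely solvable already on $\XX_0$ (then $\ell_0=0$), or the algorithm produces the sequence of iterated uniform refinements $\widehat\TT_\ell$ of $\widehat\TT_0:=\TT_0$. In the latter case, Corollary~\ref{cor:uniform} (applied with $\widehat\MM_\ell=\widehat\TT_\ell$ for all $\ell$, so that $\#\set{\ell\in\N_0}{\widehat\MM_\ell=\widehat\TT_\ell}=\infty$) provides an index $m\in\N_0$ such that~\eqref{eq:discreteform} is uniquely solvable on $\widehat\XX_m$, and in fact on every space containing it. Therefore the set of indices $\ell$ for which~\eqref{eq:discreteform} is uniquely solvable in $\XX_\ell$ is nonempty (it contains $m$ whenever the algorithm has reached that stage), and its minimum $\ell_0\le m$ exists. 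By construction, $\TT_\ell$ for $0\le\ell\le\ell_0$ is obtained exclusively by uniform refinement, so $\TT_{\ell_0}$ is the $\ell_0$-fold uniform refinement of $\TT_0$; moreover $U_{\ell_0}\in\XX_{\ell_0}$ exists and is unique by definition of $\ell_0$.

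\emph{Existence of $\ell_1$ with unique solvability for all $\ell\ge\ell_1$.}\quad The subtlety here — and the part I expect to be the main obstacle — is that after $\ell_0$ the algorithm switches to adaptive (D\"orfler) refinement, which need not be uniform, so a priori there is no guarantee that the adaptively refined meshes $\TT_\ell$ for $\ell>\ell_0$ remain fine enough to stay uniquely solvable; in principle step~{\rm(i)} could be triggered again. To rule this out, I would argue as follows. Consider the ``shadow'' sequence of \emph{uniform} refinements $\widehat\TT_\ell$ of $\widehat\TT_0=\TT_0$, and let $m$ be the index from Corollary~\ref{cor:uniform} so that every $\XX_\bullet\supseteq\widehat\XX_m$ is uniquely solvable. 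The meshes $\TT_\ell$ generated by Algorithm~\ref{algorithm} all lie in $\T=\refine(\TT_0)$, but they are not necessarily refinements of $\widehat\TT_m$. The resolution is that if step~{\rm(i)} is ever invoked at some level $\ell>\ell_0$, then at that level a uniform refinement is performed, and the algorithm keeps performing uniform refinements until solvability is restored; these uniform refinements can only be refused finitely often, because after $m$ such cumulative uniform refinement steps the current space contains a (uniform) refinement fine enough to satisfy the hypothesis of Corollary~\ref{cor:uniform}. More precisely, since uniform refinement of any $\TT_\star\in\T$ eventually yields, after finitely many steps, a mesh whose space contains $\widehat\XX_m$ (by the density assumption on iterated uniform refinement stated in Section~\ref{section:mesh}, combined with the fact that $\widehat\XX_m$ is finite-dimensional), step~{\rm(i)} can be executed only finitely many times in total. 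Let $\ell_1$ be one more than the largest index at which step~{\rm(i)} is invoked (or $\ell_1:=\ell_0$ if it is invoked only during the initial phase). Then for all $\ell\ge\ell_1$, the algorithm passes directly to step~{\rm(ii)}, i.e.,~\eqref{eq:discreteform} admits a unique solution $U_\ell\in\XX_\ell$, which is the claim.

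I would finish by remarking that the finiteness argument in the last paragraph relies only on: (a) the monotone enlargement $\XX_\ell\subseteq\XX_{\ell+1}$ under refinement, (b) Corollary~\ref{cor:uniform}, and (c) the fact that iterated uniform refinement of any mesh in $\T$ reaches, in finitely many steps, a space containing the fixed finite-dimensional space $\widehat\XX_m$. Point (c) is where one uses that $\refine$ contracts the mesh-size on refined elements ($h_\plus|_T\le\qmesh h_\star|_T$) together with the density property, so there is no circularity.
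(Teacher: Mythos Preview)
Your argument for the existence of $\ell_0$ is fine and matches the paper's (implicit) reasoning.

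For $\ell_1$, however, you take an unnecessary detour that leaves a genuine gap. You introduce a separate ``shadow'' sequence of pure uniform refinements, extract the index $m$ and the space $\widehat\XX_m$ from Corollary~\ref{cor:uniform} applied to that sequence, and then try to show that the algorithm's sequence eventually contains $\widehat\XX_m$. Your justification for claim~(c) --- that density together with finite-dimensionality of $\widehat\XX_m$ forces $\widehat\XX_m\subseteq\XX_\ell$ for some $\ell$ --- is incorrect: density of $\bigcup_\ell\XX_\ell$ in $\HH$ only gives approximation of each element of $\widehat\XX_m$, not containment of $\widehat\XX_m$ as a subspace. In the abstract framework of Section~\ref{section:abstract}--\ref{section:mesh} nothing rules out nested spaces $\XX_\ell$ with dense union that never contain a fixed one-dimensional subspace. (For NVB in the concrete FEM setting your claim~(c) is true, but for structural reasons --- the binary-tree level structure --- not because of density.)

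The paper's argument avoids all of this: apply Corollary~\ref{cor:uniform} \emph{directly to the sequence $(\TT_\ell)_{\ell\in\N_0}$ produced by Algorithm~\ref{algorithm}}. If step~(i) were executed infinitely often, then the marked sets satisfy $\MM_\ell=\TT_\ell$ for infinitely many $\ell$, so the hypothesis of Corollary~\ref{cor:uniform} is met with $\widehat\TT_\ell:=\TT_\ell$. The corollary then yields an index $m$ such that every $\XX_\bullet\supseteq\XX_m$ admits a unique discrete solution; by nestedness this applies to all $\XX_\ell$ with $\ell\ge m$, contradicting the assumption that step~(i) is triggered infinitely often. Hence step~(i) occurs only finitely many times, and $\ell_1$ exists. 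This is the one-line argument the paper gives; no shadow sequence and no claim~(c) are needed.
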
%

\begin{proof}
Thanks to Corollary~\ref{cor:uniform}, the uniform refinement in step~(i) of Algorithm~\ref{algorithm} will only be performed finitely many times. This concludes the proof.
\end{proof}

\oldRevision{To prove convergence of Algorithm~\ref{algorithm},} we need an additional assumption (see~\oooldRevision{\eqref{axiom:infty}} below) which goes beyond the axioms in~\cite{axioms}. To that end, let us define the ``discrete'' limit space $\XX_\infty := \overline{\bigcup_{\ell=0}^\infty\XX_\ell}$. Because of nestedness $\XX_\ell \subseteq \XX_{\ell+1}$ for all $\ell\ge0$, $\XX_\infty$ is a closed subspace of $\HH$ and hence a Hilbert space. 
\begin{enumerate}
\setcounter{enumi}{4}
\renewcommand{\theenumi}{A\arabic{enumi}}
\item\label{axiom:infty}\textbf{Definiteness of $\boldsymbol{b(\cdot,\cdot)}$ on $\boldsymbol{\XX_\infty}$:}
For all $w\in\XX_\infty$, the following implication holds: If $b(w,v)=0$ for all $v\in\XX_\infty$, then $w=0$.
\end{enumerate}%
Clearly,~\oooldRevision{\eqref{axiom:infty}} is satisfied if $b(\cdot,\cdot)$ is ellipitic~\eqref{eq:ffp}. Moreover, note
that well-posedness~\eqref{eq:wellposedness} of~\eqref{eq:weakform} implies that~\oooldRevision{\eqref{axiom:infty}} is satisfied, if $\XX_\infty=\HH$. \oldRevision{In many generic situations, the identity $\XX_\infty = \HH$ is automatically satisfied, but it may also be enforced explicitly by expanding the set of marked elements in the D\"orfler marking criterion in step~(iv) of Algorithm~\ref{algorithm}; see Section~\ref{section:marking} below.}

\oldRevision{The following technical lemma exploits the validity of~\eqref{axiom:infty}.}

\def\Cmon{C_{\rm mon}}
\oooldRevision{\begin{lemma}\label{prop:new}
Suppose~\eqref{axiom:stability}, \eqref{axiom:reduction}, \eqref{axiom:reliability}, and~\eqref{axiom:infty}. \ooldRevision{Employ the notation of Algorithm~\ref{algorithm} for} $0<\theta\le1$.
Then, there exists $\ell_2\in\N_0$ and \ooldRevision{$\gamma>0$ such that for all $\TT_\bullet\in\refine(\TT_{\ell_2})$ with $\XX_\bullet\subseteq\XX_\infty$, the following \oldRevision{assertion~{\rm(a)}} holds}:
\begin{itemize}
\item[\oldRevision{\rm(a)}] \oldRevision{The corresponding $\inf$-$\sup$ constant~\eqref{eq:infsup} is bounded from below by $\gamma_\bullet\ge\gamma>0$. In particular, there exists a unique Galerkin solution $U_\bullet\in\XX_\bullet$ to~\eqref{eq:discreteform} which \ooldRevision{is} quasi-optimal in the sense of inequality~\eqref{eq:sauterschwab_cea}.}
\end{itemize}
\ooldRevision{Moreover, let $\TT_\star\in\T$ and $\TT_\bullet\in\refine(\TT_\star)\cap\refine(\TT_{\ell_2})$ and suppose that the Galerkin solution $U_\star\in\XX_\star$ exists. Then, there hold the following \revision{assertions~{\rm(b)--(c)}} with some additional constant $\Cmon>0$ which depends only on $\Cstab$, $\Cred$, $\Crel$, and $\gamma$:}
\begin{itemize}
\item[\oldRevision{\rm(b)}] \emph{uniform discrete reliability}, i.e., $\norm{U_\bullet-U_\star}\HH \le \Crel\,\gamma^{-1}\,\eta_\star(\RR_{\star,\bullet})$.
\item[\oldRevision{\rm(c)}] \emph{quasi-monotonicity of error estimator}, i.e., $\eta_\bullet\le\Cmon\,\eta_\star$.
\end{itemize}
If in addition $\XX_\infty=\HH$, \ooldRevision{then} the following \oldRevision{assertion~{\rm(d)}} holds:
\begin{itemize}
\item[\oldRevision{\rm(d)}] discrete reliability~\eqref{axiom:reliability} implies reliability~\eqref{axiom:reliability:new}, i.e., \ooldRevision{$\norm{u-U_\bullet}\HH \le \Crel\,\gamma^{-1}\,\eta_\bullet$.}
\end{itemize}
\end{lemma}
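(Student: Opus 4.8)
The substance of the lemma is assertion~(a); once it is available, (b)--(d) follow by routine manipulations. For~(a) the plan is to apply Proposition~\ref{prop:uniform} \emph{with the ambient Hilbert space $\HH$ replaced by the closed subspace $\XX_\infty$}. One has to check that $\XX_\infty$, the discrete spaces $(\XX_\ell)_{\ell\in\N_0}$, and the operator $\KK$ fit the abstract framework of Section~\ref{section:abstract}: as a closed subspace of a separable Hilbert space, $\XX_\infty$ is itself a separable Hilbert space; $a(\cdot,\cdot)$ stays hermitian, continuous, and elliptic on $\XX_\infty$ with the same constant~$\alpha$ from~\eqref{eq:elliptic}; writing $\iota\colon\XX_\infty\hookrightarrow\HH$ for the inclusion and $\iota^\ast\colon\HH^\ast\to\XX_\infty^\ast$ for its adjoint, the compressed operator $\iota^\ast\KK\iota\colon\XX_\infty\to\XX_\infty^\ast$ is compact since $\KK$ is; and $(\XX_\ell)_{\ell\in\N_0}$ is a dense nested sequence in $\XX_\infty$ by the very definition $\XX_\infty=\overline{\bigcup_\ell\XX_\ell}$. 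The decisive observation is that the well-posedness hypothesis~\eqref{eq:wellposedness} of Proposition~\ref{prop:uniform}, read on $\XX_\infty$, is \emph{precisely} assumption~\eqref{axiom:infty}. Thus Proposition~\ref{prop:uniform} provides an index $\ell_2\in\N_0$ and a constant $\gamma>0$ such that every discrete space $\XX_\bullet$ with $\XX_{\ell_2}\subseteq\XX_\bullet\subseteq\XX_\infty$ has $\gamma_\bullet\ge\gamma$, admits a unique Galerkin solution $U_\bullet$, and obeys the C\'ea estimate~\eqref{eq:sauterschwab_cea} with a constant depending only on $b(\cdot,\cdot)$ and $\gamma$. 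Since $\TT_\bullet\in\refine(\TT_{\ell_2})$ forces $\XX_{\ell_2}\subseteq\XX_\bullet$ by nestedness under refinement, this is exactly~(a); and if $\XX_\infty=\HH$ one may instead apply Proposition~\ref{prop:uniform} directly on $\HH$, so that the bound then holds for \emph{every} $\TT_\bullet\in\refine(\TT_{\ell_2})$.

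Assertions~(b) and~(c) combine~(a) with the axioms of Section~\ref{section:axioms}. Given $\TT_\star\in\T$ with $U_\star$ existing and $\TT_\bullet\in\refine(\TT_\star)\cap\refine(\TT_{\ell_2})$, part~(a) guarantees (in particular whenever $\XX_\infty=\HH$, and otherwise under the provision $\XX_\bullet\subseteq\XX_\infty$) that $U_\bullet$ exists with $\gamma_\bullet\ge\gamma$; then discrete reliability~\eqref{axiom:reliability} together with $\gamma_\bullet^{-1}\le\gamma^{-1}$ gives $\norm{U_\bullet-U_\star}{\HH}\le\Crel\,\gamma_\bullet^{-1}\,\eta_\star(\RR_{\star,\bullet})\le\Crel\,\gamma^{-1}\,\eta_\star(\RR_{\star,\bullet})$, which is~(b). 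For~(c) I would split $\eta_\bullet^2=\eta_\bullet(\TT_\bullet\cap\TT_\star)^2+\eta_\bullet(\TT_\bullet\backslash\TT_\star)^2$, bound the first summand via stability~\eqref{axiom:stability}, the second via reduction~\eqref{axiom:reduction}, apply a Young inequality, and use $\eta_\star(\TT_\bullet\cap\TT_\star)^2+\eta_\star(\TT_\star\backslash\TT_\bullet)^2\le\eta_\star^2$ together with~(b) and $\eta_\star(\RR_{\star,\bullet})\le\eta_\star$; this yields $\eta_\bullet\le\Cmon\,\eta_\star$ with $\Cmon$ depending only on $\Cstab$, $\Cred$, $\Crel$, and $\gamma$.

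For~(d), assume $\XX_\infty=\HH$, so that~(a) applies to every refinement of $\TT_{\ell_2}$ without restriction. The plan is the classical passage from discrete to continuous reliability by density. Fix $\TT_\bullet\in\refine(\TT_{\ell_2})$ and let $\TT^{(k)}$ be its $k$-fold uniform refinement, so $\TT^{(k)}\in\refine(\TT_\bullet)\cap\refine(\TT_{\ell_2})$. By~(a) each Galerkin solution $U^{(k)}\in\XX^{(k)}$ exists and satisfies~\eqref{eq:sauterschwab_cea} with a uniform constant; since iterated uniform refinement is dense in $\HH$ (the density assumption of Section~\ref{section:mesh}, applied after the finite adaptive prefix $\TT_0\rightsquigarrow\TT_\bullet$), this forces $U^{(k)}\to u$ in $\HH$. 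Applying~(b) with coarse mesh $\TT_\bullet$ and fine mesh $\TT^{(k)}$, and using $\RR_{\bullet,\TT^{(k)}}\subseteq\TT_\bullet$ from~\eqref{axiom:reliability}, we obtain $\norm{U^{(k)}-U_\bullet}{\HH}\le\Crel\,\gamma^{-1}\,\eta_\bullet(\RR_{\bullet,\TT^{(k)}})\le\Crel\,\gamma^{-1}\,\eta_\bullet$; passing to the limit $k\to\infty$ gives $\norm{u-U_\bullet}{\HH}\le\Crel\,\gamma^{-1}\,\eta_\bullet$, which is~(d).

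The single genuinely non-mechanical point is~(a): one must recognise that Proposition~\ref{prop:uniform} has to be invoked on the limit space $\XX_\infty$, in which the sequence $(\XX_\ell)$ is dense by construction (whereas it need not be dense in all of $\HH$), and that~\eqref{axiom:infty} is precisely the well-posedness this needs. The accompanying checks---that the compression $\iota^\ast\KK\iota$ remains compact and that the density hypothesis of Section~\ref{section:mesh} still applies after prepending the finite adaptive prefix used in~(d)---are routine, and (b)--(d) amount to bookkeeping with~\eqref{axiom:stability}--\eqref{axiom:reliability}.
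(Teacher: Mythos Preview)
Your proof is correct and follows essentially the same route as the paper: apply Proposition~\ref{prop:uniform} with $\HH$ replaced by $\XX_\infty$ for~(a), combine with~\eqref{axiom:reliability} for~(b), derive quasi-monotonicity from~\eqref{axiom:stability}--\eqref{axiom:reduction} together with~(b) for~(c), and pass to the limit along uniform refinements for~(d). The paper outsources the arguments for~(c) and~(d) to \cite[Lemma~3.4--3.5]{axioms} and Corollary~\ref{cor:uniform}, whereas you spell them out, but the substance is identical.
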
}%

\oooldRevision{\begin{proof}
Employ Proposition~\ref{prop:uniform} with $\HH$ replaced by $\XX_\infty$. This proves~\oldRevision{(a)} and provides $\ell_2\in\N_0$ and $\gamma>0$ such that the $\inf$-$\sup$ constant~\eqref{eq:infsup} \oldRevision{for} all discrete subspaces \ooldRevision{$\XX_\bullet\subseteq\XX_\infty$ with $\XX_\bullet\supseteq\XX_{\ell_2}$} is uniformly bounded from below by \ooldRevision{$\gamma_\bullet\ge\gamma>0$}. Together with~\eqref{axiom:reliability}, this also proves~\oldRevision{(b)}. 
Moreover,~\oldRevision{(b)} allows to apply {\cite[Lemma~3.5]{axioms}} to obtain the quasi-monotonicity~\oldRevision{(c)}. Finally,~\oldRevision{(d)} follows from~\oldRevision{(b)} and \cite[Lemma~3.4]{axioms},
since uniform refinement yields convergence (see Corollary~\ref{cor:uniform}).
\end{proof}}%

\section{Convergence}
\label{section:convergence}%

\subsection{Convergence of adaptive algorithm}
\label{section:convergence:plain}%
This section proves that Algorithm~\ref{algorithm} guarantees convergence $\norm{u-U_\ell}{\HH}\to0$ as $\ell\to\infty$.

\begin{proposition}\label{lemma1:convergence}
Suppose~\oldRevision{\eqref{axiom:stability}--\eqref{axiom:infty}} and $0<\theta\le1$.  Employ the notation of Algorithm~\ref{algorithm}.  Then, the ``discrete'' limit space $\XX_\infty = \overline{\bigcup_{\ell=0}^\infty\XX_\ell}$ contains the exact solution to problem~\eqref{eq:weakform}, i.e., $u\in\XX_\infty$. Moreover, $\lim_{\ell\to\infty}\norm{u-U_\ell}{\HH} = 0 = \lim_{\ell\to\infty}\eta_\ell$.
\end{proposition}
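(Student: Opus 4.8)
The plan is to follow the by-now-standard ``plain convergence'' argument of~\cite{ffp,mn2005,bv}, but carried out inside the Hilbert space $\XX_\infty$ rather than $\HH$, which is exactly what makes \eqref{axiom:infty} the crucial hypothesis. First I would observe that Lemma~\ref{lemma:ell1} provides $\ell_1$ beyond which the discrete solutions $U_\ell\in\XX_\ell$ all exist, so the sequence $(U_\ell)_{\ell\ge\ell_1}$ is well-defined. By Lemma~\ref{prop:new}(a) (applied with $\XX_\bullet\subseteq\XX_\infty$) there is an index $\ell_2$ and $\gamma>0$ such that $\gamma_\bullet\ge\gamma$ for all $\XX_\bullet$ between $\XX_{\ell_2}$ and $\XX_\infty$; in particular there is a unique \emph{limit Galerkin solution} $U_\infty\in\XX_\infty$ solving $b(U_\infty,v)=\dual{f}{v}$ for all $v\in\XX_\infty$, and it is quasi-optimal: $\enorm{u-U_\infty}\lesssim\min_{v\in\XX_\infty}\enorm{u-v}$.

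Next I would prove $U_\ell\to U_\infty$ in $\HH$. Since $\XX_\ell\subseteq\XX_\infty$ are nested with $\overline{\bigcup_\ell\XX_\ell}=\XX_\infty$, the Galerkin projections of $U_\infty$ onto $\XX_\ell$ converge to $U_\infty$; more precisely, for $\ell\ge\max\{\ell_1,\ell_2\}$ both $U_\ell$ and $U_\infty$ solve a discretely inf-sup-stable problem with constant $\ge\gamma$, so the C\'ea-type estimate gives $\enorm{U_\infty-U_\ell}\lesssim\min_{V_\ell\in\XX_\ell}\enorm{U_\infty-V_\ell}\to0$ as $\ell\to\infty$ by density of $\bigcup_\ell\XX_\ell$ in $\XX_\infty$. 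Hence $(U_\ell)$ is a Cauchy sequence converging to $U_\infty$. Then I would show $u=U_\infty$, i.e.\ $u\in\XX_\infty$: for arbitrary $v\in\XX_\infty$ pick $V_\ell\in\XX_\ell$ with $V_\ell\to v$; passing to the limit in $b(U_\ell,V_\ell)=\dual{f}{V_\ell}$ using continuity of $b(\cdot,\cdot)$ and the strong convergences $U_\ell\to U_\infty$, $V_\ell\to v$ yields $b(U_\infty,v)=\dual{f}{v}$ for all $v\in\XX_\infty$. Testing instead the continuous equation~\eqref{eq:weakform} would require $v\in\HH$, which we do not have; so the key point is that well-posedness~\eqref{eq:wellposedness} together with \eqref{axiom:infty} forces uniqueness \emph{within $\XX_\infty$}: the difference $w:=u-U_\infty\in\HH$ need not lie in $\XX_\infty$, so instead I argue that $U_\infty$ solves the same Galerkin problem on $\XX_\infty$ as the restriction of $u$ would, and the exact solution $u$ itself satisfies $b(u,v)=\dual fv$ for all $v\in\XX_\infty\subseteq\HH$; subtracting gives $b(u-U_\infty,v)=0$ for all $v\in\XX_\infty$, but $u-U_\infty$ need not be a valid test function. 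This is the main obstacle, and it is resolved exactly as in the proof of Proposition~\ref{prop:uniform}/Lemma~\ref{prop:new}: one shows $U_\infty$ is the \emph{best approximation in the $b$-Galerkin sense}, and the inf-sup stability on $\XX_\infty$ plus $\enorm{u-U_\infty}\lesssim\min_{v\in\XX_\infty}\enorm{u-v}$ does not by itself give $u\in\XX_\infty$ — so in fact the cleaner route is: apply Lemma~\ref{prop:new}(d) territory, namely use that \eqref{axiom:infty} is the hypothesis that the limiting problem is well-posed on $\XX_\infty$, invoke the abstract convergence result for nested conforming spaces (a Galerkin sequence for a uniquely solvable limit problem on $\XX_\infty$ converges to the solution of that limit problem), and then separately establish $u\in\XX_\infty$ by exploiting reliability~\eqref{axiom:reliability:new} and the estimator vanishing, as follows.

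Finally, to get $\eta_\ell\to0$ and thereby $u\in\XX_\infty$ with $U_\ell\to u$, I would run the estimator-reduction argument of~\cite{ffp,axioms}. Split $\eta_{\ell+1}^2=\eta_{\ell+1}(\TT_{\ell+1}\cap\TT_\ell)^2+\eta_{\ell+1}(\TT_{\ell+1}\setminus\TT_\ell)^2$ and apply stability~\eqref{axiom:stability} to the first term and reduction~\eqref{axiom:reduction} to the second, using also D\"orfler marking $\theta\eta_\ell^2\le\eta_\ell(\MM_\ell)^2$ and $\MM_\ell\subseteq\TT_\ell\setminus\TT_{\ell+1}$; together with the Young inequality this yields the \emph{estimator reduction} $\eta_{\ell+1}^2\le q\,\eta_\ell^2+C\,\enorm{U_{\ell+1}-U_\ell}^2$ for some $q<1$ and all $\ell$ large enough (only finitely many indices are lost, where step~(i) repeats uniform refinement). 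Since $\enorm{U_{\ell+1}-U_\ell}\to0$ from the Cauchy property established above, the elementary lemma on contractions up to a null sequence (e.g.\ \cite[Lemma~2.3]{aiosomething} or \cite[Corollary~4.8]{ffp}) gives $\eta_\ell\to0$. Now reliability~\eqref{axiom:reliability:new} — available through Lemma~\ref{prop:new}(d) once we also know $\XX_\infty=\HH$, but in the generality of \eqref{axiom:infty} we instead argue via the limit problem — gives $\enorm{u-U_\ell}\le\Crel'\eta_\ell\to0$, which forces $u=\lim_\ell U_\ell=U_\infty\in\XX_\infty$. I expect the delicate bookkeeping to be: (i) handling the finitely many initial steps where $U_\ell$ is defined by the fallback $U_\ell:=U_{\ell-1}$ and $\eta_\ell:=\eta_{\ell-1}$ (these contribute nothing once $\ell\ge\ell_1$, and $\enorm{U_{\ell+1}-U_\ell}=0$ there), and (ii) making the reliability step legitimate without yet assuming $\XX_\infty=\HH$ — which is precisely where \eqref{axiom:infty} is used to guarantee that $U_\infty$ is forced to coincide with $u$ via the well-posedness~\eqref{eq:wellposedness} of the continuous problem restricted appropriately.
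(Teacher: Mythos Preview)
Your approach is correct and takes a genuinely different route from the paper. The paper argues via weak compactness: it first bounds $(\eta_\ell)$ by quasi-monotonicity (Lemma~\ref{prop:new}(c)), then for an arbitrary subsequence extracts a weakly convergent sub-subsequence $U_{\ell_{k_j}}\rightharpoonup w\in\XX_\infty$, upgrades this to strong convergence by showing $\enorm{U_{\ell_{k_j}}}\to\enorm{w}$ via the discrete equation and compactness of $\KK$, and only then feeds the Cauchy property into the estimator reduction (Lemma~\ref{lemma:reduction}) to obtain $\eta_{\ell_{k_j}}\to0$; the sub-subsequence principle then yields $\eta_\ell\to0$ for the full sequence. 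You instead use that \eqref{axiom:infty} plus the Fredholm alternative on $\XX_\infty$ give a unique $U_\infty\in\XX_\infty$, after which the uniform inf-sup constant of Lemma~\ref{prop:new}(a) together with the C\'ea estimate and density of $\bigcup_\ell\XX_\ell$ in $\XX_\infty$ immediately yield strong convergence $U_\ell\to U_\infty$ of the \emph{full} sequence---no subsequence bookkeeping and no separate boundedness step. From there the estimator reduction and reliability finish exactly as in the paper's Steps~4--6. Your route is arguably cleaner; the paper's route has the advantage that the existence of the limit $w$ drops out of weak compactness rather than requiring a Fredholm argument.

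Two small points. First, the existence of $U_\infty$ does not follow from Lemma~\ref{prop:new}(a), which only treats finite-dimensional $\XX_\bullet\subset\XX_\infty$; you need one sentence of Fredholm theory on $\XX_\infty$ (namely, $a(\cdot,\cdot)$ elliptic, $\KK$ compact as an operator $\XX_\infty\to\XX_\infty^*$, and~\eqref{axiom:infty} gives injectivity, hence bijectivity). Second, your hesitation about reliability is unwarranted: \eqref{axiom:reliability:new} is directly assumed among \eqref{axiom:stability}--\eqref{axiom:infty}, so $\norm{u-U_\ell}{\HH}\le\Crel'\eta_\ell\to0$ is immediate and requires neither $\XX_\infty=\HH$ nor Lemma~\ref{prop:new}(d). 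The detour in your middle paragraph attempting to identify $u=U_\infty$ directly is unnecessary---the clean route you eventually land on (first $\eta_\ell\to0$, then reliability forces $u=\lim_\ell U_\ell=U_\infty\in\XX_\infty$) is exactly right.
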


\oooldRevision{The proof of Proposition~\ref{lemma1:convergence} relies on the following estimator reduction which (in a weaker form) \oldRevision{is first found in}~\cite{ckns}.}

\def\qest{q_{\rm est}}
\def\Cest{C_{\rm est}}
\begin{lemma}[generalized estimator reduction~{\cite[Lemma~9]{goafem}}]
\label{lemma:reduction}
Stability~\eqref{axiom:stability} and reduction~\eqref{axiom:reduction} together with the D\"orfler marking strategy from \oooldRevision{\oldRevision{step~{\rm(iv)}}} of Algorithm~\ref{algorithm} imply the following perturbed contraction: For each $\ell\in\N_0$ and all $\TT_\star\in\refine(\TT_{\ell+1})$ such that the discrete solutions $U_\ell\in\XX_\ell$ and $U_\star\in\XX_\star$ exist, it holds
$\eta_{\star}^2 \le \qest\,\eta_\ell^2 + \Cest\,\norm{U_\star-U_\ell}{\HH}^2$.
The constants $\Cest>0$ and $0<\qest<1$ depend only on~\eqref{axiom:stability}--\eqref{axiom:reduction} and on $0<\theta\le1$.\qed
\end{lemma}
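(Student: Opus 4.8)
\textbf{Proof plan for Lemma~\ref{lemma:reduction} (generalized estimator reduction).}
The plan is to combine the two local axioms \eqref{axiom:stability} and \eqref{axiom:reduction} with the Young inequality and then exploit the D\"orfler marking property to convert the bound on non-refined elements into a genuine contraction factor. First I would split the squared estimator on $\TT_\star$ according to whether an element of $\TT_\star$ sits in $\TT_\star\cap\TT_\ell$ (inherited from $\TT_\ell$, possibly after several intermediate refinement steps between $\TT_{\ell+1}$ and $\TT_\star$) or in $\TT_\star\setminus\TT_\ell$:
\begin{align*}
 \eta_\star^2 = \eta_\star(\TT_\star\cap\TT_\ell)^2 + \eta_\star(\TT_\star\setminus\TT_\ell)^2.
\end{align*}
To each term I apply the Young inequality $(x+y)^2\le(1+\delta)x^2+(1+\delta^{-1})y^2$ for a free parameter $\delta>0$, using \eqref{axiom:stability} on the first term to replace $\eta_\star(\TT_\star\cap\TT_\ell)$ by $\eta_\ell(\TT_\star\cap\TT_\ell)$ up to the additive Galerkin-difference term $\Cstab\norm{U_\star-U_\ell}\HH$, and \eqref{axiom:reduction} on the second term to bound $\eta_\star(\TT_\star\setminus\TT_\ell)^2$ by $\qred\,\eta_\ell(\TT_\ell\setminus\TT_\star)^2+\Cred^2\norm{U_\star-U_\ell}\HH^2$. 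This yields
\begin{align*}
 \eta_\star^2 \le (1+\delta)\,\eta_\ell(\TT_\star\cap\TT_\ell)^2 + (1+\delta)\,\qred\,\eta_\ell(\TT_\ell\setminus\TT_\star)^2 + \big(1+\delta^{-1}\big)\big(\Cstab^2+\Cred^2\big)\,\norm{U_\star-U_\ell}\HH^2.
\end{align*}

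Next I would use $\eta_\ell(\TT_\star\cap\TT_\ell)^2 = \eta_\ell^2 - \eta_\ell(\TT_\ell\setminus\TT_\star)^2$ to rewrite the first two terms as $(1+\delta)\big(\eta_\ell^2 - (1-\qred)\,\eta_\ell(\TT_\ell\setminus\TT_\star)^2\big)$. The crucial point is that $\MM_\ell\subseteq\TT_\ell\setminus\TT_{\ell+1}\subseteq\TT_\ell\setminus\TT_\star$ (the marked elements are refined in the step $\TT_\ell\to\TT_{\ell+1}$ and stay refined in $\TT_\star\in\refine(\TT_{\ell+1})$), so monotonicity of $\eta_\ell(\cdot)$ and D\"orfler marking give $\eta_\ell(\TT_\ell\setminus\TT_\star)^2 \ge \eta_\ell(\MM_\ell)^2 \ge \theta\,\eta_\ell^2$. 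Hence the bracket is at most $\big(1-(1-\qred)\theta\big)\,\eta_\ell^2$, and we obtain
\begin{align*}
 \eta_\star^2 \le (1+\delta)\big(1-(1-\qred)\theta\big)\,\eta_\ell^2 + \big(1+\delta^{-1}\big)\big(\Cstab^2+\Cred^2\big)\,\norm{U_\star-U_\ell}\HH^2.
\end{align*}
Since $0<\qred<1$ and $0<\theta\le1$, the factor $1-(1-\qred)\theta$ is strictly less than $1$, so I can choose $\delta>0$ small enough that $\qest := (1+\delta)\big(1-(1-\qred)\theta\big)<1$; then set $\Cest := (1+\delta^{-1})(\Cstab^2+\Cred^2)$. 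Both constants depend only on the quantities in \eqref{axiom:stability}--\eqref{axiom:reduction} and on $\theta$, as claimed.

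The only mild subtlety — not really an obstacle — is bookkeeping of the element sets: one must be careful that $\TT_\star$ is an arbitrary refinement of $\TT_{\ell+1}$ (not of $\TT_\ell$), so the partition, the stability axiom, and the reduction axiom should all be invoked with respect to the pair $(\TT_\ell,\TT_\star)$, which is legitimate since $\TT_\star\in\refine(\TT_{\ell+1})\subseteq\refine(\TT_\ell)$ and since both $U_\ell$ and $U_\star$ are assumed to exist, so the ``provided the discrete solutions exist'' hypotheses of the axioms are met. Everything else is the standard estimator-reduction computation of \cite{ckns,axioms,goafem}, and indeed the statement cites \cite[Lemma~9]{goafem} verbatim, so the argument above is exactly the one intended.
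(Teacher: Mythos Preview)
Your proposal is correct and follows exactly the standard estimator-reduction argument that the cited reference \cite[Lemma~9]{goafem} contains; the paper itself gives no proof and simply appends \qed after the citation. One cosmetic point: the Young inequality is only needed for the stability term, since \eqref{axiom:reduction} already delivers a squared bound, so the factors $(1+\delta)$ on $\qred$ and $(1+\delta^{-1})$ on $\Cred^2$ are harmless overestimates rather than necessary steps.
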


\begin{proof}[Proof of Proposition~\ref{lemma1:convergence}]
\oldRevision{Let $\ell_2\in\N_0$ be the index defined in Lemma~\ref{prop:new}.
Without loss of generality, we may assume $\ell_2=0$ throughout the proof.} In order to prove that $\eta_\ell\to0$ as $\ell\to\infty$, we show that each subsequence $(\eta_{\ell_k})_{k\in\N_0}$ of the estimator sequence $(\eta_\ell)_{\ell\in\N_0}$ contains a further subsequence $(\eta_{\ell_{k_j}})_{j\in\N_0}$ with $\eta_{\ell_{k_j}}\to0$ as $j\to\infty$. According to basic calculus, this is in fact equivalent to $\eta_\ell\to0$ as $\ell\to\infty$.

\oldRevision{
\textit{Step 1: Boundedness of estimator sequence.}\quad
We apply Lemma~\ref{prop:new} with $\ell_2=0$. The quasi-monotonicity of the error estimator proves 
$\eta_\ell \le \Cmon\,\eta_0
 \text{ for all }\ell\in\N_0.$}

\textit{Step 2: Weak convergence of discrete solutions (subsequence).}\,\,
Recall the $a(\cdot,\cdot)$-induced energy norm $\enorm\cdot$.
\oooldRevision{From \oooldRevision{reliability~\eqref{axiom:reliability:new}} \oldRevision{and step~1}, we infer that
\begin{align*}\enorm{U_\ell}
\le \enorm{u} + \enorm{u-U_\ell}
\lesssim \enorm{u} + \sup_{\ell\in\N_0}\eta_\ell < \infty,
\end{align*}}%
i.e., the sequence of discrete solutions is uniformly bounded in $\HH$. Let $(\eta_{\ell_k})_{k\in\N_0}$ be an arbitrary subsequence of $(\eta_\ell)_{\ell\in\N_0}$ with corresponding discrete solutions $U_{\ell_k}$. Since $U_{\ell_k} \in \XX_{\ell_k} \subseteq \XX_\infty$, there exists a subsequence $(U_{\ell_{k_j}})_{j\in\N_0}$ of $(U_{\ell_k})_{k\in\N_0}$ and some limit $w\in\HH$ such that $U_{\ell_{k_j}}\rightharpoonup w$ weakly in $\HH$ as $j\to\infty$. According to Mazur's lemma (see, e.g.,~\cite[Theorem~3.12]{rudin}), convexity and closedness imply that $\XX_\infty$ is also closed with respect to the weak topology and hence $w\in\XX_\infty$.
Let $v\in\XX_\infty$. 
Let $P_\ell:\HH\to\XX_\ell$ be the orthogonal projection with respect to $\enorm\cdot$, i.e.,\begin{align*}
 \enorm{v-P_\ell v}
 = \min_{V_\ell\in\XX_\ell}\enorm{v-V_\ell}
 \quad\text{for all }v\in \HH.
\end{align*}
By definition of $\XX_\infty$, this also implies strong convergence $\enorm{v-P_\ell v}\to0$ as $\ell\to\infty$.
Recall that the product of a weakly convergent sequence and a strongly convergent sequence leads to convergence of the scalar product. Moreover, compact operators turn weak convergence into strong convergence, i.e., $\KK U_{\ell_{k_j}}\to \KK w$ strongly in $\HH^*$ as $j\to\infty$.
With these two observations, we derive
\begin{eqnarray*}
 0 \stackrel{\eqref{eq:discreteform}}= \dual{f}{P_{\ell_{k_j}}v} - a(U_{\ell_{k_j}},P_{\ell_{k_j}}v) - \dual{\KK U_{\ell_{k_j}}}{P_{\ell_{k_j}}v}
 \xrightarrow{j\to\infty}
 \dual{f}{v} - a(w,v) - \dual{\KK w}{v}.
\end{eqnarray*}
This proves that the weak limit $w\in\XX_\infty$ solves the Galerkin formulation
\begin{align}\label{eq:limitform}
 a(w,v) + \dual{\KK w}{v} = \dual{f}{v}
 \quad\text{for all }v\in\XX_\infty.
\end{align}

\textit{Step 3: Strong convergence of discrete solutions (subsequence).}\,
Note that $\enorm{w-U_{\ell_{k_j}}}^2 = \enorm{w}^2 - 2\,{\rm Re}\,a(w,U_{\ell_{k_j}})+ \enorm{U_{\ell_{k_j}}}^2$. Therefore, strong convergence $\enorm{w-U_{\ell_{k_j}}}\to0$ is equivalent to weak convergence $U_{\ell_{k_j}}\rightharpoonup w$ plus convergence of the norm $\enorm{U_{\ell_{k_j}}}\to\enorm{w}$. It thus only remains to prove the latter. 
With the previous observations, it holds
\begin{align*}
\enorm{U_{\ell_{k_j}}}^2 
= a(U_{\ell_{k_j}},U_{\ell_{k_j}}) 
\stackrel{\eqref{eq:discreteform}}=
&\dual{f}{U_{\ell_{k_j}}} - \dual{\KK U_{\ell_{k_j}}}{U_{\ell_{k_j}}}
 \\&\xrightarrow{j\to\infty}
\dual{f}{w} - \dual{\KK w}{w} 
\stackrel{\eqref{eq:limitform}}= a(w,w)= \enorm{w}^2.
\end{align*}

\textit{Step 4: Estimator reduction principle (subsequence).}\quad
Let $(\eta_{\ell_{k_j}})_{j\in\N_0}$ denote the estimator subsequence corresponding to $(U_{\ell_{k_j}})_{j\in\N_0}$.
With $\TT_{\ell_{k_{j+1}}}\in\refine(\TT_{{\ell_{k_j}}+1})$ and Lemma~\ref{lemma:reduction}, it holds $\eta_{\ell_{k_{j+1}}}^2 \le \qest\,\eta_{\ell_{k_j}}^2 + \Cest\,\norm{U_{\ell_{k_{j+1}}}-U_{\ell_{k_j}}}{\HH}^2$. Moreover, step~3 implies convergence $\norm{U_{\ell_{k_{j+1}}}-U_{\ell_{k_j}}}{\HH}\simeq\enorm{U_{\ell_{k_{j+1}}}-U_{\ell_{k_j}}}\to0$ as $j\to\infty$. 
Hence, the subsequence $(\eta_{\ell_{k_j}})_{j\in\N_0}$ is contractive up to a sequence that converges to zero.
Therefore, basic calculus (see, e.g.,~\cite[Lemma~2.3]{estconv}) proves convergence $\eta_{\ell_{k_j}} \to0$ as $j\to\infty$. 

\textit{Step 5: Estimator convergence (full sequence).}\quad
We have shown that each subsequence $(\eta_{\ell_k})_{k\in\N_0}$ of $(\eta_\ell)_{\ell\in\N_0}$ has a further subsequence 
$(\eta_{\ell_{k_j}})_{j\in\N_0}$ with $\eta_{\ell_{k_j}}\to0$ as $j\to\infty$. As noted above, this yields $\eta_\ell\to0$ as $\ell\to\infty$.

\textit{Step 6:  Strong convergence of discrete solutions (full sequence).}\quad
\oooldRevision{Finally, reliability~\eqref{axiom:reliability:new}} yields $\norm{u-U_{\ell}}{\HH}\lesssim\eta_\ell\to0$ as $\ell\to\infty$ and hence concludes the proof.
\end{proof}

\oldRevision{\begin{remark}\label{remark:brandnew}
Note that the proof of Proposition~\ref{lemma1:convergence} relies only on~\eqref{axiom:reliability}--\eqref{axiom:infty} to prove boundedness of the estimator sequence $(\eta_\ell)_{\ell\in\N_0}$ (see step~1 of the proof). Instead, we can also modify the marking step~{\rm(iv)} of Algorithm~\ref{algorithm} so that the assertion of Proposition~\ref{lemma1:convergence} remains true, if \eqref{axiom:stability}--\eqref{axiom:reliability:new} still hold, while \eqref{axiom:reliability}--\eqref{axiom:infty} fail. To this end, consider the following new marking criterion:
\begin{itemize}
\item[\revision{\rm(iv)}] If $\eta_\ell > \max_{j=0,\dots,\ell-1}\eta_j$, define $\MM_\ell:=\TT_\ell$. Otherwise, determine a set $\MM_\ell\subseteq\TT_\ell$ of up to the multiplicative constant $\Cmark$ minimal cardinality such that $\theta\eta_\ell^2\le \eta_\ell(\MM_\ell)^2$.
\end{itemize}
To see that this new marking criterion ensures that $(\eta_\ell)_{\ell\in\N_0}$ is bounded, we argue as follows:
\begin{itemize}

\item[]\hspace*{-5mm}\emph{Case 1:} Suppose that there exists an $M\in\N$ such that $\eta_\ell\le\max_{j=0,\dots,\ell-1}\eta_j$ for all $\ell\ge M$. Then, it even follows that $\eta_\ell \le \max_{j=0,\dots,M-1}\eta_j$ for all $\ell\in\N_0$.

\item[]\hspace*{-5mm}\emph{Case 2:} If the assumption of case~1 fails, the new step~{\rm(iv)} of Algorithm~\ref{algorithm} enforces infinitely many steps of uniform refinement. Therefore, Corollary~\ref{cor:uniform} applies and provides $m\in\N_0$ and $C>0$ such that \ooldRevision{all discrete subspaces $\XX_\star\subseteq\HH$ with $\XX_\star\supseteq\XX_m$ admit} a unique solution $U_\star\in\XX_\star$ of~\eqref{eq:discreteform} which is quasi-optimal in the sense of inequality~\eqref{eq:sauterschwab_cea}. Since~\eqref{axiom:stability}--\eqref{axiom:reliability:new} hold, \cite[Lemma~3.5]{axioms} applies and proves quasi-monotonicity of the estimator, i.e., 
\begin{align*}
 \eta_\bullet \le \Cmon\,\eta_\star 
 \quad\text{for all }\TT_\star\in\refine(\TT_m)
 \text{ and all }\TT_\bullet\in\refine(\TT_\star).
\end{align*}
In particular, this implies $\eta_\ell \le \Cmon\,\eta_m$ for all $\ell\ge m$, and therefore $\eta_\ell \le \max\{\Cmon,1\}\max_{j=0,\dots,m}\eta_j$
for all $\ell\in\N_0$.
\end{itemize}
Note that besides step~1 all steps of the proof of Proposition~\ref{lemma1:convergence} rely only on~\eqref{axiom:stability}--\eqref{axiom:reliability:new}. Therefore, we obtain $\eta_\ell\to0$ as $\ell\to\infty$. In particular, this implies that Case~1 above is the generic case and that optimal convergence rates will not be affected by the new marking strategy.
\end{remark}}%

\subsection{Definiteness on the ``discrete'' limit space~(\ref{axiom:infty})}
\label{section:marking}
\oldRevision{While~\eqref{axiom:stability}--\eqref{axiom:reliability} only rely on the a~posteriori error estimation strategy, the property~\eqref{axiom:infty} involves the ``discrete'' limit space $\XX_\infty =\overline{\bigcup_{\ell=0}^\infty\XX_\ell}$ generated by Algorithm~\ref{algorithm} and is hence less accessible for the numerical analysis. 
However, recall that $\HH=\XX_\infty$ is sufficient to ensure~\eqref{axiom:infty}.}
For $\HH=H^1_0(\Omega)$, the
following lemma provides a simple criterion for the latter identity.

\def\DD{\mathcal D}
\begin{lemma}\label{lemma2:infty}
Let $\HH=H^1_0(\Omega)$ and $\XX_\ell =\SSS^p_0(\TT_\ell)$ for some $p\ge1$. 
Suppose that the triangulations $\TT_\ell$ generated by Algorithm~\ref{algorithm} are uniformly shape regular with $\norm{h_\ell}{L^\infty(\Omega)}\to0$ as $\ell\to\infty$. Then, $\XX_\infty = \HH$ and hence assumption~\oooldRevision{\eqref{axiom:infty}} is satisfied.
\end{lemma}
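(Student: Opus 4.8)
The plan is a standard density argument. Because the spaces $\XX_\ell=\SSS^p_0(\TT_\ell)$ are nested, $\XX_\infty=\overline{\bigcup_{\ell=0}^\infty\XX_\ell}$ is a closed subspace of $\HH=H^1_0(\Omega)$, so it suffices to show that $\XX_\infty$ contains a set that is dense in $H^1_0(\Omega)$. I would take the canonical dense set $C_c^\infty(\Omega)$ (recall $H^1_0(\Omega)=\overline{C_c^\infty(\Omega)}$ by definition) and prove that every $\varphi\in C_c^\infty(\Omega)$ lies in $\XX_\infty$, i.e.\ can be approximated in the $H^1$-norm by finite element functions from $\bigcup_\ell\XX_\ell$.

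To this end, fix $\varphi\in C_c^\infty(\Omega)$ and let $I_\ell\colon H^1_0(\Omega)\to\XX_\ell$ be a quasi-interpolation operator that respects the homogeneous Dirichlet boundary condition --- for instance the Scott--Zhang operator, which is well-defined on $H^1_0(\Omega)$ and maps into $\SSS^p_0(\TT_\ell)$, or, since $\varphi$ is smooth and vanishes on $\partial\Omega$, simply the nodal interpolant. The classical local approximation estimates, which hold with a constant depending only on the polynomial degree $p$, the dimension $d$, and the shape-regularity constant of the family $(\TT_\ell)_{\ell\in\N_0}$ (here one uses the assumed uniform shape regularity together with the finite overlap of the interpolation patches), yield
\begin{align*}
 \norm{\varphi-I_\ell\varphi}{H^1(\Omega)}
 \le C\,\norm{h_\ell}{L^\infty(\Omega)}\,\norm{\varphi}{H^2(\Omega)},
\end{align*}
and by construction $I_\ell\varphi\in\XX_\ell$.

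Since $\norm{h_\ell}{L^\infty(\Omega)}\to0$ as $\ell\to\infty$ by assumption, the right-hand side tends to zero, whence $\varphi\in\overline{\bigcup_{\ell}\XX_\ell}=\XX_\infty$. As $\varphi\in C_c^\infty(\Omega)$ was arbitrary, passing to the closure gives $\HH=\overline{C_c^\infty(\Omega)}\subseteq\XX_\infty\subseteq\HH$, i.e.\ $\XX_\infty=\HH$. Finally, $\XX_\infty=\HH$ together with the well-posedness~\eqref{eq:wellposedness} of~\eqref{eq:weakform} implies~\eqref{axiom:infty}, exactly as noted in the text preceding the lemma.

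The only delicate point --- and it is a mild one --- is the choice of the interpolation operator: it must land in the conforming subspace $\SSS^p_0(\TT_\ell)$ (hence respect the zero boundary values) while still obeying the $\norm{h_\ell}{L^\infty(\Omega)}$-weighted approximation bound with a constant that depends on the meshes only through their shape-regularity constant. Using Scott--Zhang interpolation settles this in one stroke and removes any need for regularity beyond $\varphi\in C^\infty$; nodal interpolation is an equally valid alternative here because the approximated functions are smooth. Everything else is routine bookkeeping.
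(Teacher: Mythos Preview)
Your proof is correct and follows essentially the same approach as the paper: both argue by density, using the finite-element approximation estimate $\inf_{V_\ell\in\XX_\ell}\norm{w-V_\ell}{H^1(\Omega)}\lesssim\norm{h_\ell}{L^\infty(\Omega)}\norm{D^2w}{L^2(\Omega)}$ for smooth $w$ together with $\norm{h_\ell}{L^\infty(\Omega)}\to0$. The only cosmetic differences are that the paper takes the dense set $\mathcal D=H^2(\Omega)\cap H^1_0(\Omega)$ (citing~\cite{brennerscott} for the approximation property) rather than $C_c^\infty(\Omega)$, and phrases the conclusion via an explicit $\varepsilon/2$ argument instead of invoking closures directly.
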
%

\begin{proof}
For $w\in\DD:=H^2(\Omega)\cap H^1_0(\Omega)$, recall the approximation property $\inf_{V_\ell\in\XX_\ell}\norm{w-V_\ell}\HH \lesssim \norm{h_\ell}{L^\infty(\Omega)}\norm{D^2w}{L^2(\Omega)}$ from, e.g.,~\cite{brennerscott}. This proves
\begin{align}\label{eq:approximation_property}
 \lim_{\ell\to\infty}\inf_{V_\ell\in\XX_\ell}\norm{w-V_\ell}\HH=0
 \quad\text{for all }w\in\DD.
\end{align}
Let $v\in\HH$ and $\eps>0$. Since $\DD$ is dense within $H^1_0(\Omega)$, choose $w\in\DD$ with $\norm{v-w}\HH\le\eps/2$. According to~\eqref{eq:approximation_property}, there exists an index $\ell_\star\in\N_0$ such that $\inf_{V_\ell\in\XX_\ell}\norm{w-V_\ell}\HH\le\eps/2$ for all $\ell\ge\ell_\star$. In particular, the triangle inequality concludes
\begin{align*}
\inf_{V_\ell\in\XX_\ell}\norm{v-V_\ell}\HH
\le \norm{v-w}\HH + \inf_{V_\ell\in\XX_\ell}\norm{w-V_\ell}\HH\le\eps
\quad\text{for all }\ell\ge\ell_\star.
\end{align*}
This proves $v\in\XX_\infty= \overline{\bigcup_{\ell=0}^\infty\XX_\ell}$ and hence concludes $\XX_\infty  = \HH$.
\end{proof}%

The following proposition shows that  $\norm{h_\ell}{L^\infty(\Omega)}\to0$ and hence~\oooldRevision{\eqref{axiom:infty}} with $\XX_\infty=\HH$ is automatically verified in many generic situations. In particular, we note that~\eqref{eq:eff-rel} is well-known for residual error estimators and elliptic PDEs with polynomial coefficients.

\def\PP{\mathcal P}
\def\Ceff{C_{\rm eff}}
\begin{proposition}\label{prop:A:infty}
Suppose~\oooldRevision{\eqref{axiom:stability}--\eqref{axiom:reliability:new}} and $0<\theta\le1$. Employ the notation of Algorithm~\ref{algorithm}. 
Let $p\ge1$ and $q\ge0$ be polynomial degrees.
Suppose that $\HH = H^1_0(\Omega)$ and $\XX_\ell =\SSS^p_0(\TT_\ell)$. For $f\in L^2(\Omega)$, let $f_\ell\in\PP^{q}(\TT_\ell):=\set{V_\ell\in L^2(\Omega)}{\forall T\in\TT_\ell\quad V_\ell|_T\text{ is a polynomial}\linebreak \text{of degree }\le q}$ be the $L^2$-best approximation of $f$ in $\PP^q(\TT_\ell)$. Suppose that the error estimator is \oooldRevision{even} 
reliable in the sense of 
\begin{align}\label{eq:eff-rel}
\begin{split}
 \norm{u-U_\ell}{H^1(\Omega)} + \norm{h_\ell(f-f_\ell)}{L^2(\Omega)}
 \le\Crel\,\eta_\ell
 \quad\text{for all }\ell\ge0,
\end{split}
\end{align}
where $\Crel>0$ is independent of $\ell$. Suppose that for all $\ell\in\N$ and all $T\in\TT_\ell$ it holds $u|_T\not\in\PP^p(T)$ or $f|_T\not\in\PP^q(T)$, i.e., the continuous solution or the given data are not locally polynomial. Then, Algorithm~\ref{algorithm} implies
convergence $\norm{h_\ell}{L^\infty(\Omega)}\to0$ as $\ell\to\infty$. In particular, assumption~\oooldRevision{\eqref{axiom:infty}} is satisfied \oooldRevision{with $\XX_\infty=\HH$}.
\end{proposition}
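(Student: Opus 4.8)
The plan is to argue by contradiction. Assume $\norm{h_\ell}{L^\infty(\Omega)}\not\to0$. Since every mesh in $\refine(\TT_0)$ is obtained from $\TT_0$ by finitely many bisection-type refinement steps, the local mesh-size is monotone ($h_{\ell+1}\le h_\ell$ pointwise), so $\norm{h_\ell}{L^\infty(\Omega)}$ is non-increasing and converges to a limit $h_\star>0$. Hence, for every $\ell$ there is some $T_\ell\in\TT_\ell$ with $|T_\ell|\ge h_\star^d$. The set of all elements occurring in some $\TT_\ell$ and having volume $\ge h_\star^d$ is finite (each such element is a descendant of one of the finitely many $T_0\in\TT_0$ by a uniformly bounded number of bisections). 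A pigeonhole argument thus produces a single element $T$ with $T=T_\ell$ for infinitely many $\ell$, and since a refined element is replaced by strictly smaller descendants and never reappears, $T\in\TT_\ell$ for \emph{all} $\ell\ge\ell_1$, where $\ell_1\ge1$ denotes its first occurrence; in particular $T$ is never marked nor refined for $\ell\ge\ell_1$.

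Next I would bring in plain convergence of the adaptive algorithm, i.e., $\eta_\ell\to0$ and $\norm{u-U_\ell}{\HH}\to0$ as $\ell\to\infty$. This is the assertion of Proposition~\ref{lemma1:convergence}; the delicate point (see below) is that, although that proposition is stated under \eqref{axiom:stability}--\eqref{axiom:infty}, its proof uses \eqref{axiom:infty} \emph{only} to guarantee boundedness of the estimator sequence, while all remaining steps require \eqref{axiom:stability}--\eqref{axiom:reliability:new} alone, and the needed boundedness is available here by the expanded marking of Remark~\ref{remark:brandnew} (recall also Lemma~\ref{lemma:ell1}, which confines uniform refinements to a finite initial segment). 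Granting $\eta_\ell\to0$, the enhanced reliability \eqref{eq:eff-rel} gives both $\norm{u-U_\ell}{H^1(\Omega)}\to0$ and $\norm{h_\ell(f-f_\ell)}{L^2(\Omega)}\to0$.

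It then remains to exploit that $T$ is frozen for $\ell\ge\ell_1$. On this element, $f_\ell|_T$ is the $L^2(T)$-orthogonal projection of $f|_T$ onto $\PP^q(T)$ and is therefore independent of $\ell\ge\ell_1$; denote it by $f_T$. From $h_T^2\,\norm{f-f_T}{L^2(T)}^2\le\norm{h_\ell(f-f_\ell)}{L^2(\Omega)}^2\to0$ and the fact that $h_T>0$ is fixed, we get $\norm{f-f_T}{L^2(T)}=0$, i.e., $f|_T\in\PP^q(T)$. Likewise $U_\ell|_T\in\PP^p(T)$ for all $\ell\ge\ell_1$ and $\norm{u-U_\ell}{H^1(T)}\le\norm{u-U_\ell}{H^1(\Omega)}\to0$, so $U_\ell|_T\to u|_T$ in $H^1(T)$; since $\PP^p(T)$ is a finite-dimensional, hence closed, subspace of $H^1(T)$, this forces $u|_T\in\PP^p(T)$. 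Taking $\ell=\ell_1\ge1$, the element $T\in\TT_{\ell_1}$ satisfies both $u|_T\in\PP^p(T)$ and $f|_T\in\PP^q(T)$, contradicting the standing hypothesis. Hence $\norm{h_\ell}{L^\infty(\Omega)}\to0$, and Lemma~\ref{lemma2:infty} (applicable since the meshes generated by $\refine$ are uniformly shape regular) yields $\XX_\infty=\HH$, so \eqref{axiom:infty} holds.

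The main obstacle is precisely the plain-convergence input $\eta_\ell\to0$ used above: it must be obtained without circularly invoking \eqref{axiom:infty}, which is what we are trying to establish. The resolution is to observe that the proof of Proposition~\ref{lemma1:convergence} splits into a boundedness step --- the only one relying on \eqref{axiom:infty} --- and a subsequence/estimator-reduction step relying on \eqref{axiom:stability}--\eqref{axiom:reliability:new} alone; combined with the expanded marking of Remark~\ref{remark:brandnew}, this delivers $\eta_\ell\to0$ under the present hypotheses. All other ingredients --- the pigeonhole extraction of the frozen element, the $L^2(T)$- and $H^1(T)$-closedness arguments on $T$, and the appeal to Lemma~\ref{lemma2:infty} --- are routine.
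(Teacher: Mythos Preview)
Your argument mirrors the paper's proof: contradiction, extract a never-refined element $T$, invoke $\eta_\ell\to0$ via Proposition~\ref{lemma1:convergence}, then use~\eqref{eq:eff-rel} on $T$ to force both $u|_T\in\PP^p(T)$ and $f|_T\in\PP^q(T)$, a contradiction; finish with Lemma~\ref{lemma2:infty}. The paper is simply terser on the pigeonhole step and uses $L^2(T)$- instead of $H^1(T)$-convergence for $u$, but the structure is identical.

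Where you go beyond the paper is in flagging the circularity: Proposition~\ref{lemma1:convergence} is stated under~\eqref{axiom:stability}--\eqref{axiom:infty}, while~\eqref{axiom:infty} is precisely the conclusion here. The paper does not address this and simply cites Proposition~\ref{lemma1:convergence}. Your proposed fix --- supply boundedness of $(\eta_\ell)$ via the modified marking of Remark~\ref{remark:brandnew} --- does close the loop, but that modified marking is an \emph{additional} hypothesis, not part of the statement of Proposition~\ref{prop:A:infty} (which refers to Algorithm~\ref{algorithm} with its standard step~(iv)). So both the paper's proof and yours, read literally, share this gap; you have correctly diagnosed it, but the remedy you invoke is not available ``under the present hypotheses'' without amending either the marking rule or the list of assumed axioms.
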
%

\begin{proof}
We argue by contradiction and suppose that $\norm{h_\ell}{L^\infty(\Omega)}\ge C>0$ for some $C>0$ and all $\ell\in\N_0$. Since the meshes $\TT_\ell$ are obtained by successive refinement, there exists some index $\ell_\star\in\N_0$ and some element $T\in\TT_{\ell_\star}$ which remains unrefined, i.e., $T\in\TT_\ell$ for all $\ell\ge\ell_\star$. Proposition~\ref{lemma1:convergence} yields $\eta_\ell\to0$ as $\ell\to\infty$. In particular, we infer 
\begin{align*}
 \norm{u-U_\ell}{L^2(T)} + \norm{f-f_\ell}{L^2(T)}\xrightarrow{\ell\to\infty}0.
\end{align*}
Since $U_\ell\in\PP^p(T)$ and $f_\ell\in\PP^q(T)$, we conclude that $u\in\PP^p(T)$ and $f\in\PP^q(T)$. This, however, contradicts the assumptions on $u$ and $f$. Overall, we thus obtain $\norm{h_\ell}{L^\infty(\Omega)}\to0$ as $\ell\to\infty$, and Lemma~\ref{lemma2:infty} concludes the proof.
\end{proof}%

\oldRevision{The next proposition shows} that $\norm{h_\ell}{L^\infty(\Omega)}\to0$ and hence~\oooldRevision{\eqref{axiom:infty}} with $\XX_\infty=\HH$ can also be guaranteed by \oldRevision{employing an} \oooldRevision{expanded D\"orfler} marking strategy in \oooldRevision{\oldRevision{step~{\rm(iv)}}} of Algorithm~\ref{algorithm}. We stress that this does not affect optimal convergence behaviour \oldRevision{in the sense of Theorem~\ref{theorem:optimal} below.}

\begin{proposition}\label{prop:doerfler}
\oooldRevision{Suppose~$0<\theta\le1$}. Employ the notation of Algorithm~\ref{algorithm}. Let $\Cmark'>0$. For all $\ell\in\N_0$, we suppose that the set $\MM_\ell\subseteq\TT_\ell$ in \oooldRevision{\oldRevision{step~{\rm(iv)}}} of Algorithm~\ref{algorithm} is selected as follows: 
\begin{itemize}
\item Let $\MM_\ell'\subseteq\TT_\ell$ be a set of up to the multiplicative constant $\Cmark'$ minimal cardinality such that $\theta\eta_\ell^2 \le \eta_\ell(\MM_\ell')^2$. 
\item Suppose that $\TT_\ell = \{T_1,\dots,T_N\}$ is sorted such that $|T_1| \ge |T_2| \ge \dots \ge |T_N|$.
\item With arbitrary $1\le n\le\#\MM_\ell'$, define $\MM_\ell := \MM_\ell' \cup \{T_1,\dots,T_n\}$.
\end{itemize}
Then, $\MM_\ell\subseteq\TT_\ell$ is a set of up to the multiplicative constant $\Cmark:=2\Cmark'$ minimal cardinality such that $\theta\eta_\ell^2 \le \eta_\ell(\MM_\ell)^2$. Moreover, Algorithm~\ref{algorithm} guarantees $\norm{h_\ell}{L^\infty(\Omega)}\to0$ as $\ell\to\infty$. 
In particular, assumption~\oooldRevision{\eqref{axiom:infty}} \oooldRevision{with $\XX_\infty=\HH$} is satisfied
 for $\HH=H^1_0(\Omega)$ and $\XX_\ell =\SSS^p_0(\TT_\ell)$.
\end{proposition}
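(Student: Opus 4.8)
The plan is to verify two things: first, that the enlarged set $\MM_\ell$ still has (up to a constant) minimal cardinality for the D\"orfler criterion, so that Algorithm~\ref{algorithm} is a legitimate instance of the abstract framework; and second, that the extra marking of the largest elements forces $\norm{h_\ell}{L^\infty(\Omega)}\to0$. Once the latter is established, Lemma~\ref{lemma2:infty} immediately gives $\XX_\infty=\HH$ and hence~\eqref{axiom:infty}.

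For the cardinality claim, note that $\MM_\ell'$ already satisfies $\theta\eta_\ell^2\le\eta_\ell(\MM_\ell')^2$, hence so does the superset $\MM_\ell=\MM_\ell'\cup\{T_1,\dots,T_n\}$ since the estimator contributions are nonnegative. Moreover, if $\MM_\ell^{\min}$ denotes a set of truly minimal cardinality with $\theta\eta_\ell^2\le\eta_\ell(\MM_\ell^{\min})^2$, then by assumption $\#\MM_\ell'\le\Cmark'\,\#\MM_\ell^{\min}$, and since $n\le\#\MM_\ell'$ we get $\#\MM_\ell\le 2\#\MM_\ell'\le 2\Cmark'\,\#\MM_\ell^{\min}=\Cmark\,\#\MM_\ell^{\min}$. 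Thus $\MM_\ell$ is admissible in step~{\rm(iv)} with $\Cmark=2\Cmark'$, and all results proved under the hypotheses of Algorithm~\ref{algorithm} (in particular Proposition~\ref{lemma1:convergence} and Theorem~\ref{theorem:optimal}) remain valid.

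For the mesh-size decay, I argue by contradiction: suppose $\norm{h_\ell}{L^\infty(\Omega)}\ge c>0$ for all $\ell$. By shape regularity of newest vertex bisection, the element volumes attained in any $\TT_\ell\in\T$ take values in a fixed finite set within each "generation," so the largest element volume $\max_{T\in\TT_\ell}|T|$ can only take finitely many values below any threshold; since $\norm{h_\ell}{L^\infty(\Omega)}^d=\max_{T\in\TT_\ell}|T|$ stays bounded below, there is a subsequence along which $T_1=T_1^{(\ell)}$ (the largest element, which is marked and hence refined in the next step) has volume bounded below by some fixed $c'>0$. But a refined element disappears forever, and along our marking strategy the largest element of $\TT_\ell$ is always refined; once every element of volume $\ge c'$ that ever appears has been refined — which happens after finitely many steps because at each step at least one such element is removed and newest vertex bisection produces only finitely many elements of volume $\ge c'$ from $\TT_0$ — no element of volume $\ge c'$ remains, contradicting $\norm{h_\ell}{L^\infty(\Omega)}^d\ge (c')$. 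Hence $\norm{h_\ell}{L^\infty(\Omega)}\to0$.

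The main obstacle is making the last argument fully rigorous: one must invoke the structural property of newest vertex bisection that, starting from $\TT_0$, there are only finitely many elements of volume above any fixed threshold across \emph{all} refinements in $\T$ (this follows from $h_\plus|_T\le\qmesh\,h_\star|_T$ on refined elements together with uniform shape regularity), and then observe that the marking rule $\{T_1,\dots,T_n\}\subseteq\MM_\ell$ with $T_1$ of maximal volume forces the currently-largest surviving coarse element to be refined at every step, so after finitely many steps all such elements are exhausted. With $\norm{h_\ell}{L^\infty(\Omega)}\to0$ in hand, uniform shape regularity of the bisection meshes and Lemma~\ref{lemma2:infty} conclude that $\XX_\infty=\HH$, hence~\eqref{axiom:infty} holds for $\HH=H^1_0(\Omega)$ and $\XX_\ell=\SSS^p_0(\TT_\ell)$.
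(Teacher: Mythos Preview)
Your proof is correct and follows essentially the same approach as the paper: you verify the cardinality bound for $\MM_\ell$ (which the paper dismisses as ``obvious''), argue that marking the largest element at every step forces $\norm{h_\ell}{L^\infty(\Omega)}\to0$, and then invoke Lemma~\ref{lemma2:infty}. Your contradiction argument for the mesh-size decay is a more explicit rendering of the paper's one-line claim that refining the largest element at each step, together with the contraction $h_\plus|_T\le\qmesh h_\star|_T$ on refined elements, ``necessarily'' yields $\norm{h_\ell}{L^\infty(\Omega)}\to0$; the paper does not spell out the finiteness-of-large-descendants point that you make precise.
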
%

\begin{proof}
The claims on $\MM_\ell$ are obvious. Recall that refinement leads to a uniform contraction of the mesh-size, i.e., $h_{\ell+1}|_T \le \qmesh h_\ell|_T$ for all $T\in\MM_\ell\subseteq\TT_\ell\backslash\TT_{\ell+1}$. Since each mesh $\TT_\ell$ is a finite set and each step of the adaptive algorithm guarantees that (at least) the element $T\in\TT_\ell$ with the largest size $|T| \simeq (h_\ell|_T)^d$ is refined, this implies necessarily $\norm{h_\ell}{L^\infty(\Omega)}\to0$ as $\ell\to\infty$. Lemma~\ref{lemma2:infty} concludes the proof.
\end{proof}%

\subsection{Linear convergence of adaptive algorithm}
\label{section:convergence:linear}
The analysis in this section adapts and extends some ideas from~\cite{ffp}. We note that the latter work uses strong ellipticity~\eqref{eq:ffp} of $b(\cdot,\cdot)$, while we only rely on ellipticity~\eqref{eq:elliptic} of $a(\cdot,\cdot)$.

\begin{lemma}[{\cite[Lemma~3.5]{ffp}}]\label{lemma3:convergence}
Suppose~\eqref{axiom:stability}--\oooldRevision{\eqref{axiom:infty}} and $0<\theta\le1$. Employ the notation of Algorithm~\ref{algorithm}.
Then, the sequences $(e_\ell)_{\ell \in \N}$ and $(E_\ell)_{\ell \in \N}$ defined by
\begin{align*}
	e_\ell &:= \begin{cases}
		\frac{u-U_\ell}{\norm{u-U_\ell}{\HH}} \quad &\text{for} \,\, u \neq U_\ell, \\
		0 \quad &\text{else},
	\end{cases} \\
	E_\ell &:= \begin{cases}
		\frac{U_{\ell +1}-U_\ell}{\norm{U_{\ell+1}-U_\ell}{\HH}} \quad &\text{for} \,\, U_{\ell+1} \neq U_\ell, \\
		0 \quad &\text{else},
	\end{cases}
\end{align*}
converge weakly to zero, i.e.,
$\lim\limits_{\ell \rightarrow \infty} \dual{\phi}{e_\ell} = 0 = \lim\limits_{\ell \rightarrow \infty} \dual{\phi}{E_\ell}$
\text{for all} $\phi \in \HH^*$.
\end{lemma}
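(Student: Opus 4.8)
The plan is to show that $e_\ell \rightharpoonup 0$ and $E_\ell \rightharpoonup 0$ weakly in $\HH$, arguing by contradiction via weak compactness. Since both sequences are bounded (each $e_\ell$ and $E_\ell$ has norm $0$ or $1$), it suffices to show that every weakly convergent subsequence has weak limit $0$; then a standard subsequence-of-subsequence argument gives weak convergence of the full sequence. So I would pass to a subsequence along which $e_{\ell_k}\rightharpoonup e$ and $E_{\ell_k}\rightharpoonup E$ in $\HH$, and aim to prove $e=E=0$.

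The key ingredients are the convergence results already established. From Proposition~\ref{lemma1:convergence} we have $\norm{u-U_\ell}{\HH}\to0$ and $\eta_\ell\to0$; in particular $U_\ell\to u$ strongly in $\HH$ and also $U_{\ell+1}-U_\ell\to0$ strongly. The crucial point is that the \emph{normalized} differences need not be small, so one must exploit the structure of the equation. For the limit $e$: for any fixed $v\in\XX_\infty$, test the difference of the continuous equation~\eqref{eq:weakform} and the discrete equation~\eqref{eq:discreteform} at level $\ell_k$ with the Galerkin projection $P_{\ell_k}v$; using Galerkin orthogonality in the $a(\cdot,\cdot)$-part together with $\enorm{v-P_{\ell_k}v}\to0$, one obtains after dividing by $\norm{u-U_{\ell_k}}{\HH}$ a relation of the form $a(e_{\ell_k},P_{\ell_k}v) + \dual{\KK e_{\ell_k}}{P_{\ell_k}v} = o(1)$. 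Passing to the limit — using weak convergence $e_{\ell_k}\rightharpoonup e$ for the $a$-term, and \emph{compactness} of $\KK$ (which turns $e_{\ell_k}\rightharpoonup e$ into $\KK e_{\ell_k}\to\KK e$ strongly) for the perturbation term, plus $P_{\ell_k}v\to v$ strongly — yields $b(e,v)=0$ for all $v\in\XX_\infty$. Since $e\in\XX_\infty$ (Mazur's lemma, as in the proof of Proposition~\ref{lemma1:convergence}), assumption~\eqref{axiom:infty} forces $e=0$. The argument for $E$ is analogous: $U_{\ell+1}-U_\ell$ satisfies $b(U_{\ell+1}-U_\ell,V_\ell)=0$ for all $V_\ell\in\XX_\ell$, so testing with $P_{\ell_k}v$ and dividing by $\norm{U_{\ell_k+1}-U_{\ell_k}}{\HH}$ gives $b(E_{\ell_k},P_{\ell_k}v)=o(1)$, and the same limiting procedure plus~\eqref{axiom:infty} gives $E=0$.

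The main obstacle is the handling of the perturbation term $\dual{\KK e_{\ell_k}}{P_{\ell_k}v}$ (and likewise for $E$): without compactness of $\KK$ this term would not pass to the limit, since $e_{\ell_k}$ converges only weakly. Compactness is exactly what saves the day — it upgrades $e_{\ell_k}\rightharpoonup e$ to $\KK e_{\ell_k}\to\KK e$ in $\HH^*$, so that pairing against the strongly convergent $P_{\ell_k}v$ is legitimate. A secondary technical point is justifying that we may work with test functions in $\XX_\infty$ rather than all of $\HH$ in the characterization $b(e,v)=0$: this is fine because~\eqref{axiom:infty} is precisely a definiteness statement on $\XX_\infty$, and $e\in\XX_\infty$. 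One should also note the edge case where $e_\ell=0$ or $E_\ell=0$ for infinitely many $\ell$, which only makes the conclusion easier. Everything else — boundedness, the subsequence extraction, and the Galerkin-orthogonality manipulations — is routine and parallels Steps~2–3 of the proof of Proposition~\ref{lemma1:convergence}.
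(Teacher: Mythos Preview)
Your proof is correct and follows essentially the same strategy as the paper: subsequence extraction via boundedness, Galerkin orthogonality to get $b(e_{\ell_{k_j}},\cdot)=0$ on discrete test functions, passage to the weak limit to obtain $b(w,v)=0$ for all $v\in\XX_\infty$, then invoke~\eqref{axiom:infty} (with $w\in\XX_\infty$ via Mazur). One simplification worth noting: the paper tests against a \emph{fixed} $V_n\in\XX_n$ (valid for all $\ell_{k_j}\ge n$) rather than the moving $P_{\ell_k}v$, so that $b(e_{\ell_{k_j}},V_n)=0$ exactly and the bounded linear functional $b(\cdot,V_n)\in\HH^*$ passes directly to the weak limit without any splitting into $a$- and $\KK$-parts---hence compactness of $\KK$ is nowhere needed in this lemma, contrary to your remark that it is ``exactly what saves the day.''
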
%

\begin{proof}
We consider the sequence $(e_\ell)_{\ell\in\N_0}$ and note that the claim for $(E_\ell)_{\ell\in\N_0}$ follows along the same lines. To prove $e_\ell\rightharpoonup0$ as $\ell\to\infty$, we show that each subsequence $(e_{\ell_k})_{k\in\N_0}$ admits a further subsequence $(e_{\ell_{k_j}})_{j\in\N_0}$ such that $e_{\ell_{k_j}}\rightharpoonup0$ as $j\to\infty$.
Let $(e_{\ell_k})_{k\in\N_0}$ be a subsequence of $(e_\ell)_{\ell\in\N_0}$. Due to boundedness $\norm{e_{\ell_k}}\HH\le1$, there exists a further subsequence $(e_{\ell_{k_j}})_{j\in\N_0}$ such that $e_{\ell_{k_j}}\rightharpoonup w\in\HH$ as $j\to\infty$. It remains to show that $w=0$. Note that $U_\ell,u\in\XX_\infty$ \oldRevision{(see Proposition~\ref{lemma1:convergence})} implies $e_\ell\in\XX_\infty$ and hence $w\in\XX_\infty$. 
Note the Galerkin orthogonality
\begin{align}\label{eq:galerkin:orthogonality}
	0 = b(u-U_\star,V_\star) = a(u-U_{\star},V_\star) + \dual{\KK(u - U_\star)}{V_\star} \quad \text{for all} \,\, V_\star \in \XX_\star.
\end{align}
Let $n\in\N$ and $V_n\in\XX_n$. If $\ell_{k_j}\ge n$ and $e_{\ell_{k_j}}\neq0$, the Galerkin orthogonality proves
\begin{align*}
 b(e_{\ell_{k_j}},V_n) = b(u-U_{\ell_{k_j}},V_n)/\norm{u-U_{\ell_{k_j}}}\HH = 0
\end{align*}
and hence $b(e_{\ell_{k_j}},V_n) = 0$ for all $\ell_{k_j}\ge n$. With weak convergence, this yields
\begin{align*}
 b(w,V_n) = \lim_{j\to\infty}b(e_{\ell_{k_j}},V_n) = 0
 \quad\text{for all $V_n\in\XX_n$ and all $n\in\N_0$.}
\end{align*}
Let $v\in\XX_\infty$.
By definition of $\XX_\infty$, there exists a sequence $(V_n)_{n\in\N_0}$ with $V_n\in\XX_n$ and $\norm{v-V_n}\HH \to 0$ as $n\to\infty$.
Therefore the preceding identity implies $b(w,v)=0$ for all $v\in\XX_\infty$. Finally, assumption~\oooldRevision{\eqref{axiom:infty}} concludes $w=0$.
\end{proof}%

The following quasi-orthogonality~\eqref{eq:orthogonality} is a consequence of Lemma~\ref{lemma3:convergence} and the Galerkin orthogonality~\eqref{eq:galerkin:orthogonality}. For elliptic $b(\cdot,\cdot)$, it is proved
in~\cite[Proposition~3.6]{ffp}. Our proof essentially follows those ideas, but we use the norm $\enorm\cdot$ induced by $a(\cdot,\cdot)$ instead of the quasi-norm induced by $b(\cdot,\cdot)$, if $b(\cdot,\cdot)$ was elliptic. 
For the convenience of the reader, we include the most important steps of the proof.

\begin{lemma}
\label{lemma2:convergence}
Suppose~\eqref{axiom:stability}--\oooldRevision{\eqref{axiom:infty}} and $0<\theta\le1$. Employ the notation of Algorithm~\ref{algorithm}. Then, for any $0<\eps<1$, there exists $\ell_3\in\N_0$ such that
\begin{align}\label{eq:orthogonality}
 \enorm{u-U_{\ell+1}}^2 + \enorm{U_{\ell+1}-U_\ell}^2 \le \frac{1}{1-\eps}\,\enorm{u-U_\ell}^2
 \quad\text{for all $\ell\ge\ell_3$.}
\end{align}
\end{lemma}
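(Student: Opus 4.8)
The plan is to expand the energy-norm error over the nested spaces, use Galerkin orthogonality~\eqref{eq:galerkin:orthogonality} to rewrite the cross term as the compact-perturbation contribution $\dual{\KK(u-U_{\ell+1})}{U_{\ell+1}-U_\ell}$, and then exploit compactness of $\KK$ together with the weak convergence from Lemma~\ref{lemma3:convergence} to make this contribution arbitrarily small once $\ell$ is large. Throughout, I work in the regime $\ell\ge\ell_1$ in which the Galerkin solutions $U_\ell,U_{\ell+1}$ exist (Lemma~\ref{lemma:ell1}); recall also $u\in\XX_\infty$ from Proposition~\ref{lemma1:convergence}.

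First, since $U_{\ell+1}-U_\ell\in\XX_{\ell+1}$, Galerkin orthogonality~\eqref{eq:galerkin:orthogonality} gives $b(u-U_{\ell+1},U_{\ell+1}-U_\ell)=0$, and the definition of $b(\cdot,\cdot)$ turns this into $a(u-U_{\ell+1},U_{\ell+1}-U_\ell)=-\dual{\KK(u-U_{\ell+1})}{U_{\ell+1}-U_\ell}$. Writing $u-U_\ell=(u-U_{\ell+1})+(U_{\ell+1}-U_\ell)$, expanding $\enorm{u-U_\ell}^2=a(u-U_\ell,u-U_\ell)$, and using that $a(\cdot,\cdot)$ is hermitian, one obtains the identity
\begin{align*}
 \enorm{u-U_{\ell+1}}^2+\enorm{U_{\ell+1}-U_\ell}^2
 =\enorm{u-U_\ell}^2+2\,{\rm Re}\,\dual{\KK(u-U_{\ell+1})}{U_{\ell+1}-U_\ell}.
\end{align*}
Next, I would bound the last term. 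With the normalized error $e_{\ell+1}$ from Lemma~\ref{lemma3:convergence}, write $\KK(u-U_{\ell+1})=\norm{u-U_{\ell+1}}{\HH}\,\KK e_{\ell+1}$; then Cauchy--Schwarz for the duality pairing, Young's inequality, and the norm equivalence $\norm{v}{\HH}^2\le\alpha^{-1}\enorm{v}^2$ from~\eqref{eq:elliptic} yield
\begin{align*}
 2\,\big|\dual{\KK(u-U_{\ell+1})}{U_{\ell+1}-U_\ell}\big|
 \le\alpha^{-1}\,\norm{\KK e_{\ell+1}}{\HH^*}\,\big(\enorm{u-U_{\ell+1}}^2+\enorm{U_{\ell+1}-U_\ell}^2\big).
\end{align*}
By Lemma~\ref{lemma3:convergence} we have $e_{\ell+1}\rightharpoonup0$ in $\HH$, so compactness of $\KK$ forces $\norm{\KK e_{\ell+1}}{\HH^*}\to0$. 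Hence, given $0<\eps<1$, I choose $\ell_3\ge\ell_1$ such that $\alpha^{-1}\norm{\KK e_{\ell+1}}{\HH^*}\le\eps$ for all $\ell\ge\ell_3$; inserting this into the identity above and absorbing the term gives $(1-\eps)\big(\enorm{u-U_{\ell+1}}^2+\enorm{U_{\ell+1}-U_\ell}^2\big)\le\enorm{u-U_\ell}^2$, which is~\eqref{eq:orthogonality}.

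The only non-routine point is this final estimate: bounding $|\dual{\KK(u-U_{\ell+1})}{U_{\ell+1}-U_\ell}|$ crudely through the operator norm of $\KK$ would only produce a fixed constant in front of $\enorm{u-U_{\ell+1}}^2+\enorm{U_{\ell+1}-U_\ell}^2$, which is useless here. The required smallness comes precisely from passing to the normalized error $e_{\ell+1}$ and using that a compact operator maps the weakly null sequence $(e_{\ell+1})_{\ell}$ to a norm-null sequence — this is where Lemma~\ref{lemma3:convergence} (and hence assumption~\eqref{axiom:infty}) enters. All remaining ingredients (the binomial expansion of $\enorm{\cdot}^2$, hermiticity of $a(\cdot,\cdot)$, Cauchy--Schwarz, Young's inequality, and the norm equivalence) are standard, so I expect no further obstacle; one only has to keep the discussion within the index range $\ell\ge\ell_1$ where the Galerkin solutions and~\eqref{eq:galerkin:orthogonality} are available.
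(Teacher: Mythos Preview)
Your argument is correct, and it is in fact cleaner than the paper's own proof. The paper expands in terms of $b(\cdot,\cdot)$ first, obtaining the identity
\[
 b(u-U_{\ell+1},u-U_{\ell+1})+b(U_{\ell+1}-U_\ell,U_{\ell+1}-U_\ell)+b(U_{\ell+1}-U_\ell,u-U_{\ell+1})=b(u-U_\ell,u-U_\ell),
\]
and only then translates each $b(v,v)$ into $\enorm{v}^2+\dual{\KK v}{v}$. This produces three diagonal $\KK$-terms and one cross term, and to control them the paper needs \emph{both} normalized sequences $(e_\ell)$ and $(E_\ell)$ from Lemma~\ref{lemma3:convergence}; after several estimates and a Young inequality it arrives at $(1-2\delta C)\big(\enorm{u-U_{\ell+1}}^2+\enorm{U_{\ell+1}-U_\ell}^2\big)\le(1+\delta C)\,\enorm{u-U_\ell}^2$ and then adjusts $\delta$. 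By contrast, you expand directly in the $a$-inner product, so only the single cross term $2\,{\rm Re}\,\dual{\KK(u-U_{\ell+1})}{U_{\ell+1}-U_\ell}$ survives, and you only need $(e_{\ell+1})$ (not $(E_\ell)$) to make it small. The gain is a shorter argument with fewer auxiliary terms; the paper's route is a bit more mechanical but arrives at the same inequality. One minor remark: in your Young step the constant is really $2\alpha^{-1}\norm{\KK e_{\ell+1}}{\HH^*}\enorm{u-U_{\ell+1}}\enorm{U_{\ell+1}-U_\ell}\le\alpha^{-1}\norm{\KK e_{\ell+1}}{\HH^*}\big(\enorm{u-U_{\ell+1}}^2+\enorm{U_{\ell+1}-U_\ell}^2\big)$, which you state correctly, so no adjustment is needed.
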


\begin{proof}
Let $\eps>0$. Let $\delta>0$ be a free parameter which is fixed later.
Consider the sequences $(e_\ell)_{\ell\in\N_0}$ and $(E_\ell)_{\ell\in\N_0}$ of Lemma~\ref{lemma3:convergence}.
Recall that the compact operator $\KK$ turns weak convergence $e_\ell,E_\ell \rightharpoonup 0$ in $\HH$ into strong convergence
$\KK e_\ell, \KK E_\ell \rightarrow 0$ in $\HH^*$ as $\ell \rightarrow \infty$. 
For any $\delta >0$, this provides some $\ell_3 \in \N$ such that 
\begin{align*}
 \norm{\KK e_{\ell}}{\HH^*} + \norm{\KK E_{\ell}}{\HH^*} \le \delta
 \quad\text{for all }\ell\ge\ell_3.
\end{align*}
For any $w\in\HH$, this gives
\begin{align*}
	|\dual{\KK (u - U_\ell )}{w}| &= |\dual{\KK e_\ell}{w}| \, \norm{u - U_\ell}{\HH}
	\leq \delta\, \norm{u - U_\ell}{\HH} \norm{w}{\HH}
\end{align*}
as well as
\begin{align*}
	|\dual{\KK (U_{\ell+1} - U_\ell )}{w}| &= |\dual{\KK E_{\ell}}{w}| \, \norm{U_{\ell+1} - U_\ell}{\HH}
		\leq \delta\,  \norm{U_{\ell+1} - U_\ell}{\HH}\norm{w}{\HH}.
\end{align*}
Algebraic computations with the Galerkin orthogonality~\eqref{eq:galerkin:orthogonality} show
\begin{align*}
 b(u\!-\!U_{\ell+1},u\!-\!U_{\ell+1})
 + b(U_{\ell+1}\!-\!U_\ell,U_{\ell+1}\!-\!U_\ell)
 + b(U_{\ell+1} \!-\! U_\ell,u \!-\! U_{\ell+1}) 
 = b(u\!-\!U_\ell,u\!-\!U_\ell).
\end{align*}
Since $\enorm{v}^2 = a(v,v) = b(v,v)-\dual{\KK v}{v}$ for all $v\in\HH$, this translates to
\begin{align*}
 &\enorm{u-U_{\ell+1}}^2 + \enorm{U_{\ell+1}-U_\ell} ^2
 + \dual{\KK(u-U_{\ell+1})}{u-U_{\ell+1}}
 + \dual{\KK(U_{\ell+1}-U_\ell)}{U_{\ell+1}-U_\ell}
 \\&\quad
 + b(U_{\ell+1} - U_\ell,u - U_{\ell+1})
 = \enorm{u-U_{\ell}}^2 + \dual{\KK(u-U_\ell)}{u-U_\ell}.
\end{align*}
The remaining bilinear form $b(U_{\ell+1} - U_\ell,u - U_{\ell+1})$ is estimated as follows
\begin{eqnarray*}
	|b(U_{\ell+1} - U_\ell,u - U_{\ell+1})| & =& |\overline{a(u - U_{\ell+1},U_{\ell+1} - U_\ell)}  + \dual{\KK(U_{\ell+1} - U_\ell)}{u - U_{\ell+1}}| \notag\\
	& \stackrel{\eqref{eq:galerkin:orthogonality}}{=}& 
	 | -\overline{\dual{\KK(u - U_{\ell+1})}{U_{\ell+1} - U_\ell}} + \dual{\KK(U_{\ell+1} - U_\ell)}{u - U_{\ell+1}}| \notag\\
	 & \leq& 2\delta\, \norm{u - U_{\ell+1}}{\HH} \norm{U_{\ell+1} - U_\ell}{\HH}.
\end{eqnarray*}
With norm equivalence $\norm{v}\HH^2 \le C\,\enorm{v}^2$ for all $v\in\HH$, we thus see
\begin{align*}
 &(1-\delta C)\,\enorm{u-U_{\ell+1}}^2
 +(1-\delta C)\,\enorm{U_{\ell+1}-U_\ell} ^2
 \\&\quad
 \le (1+\delta C)\,\enorm{u-U_\ell}^2
 + 2\delta C\,\enorm{u - U_{\ell+1}} \enorm{U_{\ell+1} - U_\ell}.
\end{align*}
Finally, the Young inequality $2cab \le ca^2 + cb^2$ for all $a,b,c\ge0$, yields
\begin{align*}
 (1-2\delta C)\,\enorm{u-U_{\ell+1}}^2
 + (1-2\delta C)\,\enorm{U_{\ell+1}-U_\ell} ^2
 \le (1+\delta C)\,\enorm{u-U_\ell}^2.
\end{align*}
For sufficiently small $\delta>0$ and $\displaystyle\frac{1+\delta C}{1-2\delta C} \le \frac{1}{1-\eps}$, this proves~\eqref{eq:orthogonality}.
\end{proof}%

\def\qlin{q_{\rm lin}}
\def\Clin{C_{\rm lin}}
The following result was proved in~\cite{ffp} for strongly elliptic problems~\eqref{eq:ffp}. Here, we generalize the result by extending it to a more general class of problems. Our proof follows the ideas of~\cite{ckns}. 

\begin{theorem} \label{theorem:convergence}
Suppose~\eqref{axiom:stability}--\oooldRevision{\eqref{axiom:infty}} and $0<\theta\le1$. Then,
there exist constants $0<\qlin<1$ and $\Clin>0$ such that the output of Algorithm~\ref{algorithm} satisfies $\eta_{\ell+n}\le\Clin\qlin^n\,\eta_\ell$ for all $\ell,n\in\N_0$ \oooldRevision{with $\ell\ge\ell_3$}, \ooldRevision{where $\ell_3\in\N_0$ is the index from Lemma~\ref{lemma2:convergence}.}
\end{theorem}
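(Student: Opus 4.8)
The plan is to follow the by-now-standard CKNS-type argument: establish linear convergence of the combined quantity $\Delta_\ell := \enorm{u-U_\ell}^2 + \lambda\,\eta_\ell^2$ for a suitably small $\lambda>0$, and then transfer the decay to the estimator alone by means of reliability~\eqref{axiom:reliability:new}. Recall from Lemma~\ref{lemma:ell1} that the discrete solutions $U_\ell\in\XX_\ell$ exist for all large $\ell$; in particular they exist for all $\ell\ge\ell_3$, where $\ell_3$ denotes the index of Lemma~\ref{lemma2:convergence}, so that all quantities appearing below are well-defined on this range.

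\emph{Step 1 (the two building blocks).}\quad Fix $\eps\in(0,\tfrac12)$, to be chosen later, and let $\ell_3$ be the corresponding index from Lemma~\ref{lemma2:convergence}. For $\ell\ge\ell_3$, the quasi-orthogonality~\eqref{eq:orthogonality} reads
\begin{equation*}
 \enorm{u-U_{\ell+1}}^2 \le \frac{1}{1-\eps}\,\enorm{u-U_\ell}^2 - \enorm{U_{\ell+1}-U_\ell}^2 ,
\end{equation*}
while the generalized estimator reduction (Lemma~\ref{lemma:reduction}), applied with $\TT_\star=\TT_{\ell+1}$, gives $\eta_{\ell+1}^2\le\qest\,\eta_\ell^2+\Cest\,\norm{U_{\ell+1}-U_\ell}{\HH}^2$. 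Using norm equivalence $\norm{v}{\HH}^2\lesssim\enorm{v}^2$, I would fix $\lambda>0$ small enough (depending only on $\Cest$ and the norm-equivalence constant, in particular independent of $\eps$) so that $\lambda\,\Cest\,\norm{U_{\ell+1}-U_\ell}{\HH}^2\le\enorm{U_{\ell+1}-U_\ell}^2$. Adding $\lambda$ times the estimator reduction to the quasi-orthogonality, the difference terms cancel and one obtains
\begin{equation*}
 \Delta_{\ell+1}\le\frac{1}{1-\eps}\,\enorm{u-U_\ell}^2+\lambda\,\qest\,\eta_\ell^2 .
\end{equation*}

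\emph{Step 2 (contraction through reliability).}\quad Since $\tfrac{1}{1-\eps}>1$, this is not yet a contraction, and here reliability enters: \eqref{axiom:reliability:new} together with norm equivalence yields $\enorm{u-U_\ell}^2\le C_{\rm R}\,\eta_\ell^2$ with a constant $C_{\rm R}>0$ independent of $\ell$. For a second parameter $\xi\in(0,1)$, split $\tfrac{1}{1-\eps}\enorm{u-U_\ell}^2=\tfrac{1-\xi}{1-\eps}\enorm{u-U_\ell}^2+\tfrac{\xi}{1-\eps}\enorm{u-U_\ell}^2$ and bound the last summand by $\tfrac{\xi\,C_{\rm R}}{1-\eps}\eta_\ell^2$, so that
\begin{equation*}
 \Delta_{\ell+1}\le\frac{1-\xi}{1-\eps}\,\enorm{u-U_\ell}^2+\Big(\qest+\frac{\xi\,C_{\rm R}}{\lambda(1-\eps)}\Big)\lambda\,\eta_\ell^2 .
\end{equation*}
The parameters are then fixed in the order: first $\lambda$ (done in Step 1), then $\xi$ so small that $\qest+2\xi C_{\rm R}/\lambda<1$ (using $\eps<\tfrac12$), then $\eps<\xi$ so small that $\tfrac{1-\xi}{1-\eps}<1$ — which finally fixes $\ell_3$ via Lemma~\ref{lemma2:convergence}. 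With $\qlin^2:=\max\{\tfrac{1-\xi}{1-\eps},\,\qest+\tfrac{\xi C_{\rm R}}{\lambda(1-\eps)}\}\in(0,1)$ this gives $\Delta_{\ell+1}\le\qlin^2\,\Delta_\ell$ for all $\ell\ge\ell_3$, hence $\Delta_{\ell+n}\le\qlin^{2n}\,\Delta_\ell$ for all $\ell\ge\ell_3$ and $n\in\N_0$ by induction.

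\emph{Step 3 (back to the estimator) and the main obstacle.}\quad Finally, $\lambda\,\eta_{\ell+n}^2\le\Delta_{\ell+n}\le\qlin^{2n}\Delta_\ell=\qlin^{2n}\big(\enorm{u-U_\ell}^2+\lambda\,\eta_\ell^2\big)\le\qlin^{2n}(C_{\rm R}+\lambda)\,\eta_\ell^2$, using reliability once more, which is the asserted estimate with $\Clin:=\sqrt{(C_{\rm R}+\lambda)/\lambda}$. The only genuinely delicate point is the bookkeeping in Step 2: the choices $\lambda\to\xi\to\eps$ must be made in exactly this order so that no constant depends circularly on a later one, and one must keep track that the range $\ell\ge\ell_3$ in the statement is precisely the index furnished by the quasi-orthogonality Lemma~\ref{lemma2:convergence} for the final value of $\eps$. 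Everything else — the norm equivalences, the cancellation of the difference terms in Step 1, and the induction in Step 2 — is routine.
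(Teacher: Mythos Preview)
Your proposal is correct and follows essentially the same CKNS-type contraction argument as the paper: combine the quasi-orthogonality (Lemma~\ref{lemma2:convergence}) with the estimator reduction (Lemma~\ref{lemma:reduction}) to obtain contraction of the quasi-error $\Delta_\ell$, absorbing the excess factor $1/(1-\eps)$ via reliability~\eqref{axiom:reliability:new}, and then transfer the contraction back to $\eta_\ell$ using reliability once more. The only cosmetic difference is the parametrization of the splitting in Step~2 (the paper writes $\frac{1}{1-\eps}=\big(\frac{1}{1-\eps}-\delta\lambda\big)+\delta\lambda$ and feeds the $\delta\lambda$-piece into reliability, whereas you split off a factor $\xi$), and the paper absorbs the norm-equivalence constant at the outset by assuming $\norm{\cdot}{\HH}=\enorm{\cdot}$; the order of parameter choices and the resulting contraction are otherwise identical.
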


\begin{proof}
Due to norm equivalence $\norm\cdot\HH\simeq\enorm\cdot$, \oooldRevision{reliability~\eqref{axiom:reliability:new} and estimator reduction (Lemma~\ref{lemma:reduction})} also hold with respect to the $a(\cdot,\cdot)$-induced energy norm $\enorm\cdot$. To simplify the notation and without loss of generality, we therefore suppose $\norm\cdot\HH = \enorm\cdot$ throughout the proof.

\textit{Step~1:}
\quad In this step, we prove that there exist $0<\qlin,\ooldRevision{\lambda}<1$ and $\ell_3\in\N_0$ such that
\begin{align}\label{eq1:lemma2}
 \Delta_{\ell+1} \le \qlin\,\Delta_\ell
 \quad\text{for all }\ell\ge\ell_3,
 \quad\text{where}\quad
 \Delta_\star^2 := \enorm{u-U_\star}^2 + \ooldRevision{\lambda}\,\eta_\star^2.
\end{align}
Let $\eps,\ooldRevision{\lambda}>0$ be free parameters which are fixed later. With Lemma~\ref{lemma:reduction} and Lemma~\ref{lemma2:convergence}, we see for $\ell\ge\ell_3=\ell_3(\eps)$
\begin{align*}
 \Delta_{\ell+1}^2 &= \enorm{u-U_{\ell+1}}^2 + \ooldRevision{\lambda}\,\eta_{\ell+1}^2
 \le \frac{1}{1-\eps}\,\enorm{u-U_\ell}^2 + \ooldRevision{\lambda}\,\qest\,\eta_\ell^2+(\ooldRevision{\lambda}\Cest-1)\,\enorm{U_{\ell+1}-U_\ell}^2.
\end{align*}
For sufficiently small $\ooldRevision{\lambda}$ (i.e., $\ooldRevision{\lambda}\Cest\le1$) and an additional free parameter $\delta>0$, \oooldRevision{reliability~\eqref{axiom:reliability:new}} yields that
\begin{align*}
 \Delta_{\ell+1}^2
 \le \frac{1}{1-\eps}\,\enorm{u-U_\ell}^2 + \ooldRevision{\lambda}\,\qest\,\eta_\ell^2
 &\le\Big(\frac{1}{1-\eps}-\delta\ooldRevision{\lambda}\Big)\,\enorm{u-U_\ell}^2 + \ooldRevision{\lambda}(\qest+\Crel^2\delta)\,\eta_\ell^2
 \\&
 \le\max\Big\{\frac{1}{1-\eps}-\delta\ooldRevision{\lambda}\,,\,\qest+\Crel^2\delta\Big\}\,\Delta_\ell^2.
\end{align*}
Since $0<\qest<1$, we may choose $\delta>0$ sufficiently small such that $0<\qest+\Crel^2\delta<1$. Finally choose $\eps>0$ sufficiently small such that $0<1/(1-\eps)-\delta\ooldRevision{\lambda}<1$. This concludes~\eqref{eq1:lemma2}.

\textit{Step~2:}
We employ the notation of step~1. Induction on $n$ proves $\Delta_{\ell+n}\le\qlin^n\,\Delta_\ell$ for all $\ell\ge\ell_3$ and all $n\in\N_0$. Note that \oooldRevision{reliability~\eqref{axiom:reliability:new}} yields $\eta_\star^2 \simeq \Delta_\star^2$. Combining these two observations, we conclude the proof.
\end{proof}

\subsection{Validity of the C\'ea lemma}
\label{section:cea}%
In this section, we show that the discrete solutions computed in Algorithm~\ref{algorithm} are quasi-optimal in the sense of the C\'ea lemma.

\begin{theorem}\label{prop:cea}
Suppose~\eqref{axiom:stability}--\oooldRevision{\eqref{axiom:infty}} and $0<\theta\le1$. Then,
there exist $C_\ell\ge1$ with $\lim\limits_{\ell\to\infty}C_\ell = 1$ and $\ell_4>0$ such that
the output of Algorithm~\ref{algorithm} satisfies
\begin{align}\label{eq:cea}
 \enorm{u-U_\ell} \le C_\ell\,\min_{V_\ell\in\XX_\ell}\enorm{u-V_\ell}
 \quad\text{for all }\ell\ge\ell_4.
\end{align}
\end{theorem}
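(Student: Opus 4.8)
The plan is to derive the C\'ea estimate with asymptotically optimal constant from the elliptic C\'ea lemma for the principal part $a(\cdot,\cdot)$ together with a perturbation argument that uses the fact that $\KK$ turns the (weakly null) normalized error sequences from Lemma~\ref{lemma3:convergence} into strongly null sequences. First I would recall that by Proposition~\ref{lemma1:convergence} we have $u\in\XX_\infty$, and by Lemma~\ref{prop:new}(a) there is an index $\ell_2$ and $\gamma>0$ so that for all $\ell\ge\ell_2$ the Galerkin solution $U_\ell\in\XX_\ell$ exists and is unique. Fix $\ell\ge\ell_2$ and let $V_\ell\in\XX_\ell$ be arbitrary. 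Starting from the Galerkin orthogonality~\eqref{eq:galerkin:orthogonality}, $b(u-U_\ell,W_\ell)=0$ for all $W_\ell\in\XX_\ell$, I would write, using $\enorm{v}^2=a(v,v)=b(v,v)-\dual{\KK v}{v}$,
\begin{align*}
 \enorm{u-U_\ell}^2
 &= a(u-U_\ell,\,u-V_\ell) + a(u-U_\ell,\,V_\ell-U_\ell)\\
 &= a(u-U_\ell,\,u-V_\ell) - \dual{\KK(u-U_\ell)}{V_\ell-U_\ell},
\end{align*}
where the last step uses $a(u-U_\ell,V_\ell-U_\ell)=b(u-U_\ell,V_\ell-U_\ell)-\dual{\KK(u-U_\ell)}{V_\ell-U_\ell}=-\dual{\KK(u-U_\ell)}{V_\ell-U_\ell}$ by Galerkin orthogonality with $W_\ell:=V_\ell-U_\ell\in\XX_\ell$.

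Next I would estimate the two terms. The first is bounded by continuity of $a(\cdot,\cdot)$, which in the energy norm reads $|a(x,y)|\le\enorm{x}\enorm{y}$ (Cauchy--Schwarz for the inner product $a$), giving $\enorm{u-U_\ell}\,\enorm{u-V_\ell}$. For the second term, write $\KK(u-U_\ell)=\norm{u-U_\ell}{\HH}\,\KK e_\ell$ with $e_\ell$ as in Lemma~\ref{lemma3:convergence}; since $e_\ell\rightharpoonup 0$ and $\KK$ is compact, $\norm{\KK e_\ell}{\HH^*}\to 0$, so for any $\delta>0$ there is $\ell_4\ge\ell_2$ with $\norm{\KK e_\ell}{\HH^*}\le\delta$ for $\ell\ge\ell_4$; hence, using norm equivalence $\norm{v}{\HH}\le C_{\mathrm{eq}}\enorm{v}$,
\begin{align*}
 |\dual{\KK(u-U_\ell)}{V_\ell-U_\ell}|
 \le \delta\,\norm{u-U_\ell}{\HH}\,\norm{V_\ell-U_\ell}{\HH}
 \le \delta\,C_{\mathrm{eq}}^2\,\enorm{u-U_\ell}\,\enorm{V_\ell-U_\ell},
\end{align*}
and then $\enorm{V_\ell-U_\ell}\le\enorm{u-U_\ell}+\enorm{u-V_\ell}$ by the triangle inequality. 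Collecting terms yields
\begin{align*}
 \enorm{u-U_\ell}^2
 \le (1+\delta C_{\mathrm{eq}}^2)\,\enorm{u-U_\ell}\,\enorm{u-V_\ell}
 + \delta C_{\mathrm{eq}}^2\,\enorm{u-U_\ell}^2,
\end{align*}
and dividing by $\enorm{u-U_\ell}$ (trivial if it vanishes) gives $\enorm{u-U_\ell}\le\frac{1+\delta C_{\mathrm{eq}}^2}{1-\delta C_{\mathrm{eq}}^2}\,\enorm{u-V_\ell}$, provided $\delta C_{\mathrm{eq}}^2<1$. Since $V_\ell$ was arbitrary, this is~\eqref{eq:cea} with $C_\ell=\frac{1+\delta_\ell C_{\mathrm{eq}}^2}{1-\delta_\ell C_{\mathrm{eq}}^2}$, where $\delta_\ell:=\norm{\KK e_\ell}{\HH^*}\to 0$, so $C_\ell\to 1$ and $C_\ell\ge 1$; choosing $\ell_4$ large enough that $\delta_\ell C_{\mathrm{eq}}^2<1$ for $\ell\ge\ell_4$ completes the argument.

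The only genuine subtlety — the ``main obstacle'' — is making the perturbation term $o(1)$ \emph{uniformly in the competitor} $V_\ell$: this is exactly why one passes to the normalized error $e_\ell$ and invokes compactness of $\KK$, rather than trying to bound $\dual{\KK(u-U_\ell)}{\cdot}$ by the (non-vanishing-rate) quantity $\norm{u-U_\ell}{\HH}$ times an operator norm. Everything else is the standard C\'ea computation; one should just be a little careful that the second term in the expansion of $\enorm{u-U_\ell}^2$ is handled via Galerkin orthogonality for $b$ (not $a$), which is what produces the $\KK$-term that then decays.
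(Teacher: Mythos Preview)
Your proof is correct and essentially identical to the paper's own argument. The only cosmetic difference is that the paper writes the perturbation contribution as two terms, $\dual{\KK(u-U_\ell)}{u-V_\ell}-\dual{\KK(u-U_\ell)}{u-U_\ell}$, and bounds each separately, while you combine them into the single term $-\dual{\KK(u-U_\ell)}{V_\ell-U_\ell}$ and then apply the triangle inequality to $\enorm{V_\ell-U_\ell}$; algebraically these are the same, and both routes yield the identical C\'ea constant $C_\ell=\dfrac{1+C\,\norm{\KK e_\ell}{\HH^*}}{1-C\,\norm{\KK e_\ell}{\HH^*}}\to1$.
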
%

\begin{proof}
Consider the sequences $(e_\ell)$ and $(E_\ell)$ of Lemma~\ref{lemma3:convergence}. 
We follow the arguments of the proof of Lemma~\ref{lemma2:convergence}.
Let $V_\ell\in\XX_\ell$.
With the Galerkin orthogonality~\eqref{eq:galerkin:orthogonality}, it holds
\begin{eqnarray*}
 \enorm{u-U_\ell}^2
 &\!=\!& b(u-U_\ell,u-U_\ell) - \dual{\KK(u-U_\ell)}{u-U_\ell}
 \\
 &\!\!\!\stackrel{\eqref{eq:galerkin:orthogonality}}=\!\!\!& b(u-U_\ell,u-V_\ell) - \dual{\KK(u-U_\ell)}{u-U_\ell}
 \\
 &\!=\!& a(u-U_\ell,u-V_\ell) + \dual{\KK(u-U_\ell)}{u-V_\ell} - \dual{\KK(u-U_\ell)}{u-U_\ell}
 \\ 
 &\!\le\!& \enorm{u-U_\ell}\enorm{u-V_\ell}
 + \norm{\KK e_\ell}{\HH^*}\norm{u-U_\ell}\HH\norm{u-V_\ell}\HH+\norm{\KK e_\ell}{\HH^*}\norm{u-U_\ell}\HH^2.
\end{eqnarray*}
With norm equivalence $\norm{v}\HH^2 \le C\,\enorm{v}^2$ for all $v\in\HH$, we thus see
\begin{align*}
 \enorm{u-U_\ell}
 \le (1+C\,\norm{\KK e_\ell}{\HH^*})\,\enorm{u-V_\ell} + C\,\norm{\KK e_\ell}{\HH^*}\enorm{u-U_\ell}.
\end{align*}
Rearranging this estimate, we prove
\begin{align*}
 \enorm{u-U_\ell} \le \frac{1+C\,\norm{\KK e_\ell}{\HH^*}}{1-C\,\norm{\KK e_\ell}{\HH^*}}\,\enorm{u-V_\ell}
\end{align*}
and conclude~\eqref{eq:cea}, since $\norm{\KK e_\ell}{\HH^*}\to0$ as $\ell\to\infty$. 
\end{proof}%

\section{Optimal Convergence Rates}
\label{section:rates}

\subsection{Fine properties of mesh-refinement}
\label{section2:mesh}%
The proof of optimal convergence rates requires further properties of the mesh-refinement. First, we suppose
that each refined element is split in at most $\Cson$ and at least 2 sons. In particular, it holds
\begin{align}\label{mesh:sons}
\# (\TT_\star \setminus \TT_\plus) + \# \TT_\star \leq \# \TT_\plus \quad \text{for all} \,  \TT_\star \in \T \text{ and all } \TT_\plus \in \refine(\TT_\star).
\end{align}
Second, we require the mesh-closure estimate
\begin{align}\label{mesh:mesh-closure}
\# \TT_\ell - \# \TT_0 \leq \Cmesh \sum_{j =0}^{\ell -1} \# \MM_j \quad \text{for all} \, \ell \in \N,
\end{align}
where the constant $\Cmesh\ge1$ depends only on the initial mesh $\TT_0$. 
Finally, we need the overlay estimate, i.e., for all triangulations $\TT\in\T$ and all $ \TT_\plus, \TT_\star \in \refine(\TT)$ there
exists a common refinement $\TT_\plus  \oplus \TT_\star \in \refine(\TT_\plus) \cap \refine(\TT_\star)\subseteq\refine(\TT)$ which satisfies
\begin{align}\label{mesh:overlay}
 \# (\TT_\plus \oplus \TT_\star) \leq \# \TT_\plus + \# \TT_\star - \# \TT.
\end{align}
For newest vertex bisection (NVB), the mesh-closure estimate has first been proved for $d=2$ in~\cite{bdd} and later for $d \geq 2$ in \cite{stevenson08}. 
While both works require an additional admissibility assumption on $\TT_0$, \cite{kpp} proved that this condition is unnecessary for $d=2$. 
The proof of the overlay estimate is found in \cite{ckns,stevenson07}. 
We note that NVB ensures $2\le\Cson<\infty$, where $\Cson$ depends only on $\TT_0$ and $d$; see~\cite{gss}. For $d=2$, it holds $\Cson=4$ (see, e.g.,~\cite{kpp}). For other mesh-refinement strategies than NVB which satisfy~\eqref{mesh:sons}--\eqref{mesh:overlay}, we refer to~\cite{bn,mp} as well as to~\cite[Section~2.5]{axioms}.

\begin{lemma}\label{lemma:m}
\ooldRevision{NVB guarantees the following \oldRevision{properties~{\rm(a)--(c)}} which are exploited in our analysis of optimal convergence rates:}
\begin{itemize}
\item[\oldRevision{\rm(a)}] There exists $m \in \N$ such that the $m$-times uniform refinement $\widehat\TT_0$ of $\TT_0$ satisfies the \oooldRevision{assertions of Lemma~\ref{prop:new} (with \ooldRevision{$\TT_{\ell_2}$} replaced by $\widehat\TT_0$). In particular, there holds the quasi-monotonicity of the estimator, i.e., \ooldRevision{there exists an independent constant $\Cmon>0$ such that}
\begin{align*}
 \eta_\bullet \le \Cmon\,\eta_\star
 \ooldRevision{\quad\text{for all }\TT_\star\in\T
 \text{ and all }\TT_\bullet\in\refine(\widehat\TT_0)\cap\refine(\TT_\star),}
\end{align*}
\ooldRevision{provided that the Galerkin solution $U_\star\in\XX_\star$ exists}.}
\item[\oldRevision{\rm(b)}]
Moreover, for all $\TT_\star\in\T$, the $m$-times uniform refinement $\widehat\TT_\star$ of $\TT_\star$ guarantees $\widehat\TT_\star\in\refine(\widehat\TT_0)$ and $\# \widehat{\TT}_\star \leq  \Cson^m \# \TT_\star$.
\item[\oldRevision{\rm(c)}] \ooldRevision{Suppose that $\XX_\infty = \overline{\bigcup_{\ell=0}^\infty\XX_\ell} = \HH$ (e.g., the expanded D\"orfler marking strategy from Proposition~\ref{prop:doerfler} is used). Then, there exists an index $\ell_5\in\N_0$ such that $\TT_{\ell}\in\refine(\widehat\TT_0)$ for all $\ell\ge\ell_5$.}
\end{itemize}
\end{lemma}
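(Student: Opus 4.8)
The plan is to establish the three assertions \textrm{(a)--(c)} in turn, reducing \textrm{(a)} and \textrm{(b)} to results that are already available and reserving the actual work for \textrm{(c)}, which I regard as the main obstacle.

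For \textrm{(a)}, rather than re-running the argument of Lemma~\ref{prop:new}, I would apply Corollary~\ref{cor:uniform} to the sequence of iterated \emph{uniform} refinements (i.e.\ $\widehat\MM_\ell = \widehat\TT_\ell$ in every step). This produces an index $m\in\N$ and a constant $\gamma>0$ such that \emph{every} discrete space $\XX_\bullet\subset\HH$ containing the space $\widehat\XX_m$ associated with the $m$-fold uniform refinement of $\TT_0$ has $\inf$-$\sup$ constant $\gamma_\bullet\ge\gamma$. Declaring $\widehat\TT_0$ (in the notation of the lemma) to be this $m$-fold uniform refinement, any $\TT_\bullet\in\refine(\widehat\TT_0)$ satisfies $\XX_\bullet\supseteq\widehat\XX_m$, hence admits a unique Galerkin solution obeying the C\'ea estimate~\eqref{eq:sauterschwab_cea} with a uniform constant. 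Uniform discrete reliability then follows directly from~\eqref{axiom:reliability} upon bounding $\gamma_\bullet^{-1}\le\gamma^{-1}$, and inserting this into \cite[Lemma~3.5]{axioms} yields the quasi-monotonicity $\eta_\bullet\le\Cmon\eta_\star$ whenever the Galerkin solution $U_\star$ exists. This reproduces the assertions of Lemma~\ref{prop:new} with $\TT_{\ell_2}$ replaced by $\widehat\TT_0$; the point of going through Corollary~\ref{cor:uniform} (which works with the full space $\HH$) instead of $\XX_\infty$ is that $m$, $\gamma$, and $\Cmon$ then depend only on $\TT_0$ and the problem data, not on the particular run of Algorithm~\ref{algorithm}.

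For \textrm{(b)}, the two claims are standard facts about newest vertex bisection. The inclusion $\widehat\TT_\star\in\refine(\widehat\TT_0)$ expresses that $m$-fold uniform refinement is compatible with the refinement order: since $\TT_\star\in\refine(\TT_0)$, refining every element of $\TT_\star$ another $m$ times stays below the $m$-fold uniform refinement of $\TT_0$ in the bisection forest; I would cite~\cite{stevenson08}. For the cardinality bound, a single uniform refinement step splits each element into at most $\Cson$ sons, hence multiplies $\#(\cdot)$ by a factor at most $\Cson$; iterating $m$ times gives $\#\widehat\TT_\star\le\Cson^m\,\#\TT_\star$.

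Assertion \textrm{(c)} is where the argument has some content, since it links the ``soft'' density hypothesis to a combinatorial statement about the \emph{fixed finite} mesh $\widehat\TT_0$. I would proceed in two steps. First, the hypothesis $\XX_\infty=\HH$ forces $\norm{h_\ell}{L^\infty(\Omega)}\to 0$: if some element $T$ of the bisection forest were contained in $\TT_\ell$ for all large $\ell$, then $\XX_\infty$ restricted to $T$ would be the finite-dimensional space $\PP^p(T)$, contradicting density in $\HH=H^1_0(\Omega)$; quantitatively, only finitely many elements of the forest have size $\ge\eps$ and each is eventually refined, so $\norm{h_\ell}{L^\infty(\Omega)}^d<\eps$ for $\ell$ large. (This is also exactly what the expanded D\"orfler marking of Proposition~\ref{prop:doerfler} delivers directly, which is the ``e.g.'' case in the statement.) Second, I would use that $\TT_\ell$ and $\widehat\TT_0$ are both newest-vertex-bisection refinements of $\TT_0$: any $T\in\TT_\ell$ and any $S\in\widehat\TT_0$ with overlapping interiors are nodes of the binary tree rooted at a common initial element $T_0\in\TT_0$ and are therefore comparable, i.e.\ $T\subseteq S$ or $S\subsetneq T$. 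Setting $c_0:=\min_{S\in\widehat\TT_0}|S|>0$, the second alternative forces $|T|>|S|\ge c_0$ and is thus excluded once $\norm{h_\ell}{L^\infty(\Omega)}\le c_0^{1/d}$; in that regime every $S\in\widehat\TT_0$ is a union of elements of $\TT_\ell$, that is $\TT_\ell\in\refine(\widehat\TT_0)$. Choosing $\ell_5$ so that $\norm{h_\ell}{L^\infty(\Omega)}\le c_0^{1/d}$ for all $\ell\ge\ell_5$ completes the proof. The one point that needs to be stated carefully, rather than being a routine calculation, is the local comparability of two newest-vertex-bisection meshes inside a shared coarse element; everything else is bookkeeping.
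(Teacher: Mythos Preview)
Your argument is correct and matches the paper's approach: part~(a) via Corollary~\ref{cor:uniform} together with the reasoning of Lemma~\ref{prop:new}, and parts~(b)--(c) from the binary-tree structure of NVB (where the order of bisections does not matter, cf.~\cite{stevenson08}). For~(c) you supply considerably more detail than the paper's one-line proof---in particular the deduction $\XX_\infty=\HH\Rightarrow\norm{h_\ell}{L^\infty(\Omega)}\to0$ and the subsequent volume-comparison argument---but the underlying mechanism is the same.
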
%

\begin{proof}
\oldRevision{Assertion~(a) is} a direct consequence of Corollary~\ref{cor:uniform}, 
\oooldRevision{if we argue as in the proof of Lemma~\ref{prop:new}.} \oldRevision{Assertions~(b)--(c)} follow from the fact that NVB is based on a binary refinement rule, where the order of the refinements does not matter~\cite{stevenson08}.
\end{proof}%

\subsection{Approximation classes}
\label{section:classes}
For $N\in\N_0$ and $\TT\in\T$, we define 
\begin{align}\label{def:TN}
\T_N(\TT) := \set{\TT_\star \in \refine(\TT)}{\#\TT_\star - \# \TT \leq N\text{ and solution $U_\star\in\XX_\star$ to~\eqref{eq:discreteform} exists}}.
\end{align}
We note that $\T_N(\TT)$ is finite, but may be empty. However, according to Lemma~\ref{lemma:m}, it holds $\T_N(\TT)\neq\emptyset$ for all sufficiently large $N$, e.g., $N\ge\Cson^m\#\TT$. We use the convention $\min_{\TT_\star \in \T_N(\TT)} \eta_\star = 0$, if $\T_N(\TT)=\emptyset$. For $s>0$, we then define
\begin{align}\label{eq:approximatoin_class}
	\norm{u}{\A_s(\TT)} := \sup_{N \in \N_0} \Big( (N+1)^s \min_{\TT_\star \in \T_N(\TT)} \eta_\star \Big),
\end{align}
where $\eta_\star$ is the error estimator corresponding to the optimal triangulation $\TT_\star \in \T_N(\TT)$.
Note that $\norm{u}{\A_s(\TT)}<\infty$ means that starting from $\TT$, a convergence behaviour of $\eta_\star = \OO\big((\#\TT_\star)^{-s}\big)$ is possible, if the optimal meshes are chosen. To abbreviate notation, we let
\begin{align}
 \T_N := \T_N(\TT_0)
 \quad\text{and}\quad
 \norm{u}{\A_s} := \norm{u}{\A_s(\TT_0)}.
\end{align}%

\begin{lemma}\label{lemma:noe}
For all $\TT\in\T$ and $\TT_\plus\in\refine(\TT)$, it holds
\begin{align}\label{eq:noe}
 \#\TT_\plus-\#\TT + 1 \le \#\TT_\plus \le \#\TT\,(\#\TT_\plus-\#\TT+1).
\end{align}
\end{lemma}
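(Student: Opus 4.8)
The plan is to prove the two inequalities in~\eqref{eq:noe} separately, using only elementary counting together with the structural property~\eqref{mesh:sons} of the refinement strategy.

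First I would establish the lower bound $\#\TT_\plus - \#\TT + 1 \le \#\TT_\plus$. This is immediate: since $\TT\in\T$ is a genuine triangulation, we have $\#\TT\ge1$, hence $-\#\TT+1\le0$ and the claim follows by adding $\#\TT_\plus$ to both sides. No refinement structure is needed here; only that meshes are nonempty.

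For the upper bound $\#\TT_\plus \le \#\TT\,(\#\TT_\plus - \#\TT + 1)$, I would first dispose of the trivial case $\TT_\plus = \TT$ (since $\TT\in\refine(\TT)$ is allowed), where both sides equal $\#\TT$. In the nontrivial case $\TT_\plus\ne\TT$, at least one element has been refined, so $\#(\TT\setminus\TT_\plus)\ge1$ and, by~\eqref{mesh:sons}, $\#(\TT\setminus\TT_\plus) + \#\TT \le \#\TT_\plus$, which in particular gives $\#\TT_\plus - \#\TT \ge \#(\TT\setminus\TT_\plus) \ge 1$, i.e.\ $\#\TT_\plus - \#\TT + 1 \ge 2$. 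Now write $\#\TT_\plus = \#\TT + (\#\TT_\plus - \#\TT)$ and set $k := \#\TT_\plus - \#\TT \ge 1$; since $\#\TT\ge1$ we have $\#\TT_\plus = \#\TT + k \le \#\TT + \#\TT\cdot k = \#\TT\,(k+1) = \#\TT\,(\#\TT_\plus - \#\TT + 1)$, using $k \le \#\TT\cdot k$. This closes the argument.

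The main obstacle, such as it is, is purely bookkeeping: one must be careful that the inequality $\#\TT_\plus - \#\TT \ge 1$ genuinely holds whenever $\TT_\plus\ne\TT$, which is exactly what~\eqref{mesh:sons} delivers (each refined element produces at least two sons, hence strictly increases the count); and one must handle the degenerate case $\TT_\plus=\TT$ so that the product bound is not vacuously false. Beyond that the estimate is a one-line manipulation, so I do not anticipate any real difficulty.
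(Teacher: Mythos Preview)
Your proof is correct and follows essentially the same elementary route as the paper: both arguments reduce the upper bound to the two trivial facts $\#\TT\ge1$ and $\#\TT_\plus\ge\#\TT$. The paper's version is slightly more streamlined---it verifies the algebraic identity $(\#\TT_\plus-\#\TT+1)-\#\TT_\plus/\#\TT=(\#\TT_\plus-\#\TT)(1-1/\#\TT)\ge0$ and rearranges---thereby avoiding your case split and the appeal to~\eqref{mesh:sons}, which is in fact unnecessary since your inequality $k\le\#\TT\cdot k$ already holds for every $k\ge0$ once $\#\TT\ge1$.
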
%

\begin{proof}
Note that
$\big(\#\TT_\plus - \#\TT + 1\big)-\#\TT_\plus/\#\TT
 = \big(\#\TT_\plus-\#\TT\big)\big(1-1/\#\TT\big) \ge0.$
Rearranging the terms, we conclude the upper bound in~\eqref{eq:noe}, while the lower bound is obvious.
\end{proof}%

\begin{lemma}\label{lemma:As}
There  
exists $C_3>0$ which depends 
only on $\Cson$, $m$ from Lemma~\ref{lemma:m}, and $\TT_0$, such that for all $s>0$ and all $\TT\in\T$, it holds
\begin{align}\label{eq1:As}
 \sup_{N\ge C_3\#\TT}\Big((N+1)^s\,\min_{\TT_\star\in\T_N}\eta_\star\Big)\le2^s\,\norm{u}{\A_s(\TT)}
\end{align}
as well as 
\begin{align}\label{eq2:As}
 \sup_{N\ge C_3\#\TT}\Big((N+1)^s\,\min_{\TT_\star\in\T_N(\TT)}\eta_\star\Big)
 \le \Cmon\,2^s\,\norm{u}{\A_s}.
\end{align}
In particular, there holds equivalence
\begin{align}\label{eq3:As}
 \norm{u}{\A_s(\TT)} < \infty
 \quad\Longleftrightarrow\quad
 \norm{u}{\A_s} < \infty.
\end{align}
\end{lemma}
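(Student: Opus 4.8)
The plan is to prove the two displayed bounds \eqref{eq1:As} and \eqref{eq2:As}; the equivalence \eqref{eq3:As} then follows by combining them with the trivial observations that $\norm{u}{\A_s}\le C\,\norm{u}{\A_s(\TT)}$ requires controlling only finitely many values of $N$ (those with $N<C_3\#\TT$), each of which contributes a finite term because $\T_N(\TT_0)$ is finite and, for $N$ large enough, nonempty by Lemma~\ref{lemma:m}. The core of the argument is a ``relating the two approximation classes'' trick of the type found in \cite[Section~4]{axioms}: starting from a near-optimal mesh for one of the two classes, I build a competitor mesh for the other class whose element count is controlled via the overlay estimate \eqref{mesh:overlay} and Lemma~\ref{lemma:noe}, and whose estimator is controlled via the quasi-monotonicity from Lemma~\ref{lemma:m}(a).

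For \eqref{eq1:As}: fix $N\ge C_3\#\TT$ and pick $\TT_\star\in\T_N(\TT_0)=\T_N$ realizing $\min_{\TT_\star\in\T_N}\eta_\star$ (if $\T_N=\emptyset$ there is nothing to prove). Form the overlay $\TT_\star\oplus\TT$, which lies in $\refine(\TT)$ and, by \eqref{mesh:overlay}, satisfies $\#(\TT_\star\oplus\TT)-\#\TT\le \#\TT_\star-\#\TT_0\le N$. Writing $N':=\#(\TT_\star\oplus\TT)-\#\TT\le N$, we have $\TT_\star\oplus\TT\in\T_{N'}(\TT)$ (the Galerkin solution exists on $\TT_\star\oplus\TT$ because it exists on the coarser $\TT_\star$ and Lemma~\ref{prop:new}(a)/Corollary~\ref{cor:uniform}-type reasoning, or more directly: $\TT_\star\oplus\TT\in\refine(\TT_\star)$ so once the problem is solvable on $\TT_\star$ it is solvable on any refinement that contains $\widehat\TT_0$; since $C_3$ is chosen $\ge\Cson^m\cdot(\text{something})$ this is guaranteed). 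Hence
\begin{align*}
(N+1)^s\min_{\TT_\sharp\in\T_N(\TT)}\eta_\sharp
\le (N+1)^s\,\eta_{\TT_\star\oplus\TT}
\le (N+1)^s\,\Cmon\,\eta_\star,
\end{align*}
where the last step uses quasi-monotonicity (Lemma~\ref{lemma:m}(a)) along the refinement $\TT_\star\to\TT_\star\oplus\TT$. Actually for \eqref{eq1:As} the cleaner route is the reverse direction: given $N\ge C_3\#\TT$, choose an optimal $\TT_\star\in\T_N(\TT)$ for the right-hand class and overlay with nothing---rather, the point is that $\refine(\TT)\subseteq\refine(\TT_0)=\T$ and $\#\TT_\star-\#\TT_0=(\#\TT_\star-\#\TT)+(\#\TT-\#\TT_0)\le N+\#\TT\le 2N$ when $N\ge\#\TT$; so $\TT_\star\in\T_{2N}$ and $(N+1)^s\eta_\star\le 2^s(2N+1)^s\eta_\star\le 2^s\norm{u}{\A_s}$, after noting $\min_{\T_{2N}}\eta\le\eta_\star$. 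This gives \eqref{eq1:As} directly, using only $N\ge C_3\#\TT\ge\#\TT$ and monotonicity of the $\T_M$-minimum in $M$.

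For \eqref{eq2:As}: this is the genuinely nontrivial direction and I expect it to be the main obstacle. Given $N\ge C_3\#\TT$, pick $\TT_\star\in\T_{N_0}$ optimal for $\norm{u}{\A_s}$ at scale $N_0:=\lfloor N/\#\TT\rfloor$ or so. Overlay with $\TT$ to get $\TT_\sharp:=\TT_\star\oplus\TT\in\refine(\TT)$; then $\#\TT_\sharp-\#\TT\le \#\TT_\star-\#\TT_0\le N_0$, and crucially, by Lemma~\ref{lemma:noe} applied with the pair $(\TT_0,\TT_\star)$ we can convert the ``$\#\TT_\star-\#\TT_0$'' budget into a ``$\#\TT_\star$'' budget, and then the factor $\#\TT$ in Lemma~\ref{lemma:noe} absorbs into the requirement $N\ge C_3\#\TT$. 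One must also ensure solvability on $\TT_\sharp$: since $\TT_\sharp\in\refine(\TT)\subseteq\T$ and (after possibly enlarging $C_3$ to force $\TT_\sharp\in\refine(\widehat\TT_0)$ via Lemma~\ref{lemma:m}) the Galerkin problem is uniformly solvable there, so $\TT_\sharp\in\T_{N'}(\TT)$ with $N'\le N$. Then quasi-monotonicity along $\TT_\star\to\TT_\sharp$ (Lemma~\ref{lemma:m}(a)) gives $\eta_{\TT_\sharp}\le\Cmon\,\eta_\star$, and assembling the powers of $N$ with the $2^s$ slack from Lemma~\ref{lemma:noe} (where $\#\TT_\star\le\#\TT_0(\#\TT_\star-\#\TT_0+1)\le 2\#\TT_0\cdot N_0$ roughly) yields \eqref{eq2:As}. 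The bookkeeping of how $C_3$ must be chosen---large enough that (i) $\T_N(\TT)\ne\emptyset$, (ii) every overlay mesh in sight refines $\widehat\TT_0$ so quasi-monotonicity applies, and (iii) the $\#\TT$-factor from Lemma~\ref{lemma:noe} is dominated---is the fiddly part, but it is exactly parallel to \cite[Lemma~4.7 / proof of optimality]{axioms}, so I would cite that structure and carry out the three steps explicitly.
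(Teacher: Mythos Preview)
Your overall strategy---inclusion $\refine(\TT)\subseteq\refine(\TT_0)$ for \eqref{eq1:As}, overlay plus quasi-monotonicity for \eqref{eq2:As}---is the paper's. But the execution of \eqref{eq2:As} has two genuine gaps.

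First, overlaying the optimal $\TT_\star\in\T_{N_0}$ with $\TT$ alone does \emph{not} ensure $\TT_\sharp=\TT_\star\oplus\TT\in\refine(\widehat\TT_0)$, and ``enlarging $C_3$'' cannot repair this: $C_3$ is merely a lower bound on $N$, and neither $\TT$ nor $\TT_\star$ need refine $\widehat\TT_0$ regardless of how large $N$ is. Without $\TT_\sharp\in\refine(\widehat\TT_0)$ you can invoke neither solvability nor quasi-monotonicity from Lemma~\ref{lemma:m}(a). The paper fixes this by overlaying $\TT_+$ not with $\TT$ but with $\widehat\TT$, the $m$-times uniform refinement of $\TT$; then $\TT_\bullet=\widehat\TT\oplus\TT_+\in\refine(\widehat\TT)\subseteq\refine(\widehat\TT_0)$ by Lemma~\ref{lemma:m}(b), and the extra cost $\#\widehat\TT\le \Cson^m\,\#\TT=:C$ is precisely what determines $C_3=2\Cson^m$.

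Second, the budget $N_0=\lfloor N/\#\TT\rfloor$ is far too small: the ratio $(N+1)/(N_0+1)$ is then of order $\#\TT$, so you would obtain a factor $(\#\TT)^s$ rather than the claimed $2^s$. Lemma~\ref{lemma:noe} does not rescue this---it plays no role in the paper's proof of Lemma~\ref{lemma:As} (it enters only later, in Theorem~\ref{theorem:optimal}). The paper instead reparametrises: for arbitrary $N\ge0$ it picks the optimal $\TT_+\in\T_{C+N}$, shows $\TT_\bullet\in\T_{2C+N}(\TT)$ via the overlay estimate, and uses $(2C+N+1)/(C+N+1)\le2$. This yields \eqref{eq2:As} for all indices $\ge 2C=C_3\#\TT$.

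For \eqref{eq1:As} your ``cleaner route'' is indeed the paper's idea (any $\TT_\star\in\T_{N'}(\TT)$ automatically lies in $\T_{N'+\#\TT}$, no overlay or $\Cmon$ needed), but your displayed chain ``$(N+1)^s\eta_\star\le 2^s(2N+1)^s\eta_\star\le 2^s\norm{u}{\A_s}$'' is garbled: the first inequality is in the wrong direction for what you need, and the final norm must be $\norm{u}{\A_s(\TT)}$, not $\norm{u}{\A_s}$. You also skip the nonemptiness of $\T_{N'}(\TT)$, which the paper secures via $\widehat\TT\in\T_C(\TT)$.
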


\begin{proof}
\emph{Step 1: The estimates~\eqref{eq1:As}--\eqref{eq2:As} imply~\eqref{eq3:As}.}\quad
For any $M>0$, the sets $\bigcup_{N=0}^M\T_N$ and $\bigcup_{N=0}^M\T_N(\TT)$ are finite. Hence, \eqref{eq1:As} provides an upper bound to $\norm{u}{\A_s}$ in terms of $\norm{u}{\A_s(\TT)}$, up to some finite summand which depends on $M=C_3\#\TT-1$. Therefore, $\norm{u}{\A_s(\TT)}<\infty$ implies $\norm{u}{\A_s}<\infty$. The converse implication follows analogously.

\emph{Step 2: Verification of~\eqref{eq1:As}.}\quad
Let $N\ge0$. Apply Lemma~\ref{lemma:m} to see that the $m$-times uniform refinement $\widehat\TT$ of $\TT$ satisfies $\#\TT\le\#\widehat\TT\le \Cson^m\#\TT =: C$ and $\widehat\TT\in\T_C\subseteq\T_{C+N}(\TT)$, i.e., $\T_{C+N}(\TT)\neq\emptyset$. Choose $\TT_\plus\in\T_{C+N}(\TT)$ with $\eta_\plus = \min_{\TT_\star\in\T_{C+N}(\TT)}\eta_\star$. Then, we estimate
\begin{align*}
 \#\TT_\plus-\#\TT_0 = (\#\TT_\plus-\#\TT)+(\#\TT-\#\TT_0) \le (C+N) + \#\TT \le 2C+N,
\end{align*}
i.e., $\TT_\plus \in \T_{2C+N}$. By choice of $\TT_\plus\in\T_{C+N}(\TT)$ and the definition of $\norm{u}{\A_s(\TT)}$, it follows
\begin{align*}
 (2C+N+1)^s\,\min_{\TT_\star\in\T_{2C+N}}\eta_\star\le\Big(\frac{2C+N+1}{C+N+1}\Big)^s\,(C+N+1)^s\,\eta_\plus 
 \le2^s\,\norm{u}{\A_s(\TT)}.
\end{align*}
Since this estimate holds for all $N\ge0$, \oooldRevision{we obtain}~\eqref{eq1:As} with $C_3=2\Cson^m$.

\emph{Step 3: Verification of~\eqref{eq2:As}.}\quad
Let $N\ge0$. Adopt the notation from step~2 and recall that $\widehat\TT\in\T_C\subseteq\T_{C+N}$. Choose $\TT_+\in\T_{C+N}$ with $\eta_+ = \min_{\TT_\star\in\T_{C+N}}\eta_\star$. Define $\TT_\plus:=\widehat\TT\oplus\TT_+$ to ensure that the discrete solution $U_\plus\in\XX_\plus$ exists. Then,
\begin{align*}
 \#\TT_\plus - \#\TT \stackrel{\eqref{mesh:overlay}}\le (\#\widehat\TT+\#\TT_+-\#\TT_0)-\#\TT
 \le \#\widehat\TT + C+N
 \le 2C+N,
\end{align*}
i.e., $\TT_\plus\in\T_{2C+N}(\TT)$. Moreover, \oooldRevision{quasi-monotonicity of the estimator (Lemma~\ref{lemma:m}) yields}
\begin{align*}
 (2C+N+1)^s\,\min_{\TT_\star\in\T_{2C+N}(\TT)}\eta_\star&\le(2C+N+1)^s\eta_\plus 
 \\&\le \Cmon\,\Big(\frac{2C+N+1}{C+N+1}\Big)^s\,(C+N+1)^s\,\eta_+
 \le \Cmon\,2^s\,\norm{u}{\A_s}.
\end{align*}
Since this estimate holds for all $N\ge0$, \oooldRevision{we obtain}~\eqref{eq2:As} again with $C_3=2\Cson^m$.
\end{proof}

\def\osc{{\rm osc}}%
\def\E{\mathbb E}%
In the spirit of~\cite{ckns}, one can also consider approximation classes based on the so-called \emph{total error}. Suppose that the Galerkin solution $U_\star\in\XX_\star$ of~\eqref{eq:discreteform} exists. Suppose that $\osc_\star:\XX_\star\to\R$ are so-called oscillation terms  such that the error estimator is reliable and efficient in the sense of
\begin{align}\label{eq:osc}
 \Crel^{-1}\,\norm{u-U_\star}\HH \le \eta_\star
 \le \Ceff\,\big(\norm{u-U_\star}\HH + \osc_\star(U_\star)\big).
\end{align}
Then,~\cite{ckns} considers
\begin{align}\label{eq2:approximatoin_class}
	\norm{u}{\E_s(\TT)} := \sup_{N \in \N_0} \Big( (N+1)^s \min_{\substack{\TT_\star \in \refine(\TT)\\\#\TT_\star-\#\TT\le N}} \inf_{V_\star\in\XX_\star}\big(\norm{u-V_\star}\HH + \osc_\star(V_\star)\big) \Big)
	\quad\text{for $\TT\in\T$}.
\end{align}
Note that the definition of $\norm{u}{\E_s(\TT)}$ also involves meshes for which the existence of the discrete solution may fail.
Adapting~\cite[Theorem~4.4]{axioms}, we derive the following result which states that the total error (starting from some arbitrary initial mesh $\TT$) converges with the same algebraic rate as the error estimator.

\begin{lemma}\label{lemma:osc}
Let $\osc_\star:\XX_\star\to\R$ satisfy~\eqref{eq:osc}. Suppose that there exists $C_{\rm osc}>0$ such that for all $\TT_\star\in\T$ for which the discrete solution $U_\star\in\XX_\star$ of~\eqref{eq:discreteform} exists, it holds the following:
\begin{itemize}
\item $\osc_\star := \osc_\star(U_\star) \le C_{\rm osc}\,\eta_\star$,
\item $C_{\rm osc}^{-1}\,\osc_\star(V_\star) \le \osc_\star(W_\star) + \norm{V_\star-W_\star}\HH$
for all $V_\star,W_\star\in\XX_\star$.
\end{itemize}
Then, for all $s>0$ and all $\TT\in\T$, it holds
\begin{align*}
 \norm{u}{\E_s(\TT)}<\infty
 \quad\Longleftrightarrow\quad
 \norm{u}{\A_s}<\infty.
\end{align*}
\end{lemma}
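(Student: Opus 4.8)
The plan is to compare the two classes over the \emph{same} base mesh: since Lemma~\ref{lemma:As} already gives $\norm{u}{\A_s(\TT)}<\infty \iff \norm{u}{\A_s}<\infty$, it suffices to show $\norm{u}{\E_s(\TT)}<\infty \iff \norm{u}{\A_s(\TT)}<\infty$. The guiding observation is that on \emph{any} mesh $\TT_\star$ for which the Galerkin solution $U_\star\in\XX_\star$ exists, the estimator is equivalent to the total error: combining efficiency--reliability~\eqref{eq:osc}, the bound $\osc_\star(U_\star)\le C_{\rm osc}\,\eta_\star$, the uniform C\'ea lemma, and the two oscillation hypotheses together with the triangle inequality in $\norm{\cdot}{\HH}$, one gets $\eta_\star\simeq\norm{u-U_\star}{\HH}+\osc_\star(U_\star)\simeq\inf_{V_\star\in\XX_\star}\big(\norm{u-V_\star}{\HH}+\osc_\star(V_\star)\big)$. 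Thus the only genuine difference between the two classes is that $\A_s(\TT)$ counts only meshes admitting a discrete solution. For the direction $\norm{u}{\A_s(\TT)}<\infty\Rightarrow\norm{u}{\E_s(\TT)}<\infty$, I would, for each $N$ with $\T_N(\TT)\neq\emptyset$, take the $\A_s(\TT)$-optimal $\TT_\star\in\T_N(\TT)$ (so $U_\star$ exists) and insert $V_\star:=U_\star$ into the infimum defining $\norm{u}{\E_s(\TT)}$ (admissible, since $\T_N(\TT)$ is among the competing meshes there); then $\norm{u-U_\star}{\HH}+\osc_\star(U_\star)\le(\Crel+C_{\rm osc})\,\eta_\star\le(\Crel+C_{\rm osc})\,(N+1)^{-s}\norm{u}{\A_s(\TT)}$, and the finitely many remaining indices $N$ contribute only a finite amount, so taking the supremum gives the claim.

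The converse is the crux and is where the novelty lies. Given $N$, pick $\TT_\star\in\refine(\TT)$ with $\#\TT_\star-\#\TT\le N$ and $V_\star\in\XX_\star$ with $\norm{u-V_\star}{\HH}+\osc_\star(V_\star)\le (N+1)^{-s}\norm{u}{\E_s(\TT)}$; here the discrete solution on $\TT_\star$ need not exist. Let $m$ be the index from Lemma~\ref{lemma:m}, let $\widehat\TT$ denote the $m$-fold uniform refinement of $\TT$ (so $\widehat\TT\in\refine(\widehat\TT_0)$ and $\#\widehat\TT\le\Cson^m\#\TT$ by Lemma~\ref{lemma:m}), and form the overlay $\TT_\plus:=\TT_\star\oplus\widehat\TT\in\refine(\widehat\TT_0)$. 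Then Corollary~\ref{cor:uniform} together with Lemma~\ref{lemma:m} ensures that $U_\plus\in\XX_\plus$ exists and is quasi-optimal with a uniform C\'ea constant on $\refine(\widehat\TT_0)$, while the overlay estimate~\eqref{mesh:overlay} yields $\#\TT_\plus-\#\TT\le N+\Cson^m\#\TT=:N'$, i.e.\ $\TT_\plus\in\T_{N'}(\TT)$. Now, using efficiency from~\eqref{eq:osc}, the C\'ea lemma on $\TT_\plus$ (hence $\norm{u-U_\plus}{\HH}\lesssim\norm{u-V_\star}{\HH}$, since $V_\star\in\XX_\star\subseteq\XX_\plus$), the oscillation stability in $\XX_\plus$ together with a triangle inequality, and the monotonicity $\osc_\plus(V_\star)\le\osc_\star(V_\star)$ under refinement, one obtains $\min_{\TT_\bullet\in\T_{N'}(\TT)}\eta_\bullet\le\eta_\plus\le\Ceff\big(\norm{u-U_\plus}{\HH}+\osc_\plus(U_\plus)\big)\lesssim\norm{u-V_\star}{\HH}+\osc_\star(V_\star)\le(N+1)^{-s}\norm{u}{\E_s(\TT)}$.

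To turn this into $\norm{u}{\A_s(\TT)}<\infty$, note that for $N\ge\Cson^m\#\TT$ one has $N'+1\le2(N+1)$, whence $(N'+1)^s\min_{\TT_\bullet\in\T_{N'}(\TT)}\eta_\bullet\lesssim\norm{u}{\E_s(\TT)}$; this controls all $N'\ge2\Cson^m\#\TT$, while the finitely many smaller indices contribute only a finite value (a minimum over a finite --- possibly empty --- family of meshes, with the convention $\min_\emptyset=0$). Hence $\norm{u}{\A_s(\TT)}<\infty$, and Lemma~\ref{lemma:As} closes the equivalence. The main obstacle, which does not arise in the corresponding result~\cite[Theorem~4.4]{axioms}, is precisely the presence of meshes $\TT_\star$ on which~\eqref{eq:discreteform} is not uniquely solvable: these are handled by overlaying with a uniform refinement fine enough to lie in $\refine(\widehat\TT_0)$, which restores solvability and the uniform C\'ea lemma via Corollary~\ref{cor:uniform} and Lemma~\ref{lemma:m}, at the cost of only a bounded-in-$\#\TT$ increase of the element count, together with the routine bookkeeping relating $(N'+1)^{-s}$ to $(N+1)^{-s}$; a minor additional ingredient is the monotonicity of the oscillation under refinement, used alongside the stability hypothesis stated in the lemma.
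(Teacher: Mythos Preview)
Your proof is correct and follows essentially the same route as the paper: reduce via Lemma~\ref{lemma:As} to comparing $\norm{u}{\E_s(\TT)}$ with $\norm{u}{\A_s(\TT)}$; for $\A_s(\TT)\Rightarrow\E_s(\TT)$ take the optimal solvable mesh and plug $U_\star$ into the total-error infimum; for the converse, overlay the (possibly non-solvable) total-error optimal mesh with a sufficiently fine uniform refinement to restore solvability and the uniform C\'ea lemma, then use efficiency~\eqref{eq:osc} together with the oscillation stability hypothesis.

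Two minor differences are worth noting. First, the paper overlays with $\widehat\TT_0$ (the $m$-fold uniform refinement of $\TT_0$), giving an additive constant $C=(\Cson^m-1)\#\TT_0$ independent of $\TT$, whereas you overlay with the $m$-fold uniform refinement of $\TT$ itself, giving an additive term proportional to $\#\TT$; both lead to the same conclusion after the ``finitely many small $N$'' argument. Second, you explicitly flag the monotonicity $\osc_\plus(V_\star)\le\osc_\star(V_\star)$ under refinement as an extra ingredient not listed among the hypotheses of the lemma; the paper's proof uses this same inequality silently in the step $\inf_{V_\circ\in\XX_\circ}(\norm{u-V_\circ}\HH+\osc_\circ(V_\circ))\le\inf_{V_\plus\in\XX_\plus}(\norm{u-V_\plus}\HH+\osc_\plus(V_\plus))$, so your observation is a useful clarification rather than a deviation.
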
%

\begin{proof}
We show that $\norm{u}{\E_s(\TT)}<\infty$ if and only if $\norm{u}{\A_s(\TT)}<\infty$. Then, Lemma~\ref{lemma:As} will conclude the proof.

\emph{Step 1.}\quad
Let $\TT\in\T$ and $\widehat\TT_0\in\T$ from Lemma~\ref{lemma:m}. With $C:=(\Cson^m-1)\#\TT_0$, the triangulation $\TT_\star:=\TT\oplus\widehat\TT_0$ satisfies $\#\TT_\star \le \#\TT + \#\widehat\TT_0 - \oooldRevision{\#\TT_0} \le \#\TT + C$ and hence $\TT_\star\in\T_C(\TT)$. This proves $\TT_N(\TT)\neq\emptyset$ for $N\ge C$.

\emph{Step 2.}\quad We prove that $\norm{u}{\A_s(\TT)}<\infty$ implies $\norm{u}{\E_s(\TT)}<\infty$ by showing
\begin{align}\label{eq:Es:1}
 \sup_{N\ge C}\Big( (N+1)^s \min_{\substack{\TT_\star \in \refine(\TT)\\\#\TT_\star-\#\TT\le N}} \inf_{V_\star\in\XX_\star}\big(\norm{u-V_\star}\HH + \osc_\star(V_\star)\big) \Big) \lesssim \norm{u}{\A_s(\TT)}:
\end{align}
For $N\ge C$, choose $\TT_\plus\in\T_N(\TT)$ with $\eta_\plus = \min_{\TT_\star\in\TT_N(\TT)}\eta_\star$. Then,
\begin{align*}
 \min_{\substack{\TT_\star \in \refine(\TT)\\\#\TT_\star-\#\TT\le N}} \inf_{V_\star\in\XX_\star}\big(\norm{u-V_\star}\HH + \osc_\star(V_\star)\big)
\le \norm{u-U_\plus}\HH+\osc_\plus(U_\plus)\simeq\eta_\plus
 = \min_{\TT_\star\in\TT_N(\TT)}\eta_\star.
\end{align*}
This proves~\eqref{eq:Es:1}.

\emph{Step 3.} We prove that $\norm{u}{\E_s(\TT)}<\infty$ implies $\norm{u}{\A_s(\TT)}<\infty$ by showing
\begin{align}\label{eq:Es:2}
 \sup_{N\ge C}\Big( (N+1)^s \min_{\TT_\star \in \T_N(\TT)} \eta_\star\Big) \le (C+1)^s\,\norm{u}{\E_s}:
\end{align}
Let $N\ge0$. Choose $\TT_\plus \in \refine(\TT)$ with $\#\TT_\plus-\#\TT\le N$ 
and $\big(\norm{u-V_\plus}\HH + \osc_\plus(V_\plus)\big) = \inf_{V_\star\in\XX_\star}\big(\norm{u-V_\star}\HH + \osc_\star(V_\star)\big)$. Define $\TT_\circ:=\TT_\plus\oplus\widehat\TT_0$ and note that $\TT_\circ\in\TT_{N+C}(\TT)$. Together with the C\'ea lemma~\eqref{eq:sauterschwab_cea} and our assumptions on the data oscillations, we obtain for all $V_\circ\in\XX_\circ$,
\begin{align*}
 \eta_\circ \simeq \norm{u-U_\circ}\HH+\osc_\circ(U_\circ)
 &\lesssim \norm{u-U_\circ}\HH+\osc_\circ(V_\circ)+\norm{U_\circ-V_\circ}\HH
 \\&
 \lesssim \norm{u-U_\circ}\HH+\osc_\circ(V_\circ)+\norm{u-V_\circ}\HH
 \stackrel{\eqref{eq:sauterschwab_cea}}\lesssim \norm{u-V_\circ}\HH+\osc_\circ(V_\circ).
\end{align*}
This reveals $\eta_\circ\simeq\inf_{V_\circ\in\XX_\circ}\big(\norm{u-V_\circ}\HH+\osc_\circ(V_\circ)\big)$. Together with $\XX_\circ\supseteq\XX_\plus$, we derive
\begin{align*}
 (N+C+1)^s\min_{\TT_\star\in\T_{N+C}(\TT)}\eta_\star
 &\le (N+C+1)^s\,\inf_{V_\circ\in\XX_\circ}\big(\norm{u-V_\circ}\HH+\osc_\circ(V_\circ)\big)
 \\&
 \le\Big(\frac{N+C+1}{N+1}\Big)^s(N+1)^s\inf_{V_\plus\in\XX_\plus}\big(\norm{u-V_\plus}\HH+\osc_\plus(V_\plus)\big)
 \\&
 \le(C+1)^s\,\norm{u}{\E_s(\TT)}.
\end{align*}
This proves~\eqref{eq:Es:2}.
\end{proof}

\begin{remark}
The assumptions of Lemma~\ref{lemma:osc} are satisfied for residual-based error estimators in the frame of FEM with $\XX_\star:=\SSS^p(\TT_\star)\cap H^1_0(\Omega)$; see~\cite{ckns,cn,ffp}.
For each element $T\in\TT_\star\in\T$, let $\mathcal{F}_T$ denote the set of its facets (i.e., edges for $d=2$).
For arbitrarily chosen $q\ge p-1$, the data oscillations 
\begin{subequations}\label{eq:osc:def}
\begin{align}
 \osc_\star(V_\star)^2:=\sum_{T\in\TT_\star}\osc_\star(T,V_\star)^2 
\end{align}
corresponding to the indicators from~\eqref{eq:indicators} read, for all $T\in\TT_\star$,
\begin{align}
\begin{split}
 \osc_\star(T,V_\star)^2 
 = &h_T^2\,\min_{Q\in\PP^q(T)}\,\norm{f+{\rm div}(A\nabla V_\star)-b\cdot\nabla V_\star-cV_\star-Q}{L^2(T)}^2
 \\&
 + h_T\,\sum_{F\in\mathcal{F}_T}\min_{Q\in\PP^q(T)}\norm{[(A\nabla V_\star)\cdot n]-Q}{L^2(F\cap\Omega)}^2.
\end{split}
\end{align}
\end{subequations}
The constant $C_{\rm osc}$ in Lemma~\ref{lemma:osc} then depends on $q$ and $p$. If $A,b,c$ are piecewise polynomial and if $q$ is chosen sufficiently large, the local contributions simplify to
the well-known data oscillations $\osc_\star(T,V_\star)^2 = h_T^2\,\min\limits_{f_T\in\PP^q(T)}\,\norm{f-f_T}{L^2(T)}^2$ \oldRevision{as for} the Laplace problem.
\end{remark}

\subsection{Main \oldRevision{result}}
\label{section:main}
The following theorem is the main result of this work. It states that Algorithm~\ref{algorithm} does not only guarantee \ooldRevision{(linear)} convergence, but also the best possible  algebraic convergence rate for the error estimator. In explicit terms, suppose that $\norm{u}{\A_s}<\infty$ for some $s>0$. By definition~\eqref{eq:approximatoin_class} of the approximation class, there exists a sequence of meshes $\widehat\TT_\ell\in\T=\refine(\TT_0)$ and corresponding error estimators $\widehat\eta_\ell$ such that $\widehat\eta_\ell\lesssim \big( \# \widehat\TT_\ell - \# \TT_0+1\big)^{-s}$ for all $\ell\in\N_0$. Note that these ``optimal'' triangulations are not necessarily successive refinements but in general even totally unrelated. Therefore, the important implication of the following theorem is that indeed the adaptively generated triangulations $\TT_\ell$ yield the same algebraic decay $s>0$ \ooldRevision{if the marking parameter $0<\theta\ll1$ is sufficiently small}. Overall, Algorithm \ref{algorithm} thus guarantees that the error estimator decays asymptotically with any possible algebraic rate $s>0$.

\begin{theorem}\label{theorem:optimal}
Suppose~\eqref{axiom:stability}--\oooldRevision{\eqref{axiom:infty}} \ooldRevision{with $\XX_\infty=\HH$ (which can, for instance, be enforced by the expanded D\"orfler marking strategy from Proposition~\ref{prop:doerfler})}. Employ the notation of Algorithm~\ref{algorithm}.
\ooldRevision{Let $\widehat\gamma_0>0$ be the lower-bound of the $\inf$-$\sup$ constant~\eqref{eq:infsup} for the uniform refinement $\widehat\TT_0$ from Lemma~\ref{lemma:m}. Let $\ell_3,\ell_5\in\N_0$ be the indices from Lemma~\ref{lemma2:convergence} and Lemma~\ref{lemma:m}, respectively. 
Define $\ell_6:=\max\{\ell_3,\ell_5\}$.} Let $0<\theta<\theta_{\rm opt}:=(1+\Cstab^2\Crel^2/\ooldRevision{\widehat\gamma_0^2})^{-1}$. Then, for all $s>0$, there exists a constant $\Copt>0$ such that
	\begin{align}\label{eq:theorem_optimal_convergence}
		\norm{u}{\A_s}<\infty \quad \Longleftrightarrow \quad \forall \oooldRevision{\ell \geq \ooldRevision{\ell_6}} \quad  \eta_\ell \leq 
		\Copt\,\big( \# \TT_\ell - \# \TT_0+1\big)^{-s}.
	\end{align}%
	The constant $\Copt$ depends only on \oooldRevision{$\#\TT_{\ell_6}$}, $\TT_0$, $\theta$, $s$, and validity of~\eqref{axiom:stability}--\oooldRevision{\eqref{axiom:infty}}. 
\end{theorem}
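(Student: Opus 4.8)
The plan is to follow the by-now-standard route to optimality of~\cite{ckns,axioms}, paying attention to the only genuinely new point: every triangulation that enters the argument must be a refinement of $\widehat\TT_0$, so that the discrete problem~\eqref{eq:discreteform} is solvable, the corresponding $\inf$-$\sup$ constants are bounded below by $\widehat\gamma_0$, and quasi-monotonicity of the estimator (Lemma~\ref{lemma:m}) is available. This is precisely why the statement is restricted to $\ell\ge\ell_6\ge\ell_5$, where $\TT_\ell\in\refine(\widehat\TT_0)$ by Lemma~\ref{lemma:m}~(c).

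The implication ``$\Leftarrow$'' is the easy one. For $\ell\ge\ell_6$ the Galerkin solution $U_\ell$ exists, hence $\TT_\ell\in\T_N(\TT_0)$ with $N:=\#\TT_\ell-\#\TT_0$, and thus $\min_{\TT_\star\in\T_N}\eta_\star\le\eta_\ell\le\Copt\,(N+1)^{-s}$. Since $\#\TT_\ell$ is strictly increasing and $\#\TT_{\ell+1}\le2\,\#\TT_\ell$ by~\eqref{mesh:sons}, elementary counting together with Lemma~\ref{lemma:noe} (to pass from $\#\TT_\ell-\#\TT_0+1$ to $N+1$ for intermediate values of $N$) and quasi-monotonicity of the estimator for the finitely many small $N$ gives $\norm{u}{\A_s}<\infty$.

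For ``$\Rightarrow$'', assume $\norm{u}{\A_s}<\infty$; by Lemma~\ref{lemma:As} also $\norm{u}{\A_s(\widehat\TT_0)}<\infty$, so optimal meshes may be chosen inside $\refine(\widehat\TT_0)$, where~\eqref{eq:discreteform} is always solvable. The core step is \emph{optimality of D\"orfler marking}: since $\theta<\theta_{\rm opt}=(1+\Cstab^2\Crel^2/\widehat\gamma_0^2)^{-1}$, one may fix $0<\kappa<1$ and a Young parameter $\delta>0$ such that, for every $\ell\ge\ell_5$ and every $\TT_\plus\in\refine(\TT_\ell)\cap\refine(\widehat\TT_0)$ with $\eta_\plus^2\le\kappa\,\eta_\ell^2$, the set $\RR_{\ell,\plus}\subseteq\TT_\ell$ from discrete reliability~\eqref{axiom:reliability} satisfies the D\"orfler criterion $\theta\,\eta_\ell^2\le\eta_\ell(\RR_{\ell,\plus})^2$. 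Indeed, split $\eta_\ell^2=\eta_\ell(\RR_{\ell,\plus})^2+\eta_\ell(\TT_\ell\setminus\RR_{\ell,\plus})^2$; since $\TT_\ell\setminus\RR_{\ell,\plus}\subseteq\TT_\ell\cap\TT_\plus$, stability~\eqref{axiom:stability} with Young's inequality yields $\eta_\ell(\TT_\ell\setminus\RR_{\ell,\plus})^2\le(1+\delta)\,\eta_\plus^2+(1+\delta^{-1})\,\Cstab^2\,\norm{U_\plus-U_\ell}\HH^2$, while the uniform discrete reliability of Lemma~\ref{prop:new}~(b) (with $\widehat\TT_0$ in the role of $\TT_{\ell_2}$, cf.\ Lemma~\ref{lemma:m}~(a)) bounds $\norm{U_\plus-U_\ell}\HH\le\Crel\,\widehat\gamma_0^{-1}\,\eta_\ell(\RR_{\ell,\plus})$; with $\eta_\plus^2\le\kappa\,\eta_\ell^2$ and $\kappa,\delta$ small enough the coefficient of $\eta_\ell^2$ exceeds $\theta$. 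Now, for $\ell\ge\ell_6$ choose $N_\ell\in\N_0$ minimal so that some $\TT_{\rm opt}\in\T_{N_\ell}(\widehat\TT_0)$ has estimator $\eta_{\rm opt}^2\le\kappa\,\Cmon^{-2}\,\eta_\ell^2$; since $\norm{u}{\A_s(\widehat\TT_0)}<\infty$ and $(\eta_\ell)$ is bounded, this forces $N_\ell+1\lesssim\norm{u}{\A_s}^{1/s}\,\eta_\ell^{-1/s}$. Set $\TT_\plus:=\TT_\ell\oplus\TT_{\rm opt}\in\refine(\widehat\TT_0)$; quasi-monotonicity gives $\eta_\plus\le\Cmon\,\eta_{\rm opt}\le\kappa^{1/2}\eta_\ell$, so the comparison applies, and the set $\MM_\ell$ of (up to $\Cmark$) minimal cardinality selected in step~{\rm(iv)} satisfies $\#\MM_\ell\le\Cmark\,\#\RR_{\ell,\plus}\le\Cmark\Crel\,\#(\TT_\ell\setminus\TT_\plus)\le\Cmark\Crel\,(\#\TT_\plus-\#\TT_\ell)\le\Cmark\Crel\,N_\ell$, using~\eqref{mesh:sons},~\eqref{axiom:reliability} and the overlay estimate~\eqref{mesh:overlay} (applied with $\TT=\widehat\TT_0$).

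Finally, combine the mesh-closure estimate~\eqref{mesh:mesh-closure} with $\#\MM_j\lesssim N_j\lesssim\norm{u}{\A_s}^{1/s}\,\eta_j^{-1/s}$ for $j\ge\ell_6$ (the terms $j<\ell_6$ add only a fixed number $\lesssim\ell_6\,\#\TT_{\ell_6}$) and with linear convergence $\eta_\ell\le\Clin\,\qlin^{\ell-j}\,\eta_j$ (Theorem~\ref{theorem:convergence}, valid for $\ell\ge j\ge\ell_3$) to obtain, for $\ell\ge\ell_6$,
\[
 \#\TT_\ell-\#\TT_0
 \;\lesssim\;
 \#\TT_{\ell_6}+\norm{u}{\A_s}^{1/s}\sum_{j=\ell_6}^{\ell-1}\eta_j^{-1/s}
 \;\lesssim\;
 \#\TT_{\ell_6}+\norm{u}{\A_s}^{1/s}\,\eta_\ell^{-1/s}\sum_{j=\ell_6}^{\ell-1}\qlin^{(\ell-j)/s}
 \;\lesssim\;
 \#\TT_{\ell_6}+\norm{u}{\A_s}^{1/s}\,\eta_\ell^{-1/s},
\]
the geometric series being summable because $0<\qlin<1$. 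Since $(\eta_\ell)$ is bounded, this gives $\#\TT_\ell-\#\TT_0+1\lesssim\#\TT_{\ell_6}\,\norm{u}{\A_s}^{1/s}\,\eta_\ell^{-1/s}$ after using Lemma~\ref{lemma:noe}, i.e.\ $\eta_\ell\le\Copt\,(\#\TT_\ell-\#\TT_0+1)^{-s}$ with $\Copt\simeq(\#\TT_{\ell_6})^s\,\norm{u}{\A_s}$, a constant of the asserted type. \textbf{The main obstacle} is not any single inequality but the bookkeeping: one must keep each auxiliary triangulation ($\TT_\ell$, $\TT_{\rm opt}$, and the overlay $\TT_\ell\oplus\TT_{\rm opt}$) inside $\refine(\widehat\TT_0)$, so that all discrete solutions exist and all $\inf$-$\sup$ constants stay uniformly bounded below by $\widehat\gamma_0$ — which is exactly where the factor $\widehat\gamma_0^{-2}$, hence the value of $\theta_{\rm opt}$, enters through the $\gamma_\bullet^{-1}$ in~\eqref{axiom:reliability}. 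Once this is under control, the chain ``optimal D\"orfler marking $\Rightarrow$ mesh-closure $\Rightarrow$ geometric summation via linear convergence'' is routine.
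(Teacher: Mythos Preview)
Your proof is correct and follows essentially the same route as the paper's: optimality of D\"orfler marking via stability plus uniform discrete reliability (packaged in the paper as Lemmas~\ref{lemma:doerfler}--\ref{lemma2:doerfler}), comparison with optimal meshes through the overlay estimate, and then mesh-closure combined with linear convergence and a geometric-series argument. The only cosmetic difference is that the paper anchors the auxiliary approximation class and the overlay at $\TT_{\ell_5}$ rather than at $\widehat\TT_0$, which slightly streamlines the counting (the mesh-closure sum starts directly at $\ell_6$, so no additive $\#\TT_{\ell_6}$ term needs to be absorbed).
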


\begin{lemma}[optimality of D\"orfler marking]
\label{lemma:doerfler}
\ooldRevision{Under the assumptions of Theorem~\ref{theorem:optimal} and
 for all $0<\theta<\theta_{\rm opt}$}, there
exists some $0<\kappa_{\rm opt}<1$ such that for all \ooldRevision{$\TT_\star\in\refine(\TT_{\ell_5})$} and all $\TT_\plus\in\refine(\TT_\star)$, 
\oooldRevision{it holds}
\begin{align}
 \eta_\plus \le \kappa_{\rm opt}\,\eta_\star
 \quad\Longrightarrow\quad
 \theta\,\eta_\star^2
 \le \eta_\star(\RR_{\star,\plus})^2,
\end{align}
where $\RR_{\star,\plus}$ is the (enlarged) set of refined elements from~\eqref{axiom:reliability}.
\end{lemma}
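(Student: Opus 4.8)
The plan is to adapt the standard ``D\"orfler marking is almost necessary'' argument (cf.~\cite{ckns,axioms}) to the present setting, the only new ingredient being that discrete solvability and the inf-sup constant must be controlled uniformly along the mesh sequence.

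\emph{Step~1 (well-posedness of the involved quantities).} First I would note that $\TT_\star\in\refine(\TT_{\ell_5})$ together with $\TT_{\ell_5}\in\refine(\widehat\TT_0)$ from Lemma~\ref{lemma:m}(c) yields $\TT_\star,\TT_\plus\in\refine(\widehat\TT_0)$. Hence Lemma~\ref{lemma:m}(a) (via Lemma~\ref{prop:new}) provides unique Galerkin solutions $U_\star\in\XX_\star$ and $U_\plus\in\XX_\plus$, the uniform bound $\gamma_\star,\gamma_\plus\ge\widehat\gamma_0>0$ for the inf-sup constants, and in particular the uniform discrete reliability $\norm{U_\plus-U_\star}{\HH}\le\Crel\,\widehat\gamma_0^{-1}\,\eta_\star(\RR_{\star,\plus})$. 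This is the crucial point: it lets me use the $\TT_\star$-independent constant $\widehat\gamma_0$ throughout, which is exactly what the restriction $\TT_\star\in\refine(\TT_{\ell_5})$ buys; without it the resulting $\kappa_{\rm opt}$ (and even the threshold $\theta_{\rm opt}$) would deteriorate along the sequence of meshes. This is also where I expect the main (and essentially only) subtlety to lie.

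\emph{Step~2 (core estimate).} Splitting $\eta_\star^2=\eta_\star(\TT_\star\cap\TT_\plus)^2+\eta_\star(\TT_\star\setminus\TT_\plus)^2$, I would bound the first term by stability~\eqref{axiom:stability}, namely $\eta_\star(\TT_\star\cap\TT_\plus)\le\eta_\plus(\TT_\plus\cap\TT_\star)+\Cstab\norm{U_\plus-U_\star}{\HH}\le\eta_\plus+\Cstab\norm{U_\plus-U_\star}{\HH}$, then apply the Young inequality $(a+b)^2\le(1+\delta)a^2+(1+\delta^{-1})b^2$ with a free parameter $\delta>0$ and the uniform discrete reliability from Step~1; for the second term I use $\TT_\star\setminus\TT_\plus\subseteq\RR_{\star,\plus}$ from~\eqref{axiom:reliability}, so that $\eta_\star(\TT_\star\setminus\TT_\plus)^2\le\eta_\star(\RR_{\star,\plus})^2$. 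This gives
\begin{align*}
 \eta_\star^2 \le (1+\delta)\,\eta_\plus^2
 + \big(1+(1+\delta^{-1})\,\Cstab^2\Crel^2\widehat\gamma_0^{-2}\big)\,\eta_\star(\RR_{\star,\plus})^2 .
\end{align*}
Inserting the hypothesis $\eta_\plus\le\kappa_{\rm opt}\,\eta_\star$ and rearranging (legitimate once $(1+\delta)\kappa_{\rm opt}^2<1$) yields
\begin{align*}
 \big(1-(1+\delta)\kappa_{\rm opt}^2\big)\,\eta_\star^2
 \le \big(1+(1+\delta^{-1})\,\Cstab^2\Crel^2\widehat\gamma_0^{-2}\big)\,\eta_\star(\RR_{\star,\plus})^2 .
\end{align*}

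\emph{Step~3 (choice of $\delta$ and $\kappa_{\rm opt}$).} Since $\big(1+(1+\delta^{-1})\Cstab^2\Crel^2\widehat\gamma_0^{-2}\big)^{-1}\to\theta_{\rm opt}=(1+\Cstab^2\Crel^2\widehat\gamma_0^{-2})^{-1}$ as $\delta\to\infty$ and $\theta<\theta_{\rm opt}$, I first fix $\delta>0$ so large that $\theta<\big(1+(1+\delta^{-1})\Cstab^2\Crel^2\widehat\gamma_0^{-2}\big)^{-1}$; then, by continuity of $\kappa\mapsto\frac{1-(1+\delta)\kappa^2}{1+(1+\delta^{-1})\Cstab^2\Crel^2\widehat\gamma_0^{-2}}$ at $\kappa=0$, I fix $\kappa_{\rm opt}\in(0,1)$ small enough that $(1+\delta)\kappa_{\rm opt}^2<1$ and $\theta\le\frac{1-(1+\delta)\kappa_{\rm opt}^2}{1+(1+\delta^{-1})\Cstab^2\Crel^2\widehat\gamma_0^{-2}}$. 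Combining this with the last display of Step~2 gives $\theta\,\eta_\star^2\le\eta_\star(\RR_{\star,\plus})^2$, which is the assertion. All of Steps~2--3 are elementary; the only place that genuinely uses the structure of the compactly perturbed problem is the uniform inf-sup bound established in Step~1.
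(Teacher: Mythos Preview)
Your proof is correct and follows essentially the same approach as the paper: both first use Lemma~\ref{lemma:m} to obtain existence of the Galerkin solutions and the uniform discrete reliability with constant $\Crel/\widehat\gamma_0$ for all $\TT_\star\in\refine(\TT_{\ell_5})$, and then invoke the standard ``D\"orfler marking is optimal'' argument. The only difference is that the paper outsources your Steps~2--3 to \cite[Proposition~4.12]{axioms}, whereas you spell out that argument in full; your parameter choices and the resulting threshold $\theta_{\rm opt}=(1+\Cstab^2\Crel^2\widehat\gamma_0^{-2})^{-1}$ match exactly.
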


\oooldRevision{\begin{proof}
According to \ooldRevision{Lemma~\ref{lemma:m}}, the discrete solutions $U_\star\in\XX_\star$ and $U_\bullet\in\XX_\bullet$ exist, and the discrete reliability \oldRevision{property}~\eqref{axiom:reliability} holds with the uniform constant \ooldRevision{$\Crel/\widehat\gamma_0$}. \oldRevision{Since stability~\eqref{axiom:stability} holds, we can apply \cite[Proposition~4.12]{axioms}, and the statement of the lemma follows.}
\end{proof}}%

\begin{lemma}\label{lemma2:doerfler}
\ooldRevision{Under the assumptions of Theorem~\ref{theorem:optimal},}
there exist constants $\Cdopt,\Cdopttwo >0$ such that for all $\ell \geq \ooldRevision{\ell_5}$, there exists a set $\RR_\ell\subseteq\TT_\ell$ such that the following holds:
For all $s>0$ with \ooldRevision{$\norm{u}{\A_s(\TT_{\ell_5})}<\infty$}, it holds
\begin{align}\label{eq:lemma_R_bound}
 \#\RR_\ell \leq \Cdopt (\Cdopttwo \ooldRevision{\norm{u}{\A_s(\TT_{\ell_5})}})^{1/s} \eta_\ell^{-1/s}
\end{align}
as well as the D\"orfler marking criterion
\begin{align}\label{eq:lemma_implies_doerfler}
 \theta \eta_\ell^2 \leq \eta_\ell(\RR_\ell)^2.
\end{align}
The contant $\Cdopttwo$ depends \ooldRevision{only} on $\theta$, $\ooldRevision{\widehat\gamma_0}$, and~\eqref{axiom:stability}--\eqref{axiom:reliability}, while $\Cdopt$ additionally depends on \ooldRevision{$\#\TT_{\ell_5}$} and $\TT_0$.
\end{lemma}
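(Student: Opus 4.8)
The plan is to adapt the standard optimality argument for Dörfler marking (as in~\cite{ckns,stevenson07,axioms}) to the present setting, where the key new ingredient is that existence of discrete solutions and a uniform inf-sup bound are only available on refinements of $\widehat\TT_0$. First I would fix $\ell\ge\ell_5$, so that $\TT_\ell\in\refine(\widehat\TT_0)$ by Lemma~\ref{lemma:m}(c), and hence the Galerkin solution $U_\ell\in\XX_\ell$ exists and the inf-sup constant is bounded below by $\widehat\gamma_0>0$. Since $\norm{u}{\A_s(\TT_{\ell_5})}<\infty$, for a suitable $N\in\N_0$ there is an optimal mesh $\TT_{\rm opt}\in\T_N(\TT_{\ell_5})$ with $\eta_{\rm opt}\le (N+1)^{-s}\norm{u}{\A_s(\TT_{\ell_5})}$ and $\#\TT_{\rm opt}-\#\TT_{\ell_5}\le N$. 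The point is to choose $N$ so that $\eta_{\rm opt}\le\kappa_{\rm opt}\,\eta_\ell$, where $\kappa_{\rm opt}$ is the constant from Lemma~\ref{lemma:doerfler}; concretely one takes $N+1$ to be the smallest integer with $(N+1)^s\ge\kappa_{\rm opt}^{-1}\,\norm{u}{\A_s(\TT_{\ell_5})}\,\eta_\ell^{-1}$, which yields $N+1\simeq(\norm{u}{\A_s(\TT_{\ell_5})}\,\eta_\ell^{-1})^{1/s}$ up to a constant depending only on $\kappa_{\rm opt}$ and $s$.

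Next I would form the overlay $\TT_\plus:=\TT_\ell\oplus\TT_{\rm opt}$, which lies in $\refine(\TT_\ell)\cap\refine(\widehat\TT_0)$ so that $U_\plus$ exists, and apply quasi-monotonicity of the estimator (Lemma~\ref{prop:new}(c) / Lemma~\ref{lemma:m}(a)) to get $\eta_\plus\lesssim\eta_{\rm opt}\le\kappa_{\rm opt}\,\eta_\ell$, at the cost of replacing $\kappa_{\rm opt}$ by $\kappa_{\rm opt}/\Cmon$ in the choice of $N$; shrinking $\kappa_{\rm opt}$ accordingly, Lemma~\ref{lemma:doerfler} applies and gives $\theta\,\eta_\ell^2\le\eta_\ell(\RR_{\ell,\plus})^2$ for the set $\RR_{\ell,\plus}\subseteq\TT_\ell$ from the discrete reliability axiom~\eqref{axiom:reliability}. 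Setting $\RR_\ell:=\RR_{\ell,\plus}$ proves~\eqref{eq:lemma_implies_doerfler}. For the cardinality bound~\eqref{eq:lemma_R_bound}, I would use $\#\RR_\ell\le\Crel\,\#(\TT_\ell\setminus\TT_\plus)$ from~\eqref{axiom:reliability}, together with the overlay estimate~\eqref{mesh:overlay} and Lemma~\ref{lemma:noe} to bound $\#(\TT_\ell\setminus\TT_\plus)\le\#\TT_\plus-\#\TT_\ell\le\#\TT_{\rm opt}-\#\TT_{\ell_5}\le N$, and finally insert the chosen $N\simeq(\norm{u}{\A_s(\TT_{\ell_5})}\,\eta_\ell^{-1})^{1/s}$. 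This produces~\eqref{eq:lemma_R_bound} with $\Cdopttwo$ depending only on $\theta$, $\widehat\gamma_0$, and~\eqref{axiom:stability}--\eqref{axiom:reliability} (through $\kappa_{\rm opt}$, $\Crel$, $\Cmon$, $s$), while $\Cdopt$ absorbs the factor $\Crel\Cson^m$ or similar combinatorial constants and the dependence on $\#\TT_{\ell_5}$, $\TT_0$ coming from Lemma~\ref{lemma:noe}.

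The one subtlety — and the step I expect to require the most care — is bookkeeping the constant dependencies: $\Cmon$ and $\widehat\gamma_0$ stem from Proposition~\ref{prop:uniform} applied to $\widehat\TT_0$ and hence depend on $\#\widehat\TT_0$ (equivalently on $\#\TT_0$ and $m$), not on $\ell$; one must check that $\kappa_{\rm opt}$ from Lemma~\ref{lemma:doerfler} likewise only inherits such $\ell$-independent dependencies, and that the passage from $\norm{u}{\A_s(\TT_{\ell_5})}$ to itself (no change of initial mesh is needed here, unlike in Lemma~\ref{lemma:As}) keeps $\Cdopttwo$ clean. Beyond that, the argument is the by-now classical ``overlay an optimal mesh, invoke quasi-monotonicity, then Dörfler-optimality'' scheme, with the only genuine modification being the systematic restriction to $\ell\ge\ell_5$ so that all invoked discrete quantities are well-defined.
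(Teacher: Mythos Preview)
Your approach is essentially identical to the paper's: choose an optimal mesh $\TT_\varepsilon\in\T_N(\TT_{\ell_5})$ at the right scale, overlay it with $\TT_\ell$, invoke quasi-monotonicity plus Lemma~\ref{lemma:doerfler}, and bound $\#\RR_\ell$ via~\eqref{axiom:reliability} and the overlay estimate. Two minor remarks: the step $\#(\TT_\ell\setminus\TT_\plus)\le\#\TT_\plus-\#\TT_\ell$ uses~\eqref{mesh:sons}, not Lemma~\ref{lemma:noe}; and no factor $\Cson^m$ is needed here---the paper obtains simply $\Cdopt=\Crel$ and $\Cdopttwo=\Cmon\kappa_{\rm opt}^{-1}$, since $\TT_{\ell_5}\in\T_N(\TT_{\ell_5})$ already guarantees the set is nonempty without passing through $\widehat\TT_0$ (also, the trivial case $\eta_\ell=0$ should be disposed of separately).
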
 

\begin{proof}
If $\eta_\ell=0$, the claim~\eqref{eq:lemma_R_bound}--\eqref{eq:lemma_implies_doerfler} is satisfied with $\RR_\ell:=\TT_\ell$. Thus, we suppose $\eta_\ell>0$.

{\em Step~1: Construction of mesh $\TT_\star$ and $\RR_\ell:=\RR_{\ell,\star}$.}\quad
Let $\varepsilon := \Cmon^{-1} \kappa_{\rm opt} \eta_\ell>0$. \ooldRevision{Due to $\ell\ge\ell_5$,} quasi-monotonicity of the estimator \ooldRevision{(Lemma~\ref{lemma:m})} yields $\varepsilon \leq \kappa_{\rm opt} \ooldRevision{\eta_{\ell_5}} < \ooldRevision{\norm{u}{\A_s(\TT_{\ell_5})}} < \infty$.
 Choose the minimal $N \in \N$ such that 
$\ooldRevision{\norm{u}{\A_s(\TT_{\ell_5})}} \leq \varepsilon (N+1)^s$. 
This implies $\eps< \ooldRevision{\norm{u}{\A_s(\TT_{\ell_5})}} \leq \varepsilon (N+1)^s$ and hence $N \geq 1$.
Note that $\ooldRevision{\TT_{\ell_5}\in\T_N(\TT_{\ell_5})}$ and hence $\ooldRevision{\T_N(\TT_{\ell_5})}\neq\emptyset$. Choose $\TT_\varepsilon \in \ooldRevision{\T_N(\TT_{\ell_5})}$ with  $\eta_\varepsilon = \min_{\TT_\star \in \ooldRevision{\T_N(\TT_{\ell_5})}} \eta_{\star}$.
Define $\TT_\star :=\TT_\eps \oplus \TT_\ell$. 
Recall that all $\TT_\bullet \in \refine(\ooldRevision{\TT_{\ell_5}})$ and corresponding spaces $\XX_\bullet \supseteq \ooldRevision{\XX_{\ell_5}}$ provide
unique solutions of the discrete formulation~\eqref{eq:discreteform}.
Therefore, we obtain $\TT_\star \in \ooldRevision{\T_N({\TT_{\ell_5}})}$ with Galerkin solution $U_\star \in \XX_\star$.
Let $\RR_\ell:=\RR_{\ell,\star}$ be the set provided by discrete reliability~\eqref{axiom:reliability}.
 
{\em Step~2: Optimality of D\"orfler marking yields~\eqref{eq:lemma_implies_doerfler}.}\quad
With \oooldRevision{the quasi-monotonicity of the estimator \ooldRevision{(Lemma~\ref{lemma:m})}} and the definition of the approximation class~\eqref{eq:approximatoin_class}, the choice of $N$ yields
\begin{align*}
 \eta_\star \le \Cmon \eta_\varepsilon \stackrel{\eqref{eq:approximatoin_class}}\leq \Cmon (N+1)^{-s} \ooldRevision{\norm{u}{\A_s(\TT_{\ell_5})}} \leq \Cmon \varepsilon = \kappa_{\rm opt} \eta_\ell.
\end{align*}
This implies $\eta_\star \leq \kappa_{\rm opt} \eta_\ell$ and hence 
Lemma~\ref{lemma:doerfler} proves~\eqref{eq:lemma_implies_doerfler}.

{\em Step~3: Verification of~\eqref{eq:lemma_R_bound}.}\quad
The choice $\RR_\ell=\RR_{\ell,\star}$ together with $\TT_\ell,\TT_\eps\in\refine(\ooldRevision{\TT_{\ell_5}})$ yields
\begin{align}\label{eq:proof_dopt_R_bound}
	\# \RR_{\ell} \stackrel{\eqref{axiom:reliability}}{\leq} \Crel \# (\TT_\ell \setminus \TT_\star) 
	\stackrel{\eqref{mesh:sons}}{\leq} \Crel (\# \TT_\star - \# \TT_\ell)
	\stackrel{\eqref{mesh:overlay}}{\leq} \Crel  (\# \TT_\eps - \ooldRevision{\# \TT_{\ell_5}}) \leq \Crel N.
\end{align}
Finally, minimality of $N$ implies
\begin{align*}
	N < \ooldRevision{\norm{u}{\A_s(\TT_{\ell_5})}}^{1/s} \varepsilon^{-1/s} = \Cdoptthree \eta_\ell^{-1/s},
\end{align*}
with $\Cdoptthree := \ooldRevision{\norm{u}{\A_s(\TT_{\ell_5})}}^{1/s} (\Cmon^{-1} \kappa_{\rm opt})^{-1/s} = (\Cmon \kappa_{\rm opt}^{-1} \ooldRevision{\norm{u}{\A_s(\TT_{\ell_5})}})^{1/s}$. Altogether, we thus see
\begin{align*}
\#\RR_\ell \stackrel{\eqref{eq:proof_dopt_R_bound}}\le 
\Crel N < \Crel \Cdoptthree \eta_\ell^{-1/s}.
\end{align*}
This proves~\eqref{eq:lemma_R_bound} with $\Cdopt = \Crel$  
and $\Cdopttwo = \Cmon \kappa_{\rm opt}^{-1}$. 
\end{proof}

\begin{proof}[Proof of Theorem~\ref{theorem:optimal}]
The implication ``$\Longleftarrow$'' in~\eqref{eq:theorem_optimal_convergence} follows by definition of the approximation class (cf.~\cite[Proposition~4.15]{axioms}). We thus focus on the implication ''$\Longrightarrow$'' in~\eqref{eq:theorem_optimal_convergence}.
To this end, suppose that $\norm{u}{\A_s}<\infty$. Lemma~\ref{lemma:As} then implies $\ooldRevision{\norm{u}{\A_s(\TT_{\ell_5})}}<\infty$. For $\ell \geq \ooldRevision{\ell_6=\max\{\ell_3,\ell_5\}}$, let $\MM_\ell$ be the set of marked elements in the $\ell$-th step of Algorithm~\ref{algorithm}. According to Lemma~\ref{lemma2:doerfler},
there exists $\RR_{\ell}\subseteq\TT_\ell$ with~\eqref{eq:lemma_R_bound}--\eqref{eq:lemma_implies_doerfler}. According to the minimality of $\MM_\ell$ (see step~4 in Algorithm~\ref{algorithm}), it follows
\begin{align*}
	\# \MM_\ell \stackrel{\eqref{eq:lemma_implies_doerfler}}\leq \Cmark \#\RR_{\ell} \stackrel{\eqref{eq:lemma_R_bound}}{\leq} \Cmark \Cdopt (\Cdopttwo \ooldRevision{\norm{u}{\A_s(\TT_{\ell_5})}})^{1/s} \eta_\ell^{-1/s}.
\end{align*}
With the mesh-closure estimate \eqref{mesh:mesh-closure} and $\Cmesh\ge1$, we further obtain
\begin{align}\label{eq:optimal_convergence_proof}
	\# \TT_\ell - \oooldRevision{\# \TT_{\ell_6}} +1 \leq \Cmesh \sum_{\oooldRevision{j=\ell_6}}^{\ell} \# \MM_j \leq \Cmesh \Cmark \Cdopt (\Cdopttwo \ooldRevision{\norm{u}{\A_s(\TT_{\ell_5})}})^{1/s} \sum_{\oooldRevision{j=\ell_6}}^{\ell} \eta_j^{-1/s}.
\end{align}
Linear convergence (Theorem ~\ref{theorem:convergence}) implies
\begin{align}
	\eta_\ell \leq \Clin \qlin^{\ell -j} \eta_j \quad \text{for all } \, \oooldRevision{\ell_3} \leq j \leq \ell
\end{align}
and hence
\begin{align*}
	\eta_j^{-1/s} \leq \Clin^{1/s} \qlin^{(\ell -j)/s} \eta_\ell^{-1/s}. 
\end{align*}
Since there holds $0<q:= \qlin^{1/s}<1$, the geometric series applies and yields
\begin{align*}
	\sum_{\oooldRevision{j=\ell_6}}^{\ell} \eta_j^{-1/s} \leq \Clin^{1/s} \eta_\ell^{-1/s} \sum_{j=0}^{\ell} q^{(\ell -j)} 
	\leq \frac{\Clin^{1/s}}{1-\qlin^{1/s}} \, \eta_\ell^{-1/s}.
\end{align*}
Combining this estimate with ~\eqref{eq:optimal_convergence_proof}, we derive 
\begin{align*}
\# \TT_\ell - \oooldRevision{\# \TT_{\ell_6}} +1\leq \frac{\Cmesh \Cmark \Cdopt}{1-\qlin^{1/s}} (\Clin \Cdopttwo \ooldRevision{\norm{u}{\A_s(\TT_{\ell_5})}})^{1/s} \eta_\ell^{-1/s}. 
\end{align*}
Rearranging these terms, we see $\eta_\ell\lesssim(\# \TT_\ell - \oooldRevision{\# \TT_{\ell_6}}+1)^{-s}$. Lemma~\ref{lemma:noe} yields
\begin{align*}
 \# \TT_\ell - \# \TT_{0} +1 
 \stackrel{\eqref{eq:noe}}\le \# \TT_\ell
 \stackrel{\eqref{eq:noe}}\le \oooldRevision{\#\TT_{\ell_6}}\,\big(\# \TT_\ell - \oooldRevision{\#\TT_{\ell_6}} +1\big).
\end{align*}%
This concludes the important implication of~\eqref{eq:theorem_optimal_convergence}. 
\end{proof}

\section{Numerical experiments}%
\label{section:numerics}%
In this section, we present two numerical experiments for
the 2D Helmholtz equation~\eqref{eq:helmholtz} that underpin our theoretical findings. We use the lowest-order FEM with $\XX_\star := \SSS^1(\TT_\star) \cap H^1_0(\Omega)$ and a residual a~posteriori error estimator
(see~\cite{MR1471617} for a first systematic a posteriori error analysis for
the Helmholtz equation and~\cite{MR2201459} for a survey of available error estimation techniques for this problem).
In the experiments, we compare the performance of Algorithm~\ref{algorithm} with respect to
\begin{itemize}
\item different values of $\kappa\in\{1,2,4,8,16\}$,
\item different values of $\theta\in\{0.1,0.2,\dots,0.9\}$,
\item standard D\"orfler marking strategy (with $\Cmark=1$) as well as the expanded D\"orfler marking strategy of Proposition~\ref{prop:doerfler} (with $\Cmark=2$).
\end{itemize}
We consider domains $\Omega \subset \R^2$ with a single re-entrant corner and corresponding interior angle $\alpha>\pi$. Note that elliptic regularity thus predicts a generic convergence order $\OO(N^{-\beta/2})$ for the error on uniform meshes with $N$ elements, where $\beta = \pi/\alpha < 1$. On the other hand, the optimal convergence behavior for lowest-order elements is $\OO(N^{-1/2})$ if the mesh is appropriately refined.

\begin{figure}
  \psfrag{x}{\tiny $x$}
  \psfrag{y}{\tiny $y$}
  \psfrag{nE}[c][c]{\tiny number of elements $N$}
  \psfrag{est:d}{\tiny D\"orfler}
  \psfrag{est:dp}{\tiny expanded D\"orfler}
    \psfrag{est uniform}{\tiny uniform}
  \psfrag{O1}[b][r]{\tiny $\OO\big(N^{-1/2}\big)$}
  \psfrag{O12}[c][l]{\tiny $\OO\big(N^{-\beta/2}\big)$}
  \includegraphics[width=0.53\textwidth]{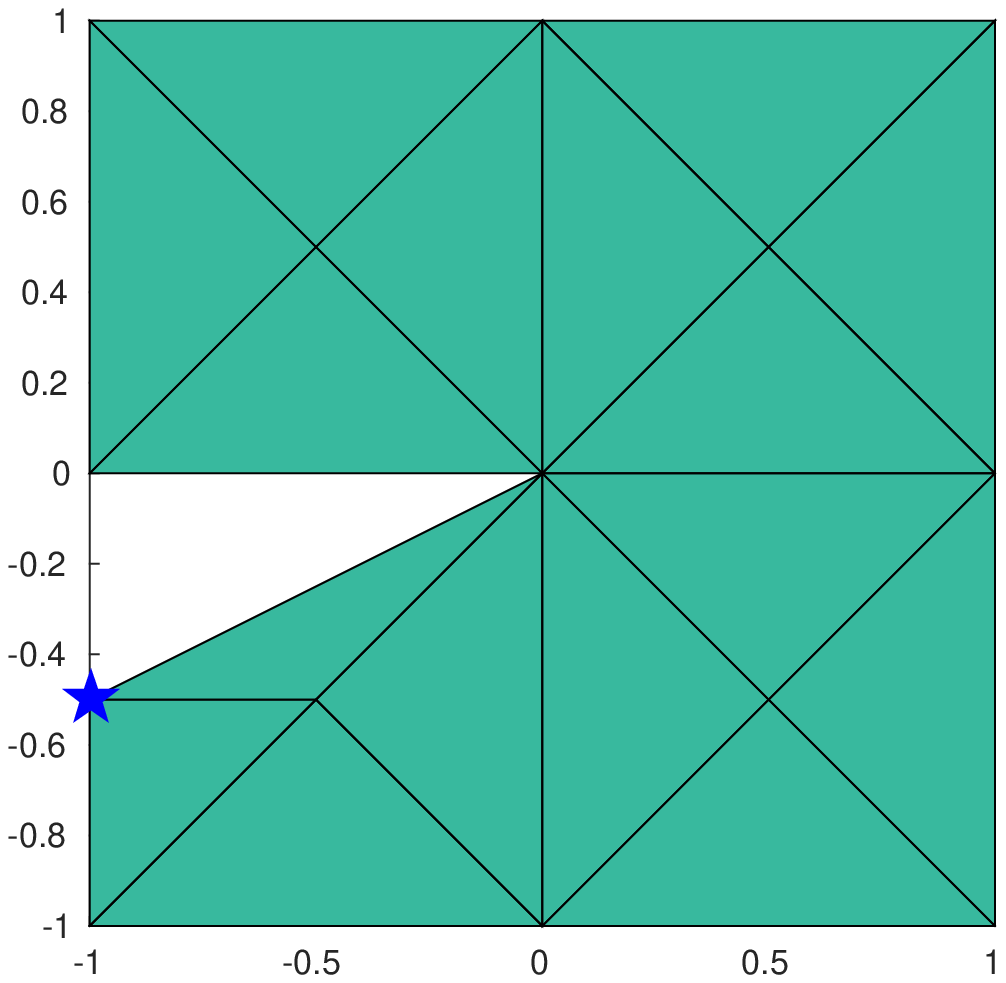}
  \hfill
  \includegraphics[width=0.46\textwidth]{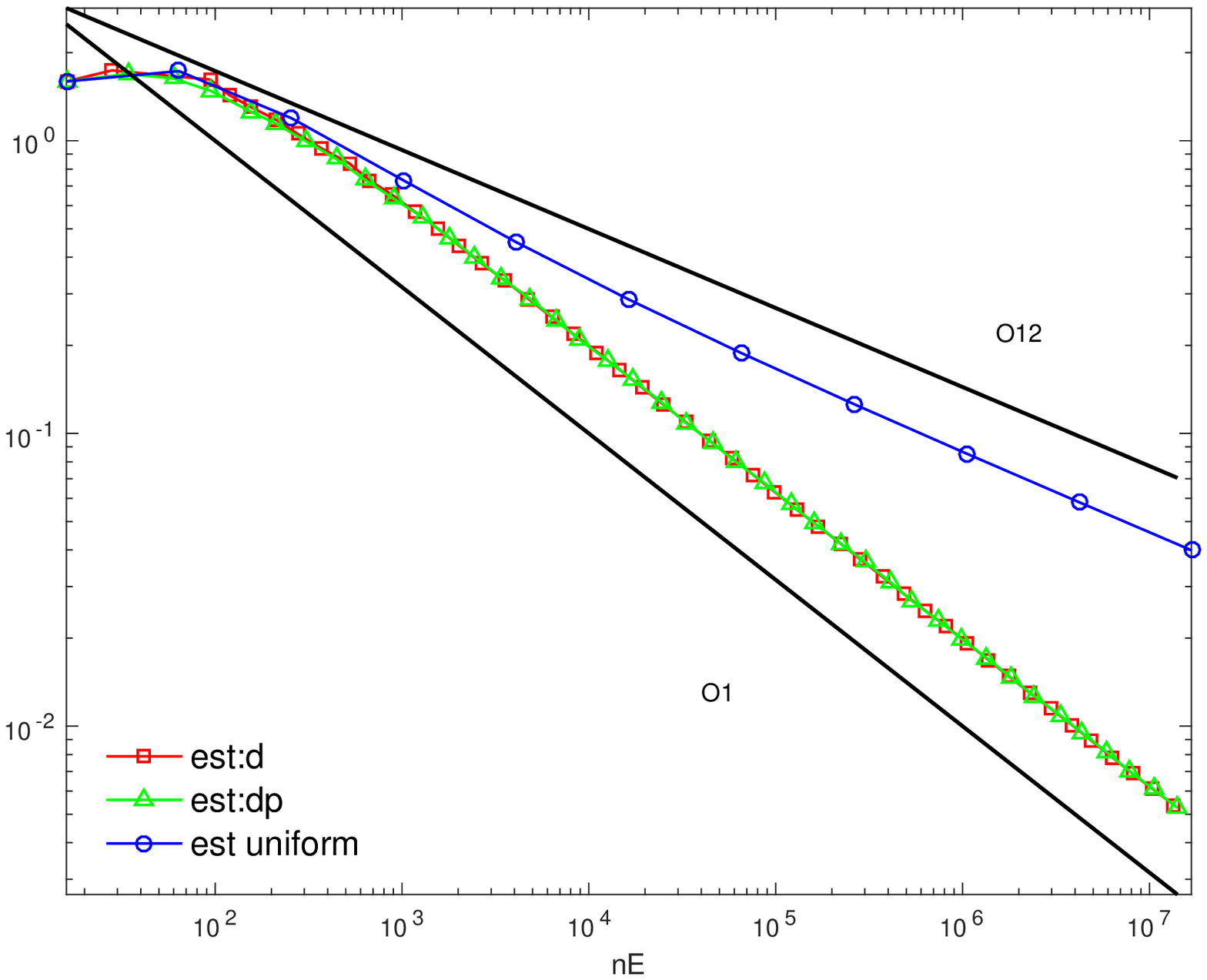}

  \caption{Geometry and initial partition $\TT_0$ in the experiment from Section~\ref{section:ex1} (left), where the blue star indicates the node $(-1,-t) = (-1,-0.5)$. 
  For $\kappa =2$ , we compare the error estimator for uniform vs.\ adaptive mesh-refinement with $\theta=0.2$ (right). Uniform mesh-refinement leads to a suboptimal convergence rate, while Algorithm~\ref{algorithm} with D\"orfler marking and expanded D\"orfler marking recovers the optimal convergence rate.}
  \label{fig:ex1:geometry}
\end{figure}
\begin{figure}
  \centering
  \psfrag{nE}[c][c]{\tiny number of elements $N$}
   \psfrag{kappa =0}{\tiny $\kappa=0$}
  \psfrag{kappa =1}{\tiny  $\kappa=1$}
  \psfrag{kappa =2}{\tiny $\kappa=2$}
  \psfrag{kappa =4}{\tiny $\kappa=4$}
	\psfrag{kappa =8}{\tiny $\kappa=8$}
	\psfrag{uniform: kappa =0}{\tiny  $\kappa=0$}
	\psfrag{uniform: kappa =1}{\tiny $\kappa=1$}
	\psfrag{uniform: kappa =2}{\tiny  $\kappa=2$}
	\psfrag{uniform: kappa =4}{\tiny  $\kappa=4$}
	\psfrag{uniform: kappa =8}{\tiny$\kappa=8$}
  \psfrag{O1}[t][r]{\tiny$\OO\big(N^{-1/2}\big)$}
  \psfrag{O12}[b][r]{\tiny $\OO\big(N^{-\beta/2}\big)$}
  \includegraphics[width=0.46\textwidth]{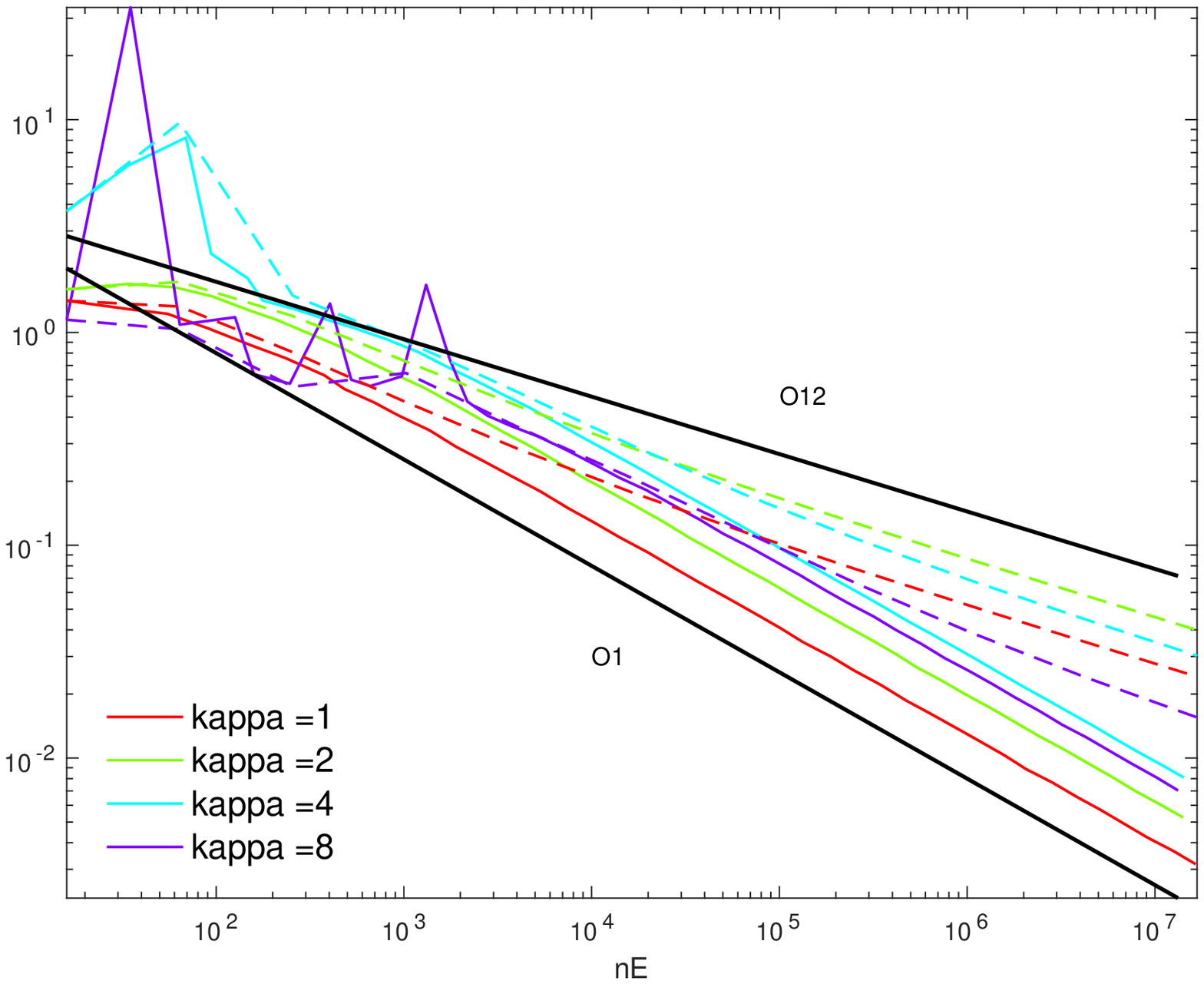}
  \hfill
  \includegraphics[width=0.46\textwidth]{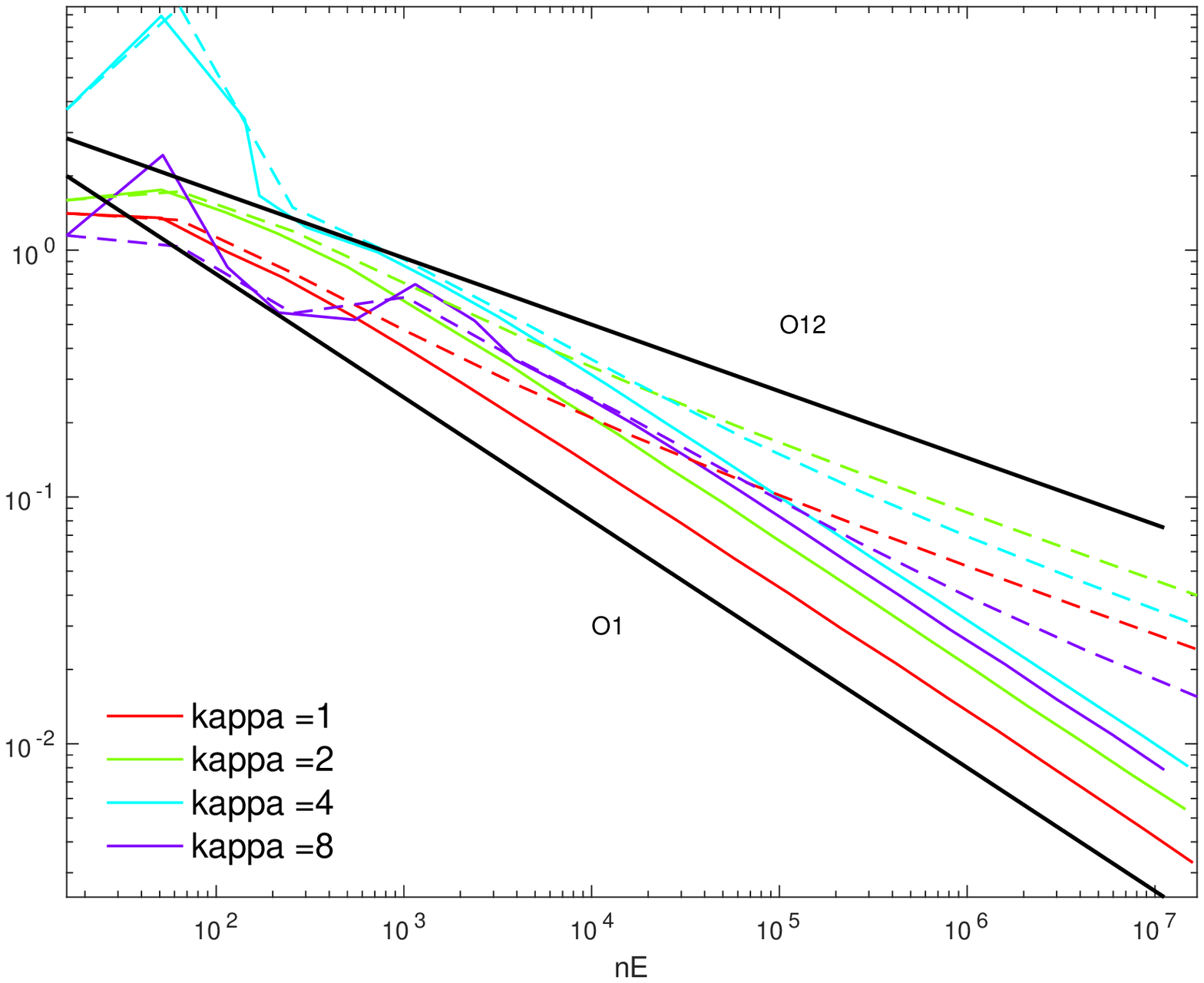}

  \caption{Convergence rates for the error estimator in the experiment from Section~\ref{section:ex1} for different values of $\kappa$ and for marking parameter $\theta=0.2$ (left) and $\theta = 0.5$ (right). Dashed lines mark uniform refinement, while solid lines mark the output of Algorithm~\ref{algorithm} with expanded D\"orfler marking. 
The latter recovers optimal convergence rates, while uniform mesh-refinement does not.\vspace*{2mm}}

  \label{fig:ex1:compare_kappa}
\end{figure}

\begin{figure}
  \centering
  \psfrag{nE}[c][c]{\tiny number of elements $N$}
  \psfrag{theta =0.1}{\tiny $0.1$}
  \psfrag{theta =0.2}{\tiny $0.2$}
  \psfrag{theta =0.3}{\tiny $0.3$}
  \psfrag{theta =0.4}{\tiny $0.4$}
  \psfrag{theta =0.5}{\tiny $0.5$}
  \psfrag{theta =0.6}{\tiny $0.6$}
  \psfrag{theta =0.7}{\tiny $0.7$}
  \psfrag{theta =0.8}{\tiny $0.8$}
  \psfrag{theta =0.9}{\tiny $0.9$} 
  \psfrag{uniform}{\tiny uniform}        
  \psfrag{nE}[c][c]{\tiny number of elements $N$}
  \psfrag{O1}[t][l]{\tiny $\OO\big(N^{-1/2}\big)$}
  \psfrag{O12}[c][t]{\tiny $\OO\big(N^{-\beta/2}\big)$}
  \includegraphics[width=0.46\textwidth]{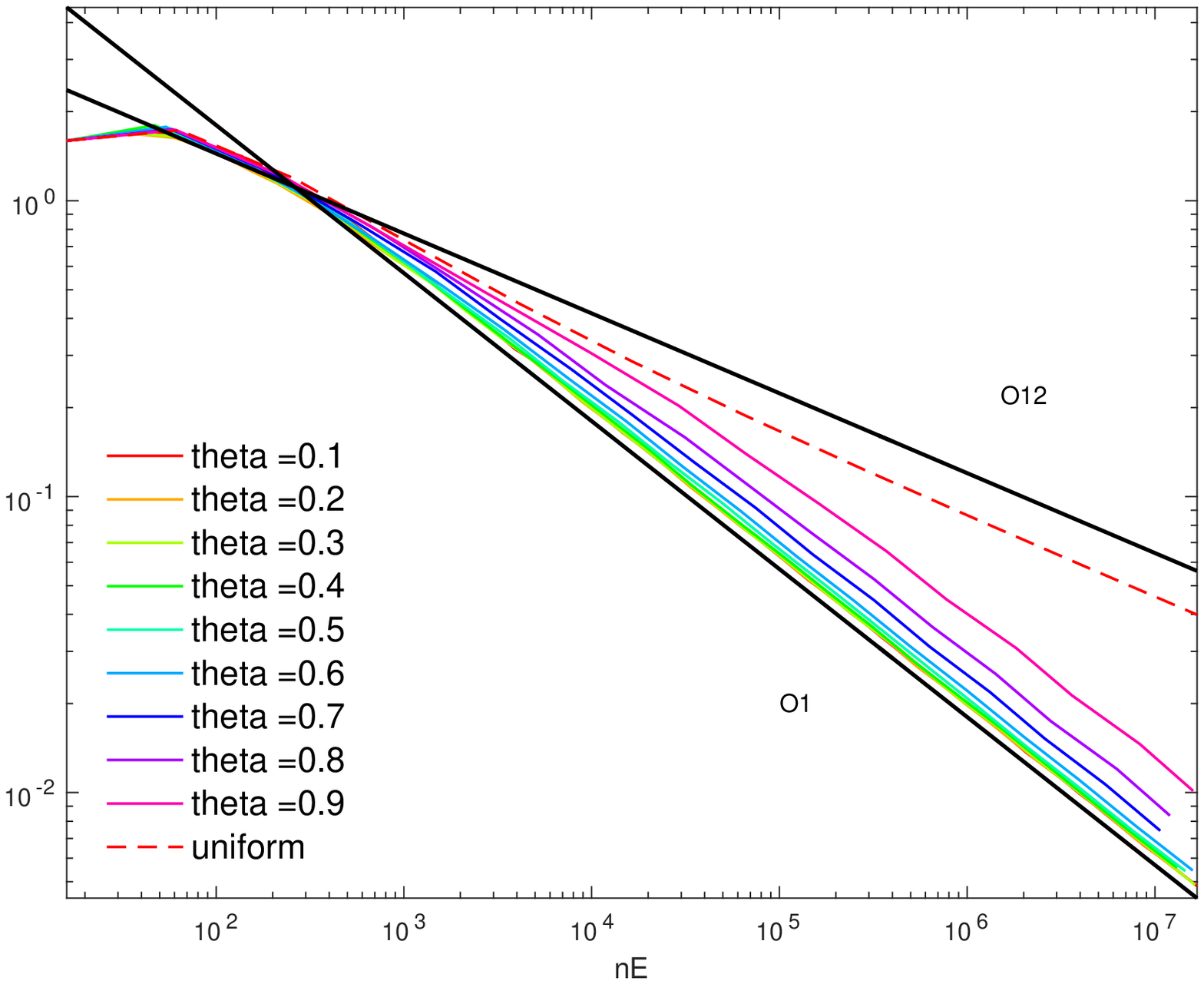}
  \hfill
  \includegraphics[width=0.46\textwidth]{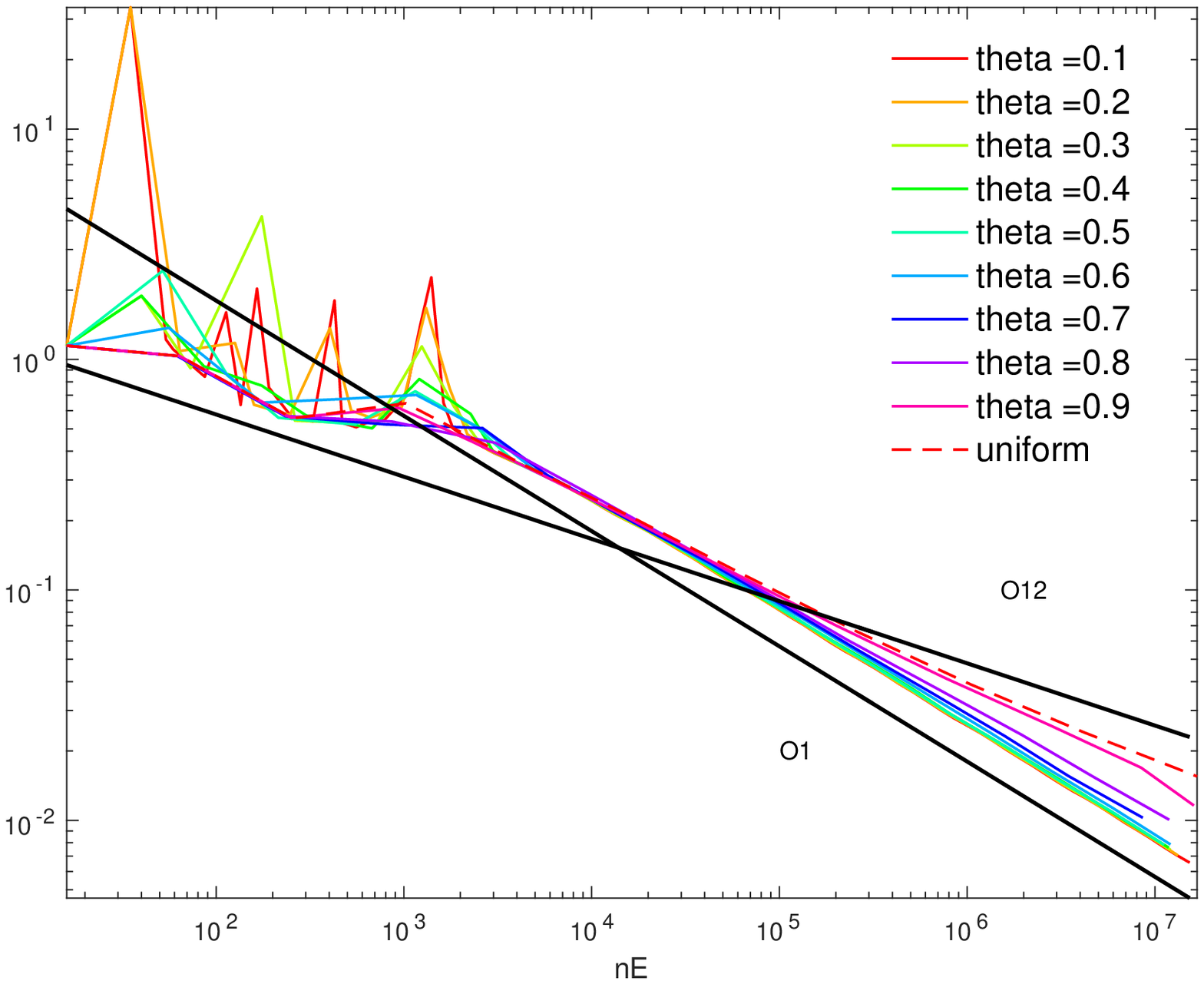}

  \caption{Convergence rates for the error estimator in the experiment from Section~\ref{section:ex1} for uniform and adaptive mesh-refinement with different values of $\theta \in \{0.1, \ldots, 0.9\}$ for $\kappa = 2$ (left) 
  and $\kappa=8$ (right). For all $\theta$, adaptive mesh-refinement leads to optimal convergence behavior, while the preasymptotic behavior increases with $\kappa$.}
   \label{fig:ex1:compare_theta}
\end{figure}

\subsection{Experiment with unknown solution}\label{section:ex1}
We consider the Z-shaped domain $\Omega \subset \R^2$ from Figure~\ref{fig:ex1:geometry}.
The marked node has the coordinates $(-1,-t)=(-1,-0.5)$ and determines the angle $\alpha$ at the re-entrant corner $(0,0)$ which reads $\alpha = 2 \pi- \arcsin\big( t / \sqrt{1+t^2} \big)$, i.e., $\beta\approx0.5398$.
Consider the constant right-hand side $f=1$ in~\eqref{eq:helmholtz} so that the residual error estimator is equivalent to the actual error, i.e., $\eta_\star \simeq \norm{u-U_\star}{H^1(\Omega)}$. For $\kappa=2$, Figure~\ref{fig:ex1:geometry} shows a generically reduced convergence rate for the error estimator on uniform meshes, while Algorithm~\ref{algorithm} with $\theta=0.2$ regains the optimal convergence rate. 
\revision{Empirically, the results generated by employing the standard D\"orfler marking are of no difference to the results generated by employing the expanded D\"orfler marking from Propositon~\ref{prop:doerfler}.}
The same observation is made for other choices of $\theta$ (not displayed), so that we only consider the expanded D\"orfler marking. Figure~\ref{fig:ex1:compare_kappa} compares uniform vs.\ adaptive mesh-refinement for fixed $\theta\in\{0.2,0.5\}$ but various $\kappa\in\{1,2,4,8\}$. As expected, the preasymptotic phase increases with $\kappa$. However, adaptive mesh-refinement results in asymptotically optimal convergence behavior. Figure~\ref{fig:ex1:compare_theta} compares uniform vs.\ adaptive mesh-refinement for fixed $\kappa\in\{2,8\}$ but various $\theta\in\{0.1,\dots,0.9\}$. Although Theorem~\ref{theorem:optimal} predicts optimal convergence rates only for small marking parameters $0<\theta<\theta_{\rm opt}:=(1+\Cstab^2\Crel^2)^{-1}$, we observe that Algorithm~\ref{algorithm} is stable in $\theta$, and any choice of $\theta\le0.9$ leads to the optimal convergence behavior. Finally, we observed that Algorithm~\ref{algorithm} did never enforce uniform mesh-refinement in step~(i), i.e., throughout the resulting discrete linear systems were indefinite but regular.

\begin{figure}
  \centering
  \psfrag{nE}[c][c]{\tiny number of elements $N$}
  \psfrag{est:d}{\tiny est., D\"orfler}
   \psfrag{est:dp}{\tiny est., exp.\ D\"orfler}
    \psfrag{est:uniform}{\tiny est., uniform}
  \psfrag{error:d}{\tiny error, D\"orfler}
   \psfrag{error:dp}{\tiny error, exp.\ D\"orfler}
    \psfrag{error:uniform}{\tiny error, uniform}
  \psfrag{O1}[b][l]{\tiny $\OO(N^{-1/2})$}
  \psfrag{O12}[b][l]{\tiny $\OO(N^{-\beta/2})$}

  \includegraphics[width=0.46\textwidth]{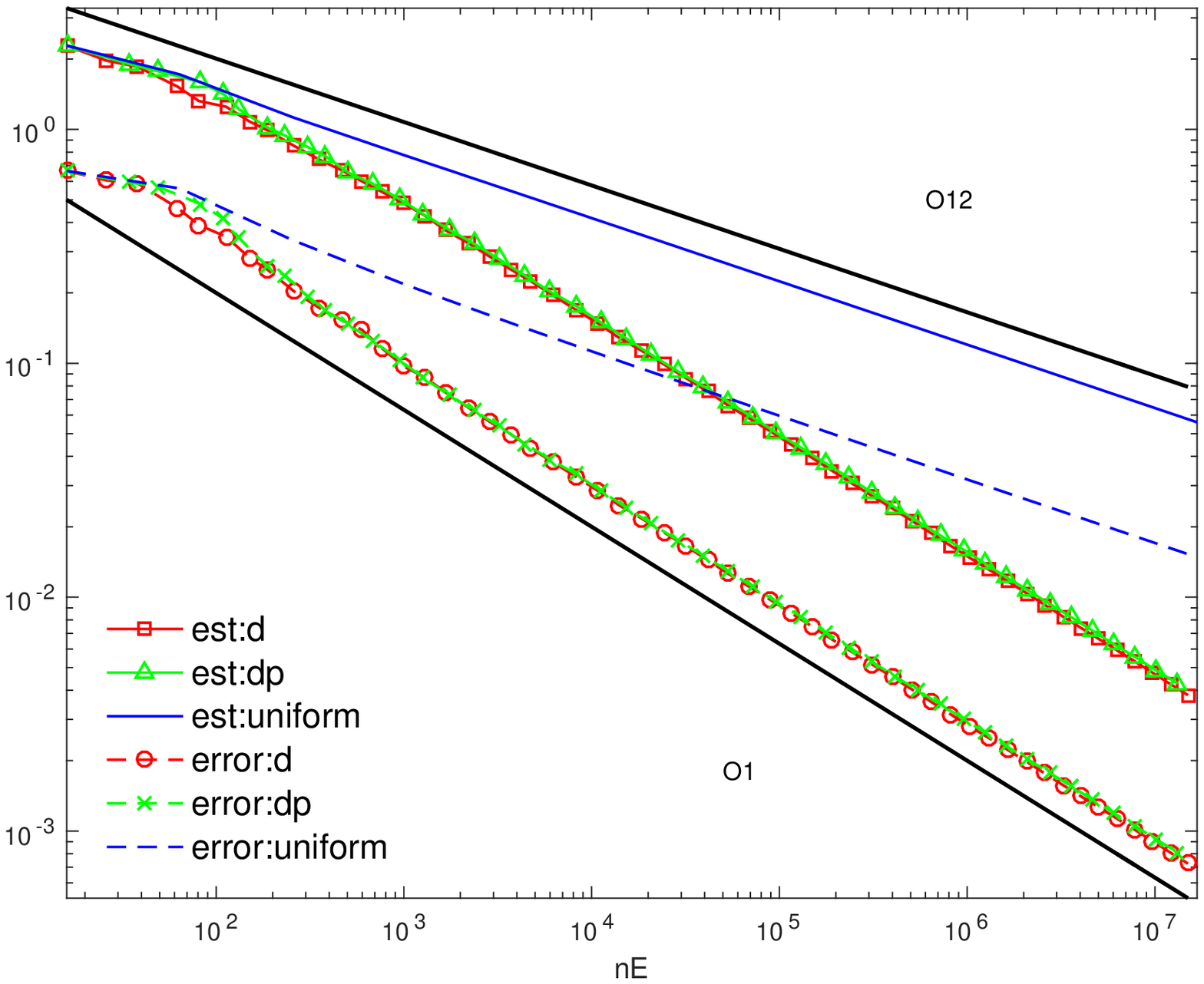}
  \hfill
  \includegraphics[width=0.46\textwidth]{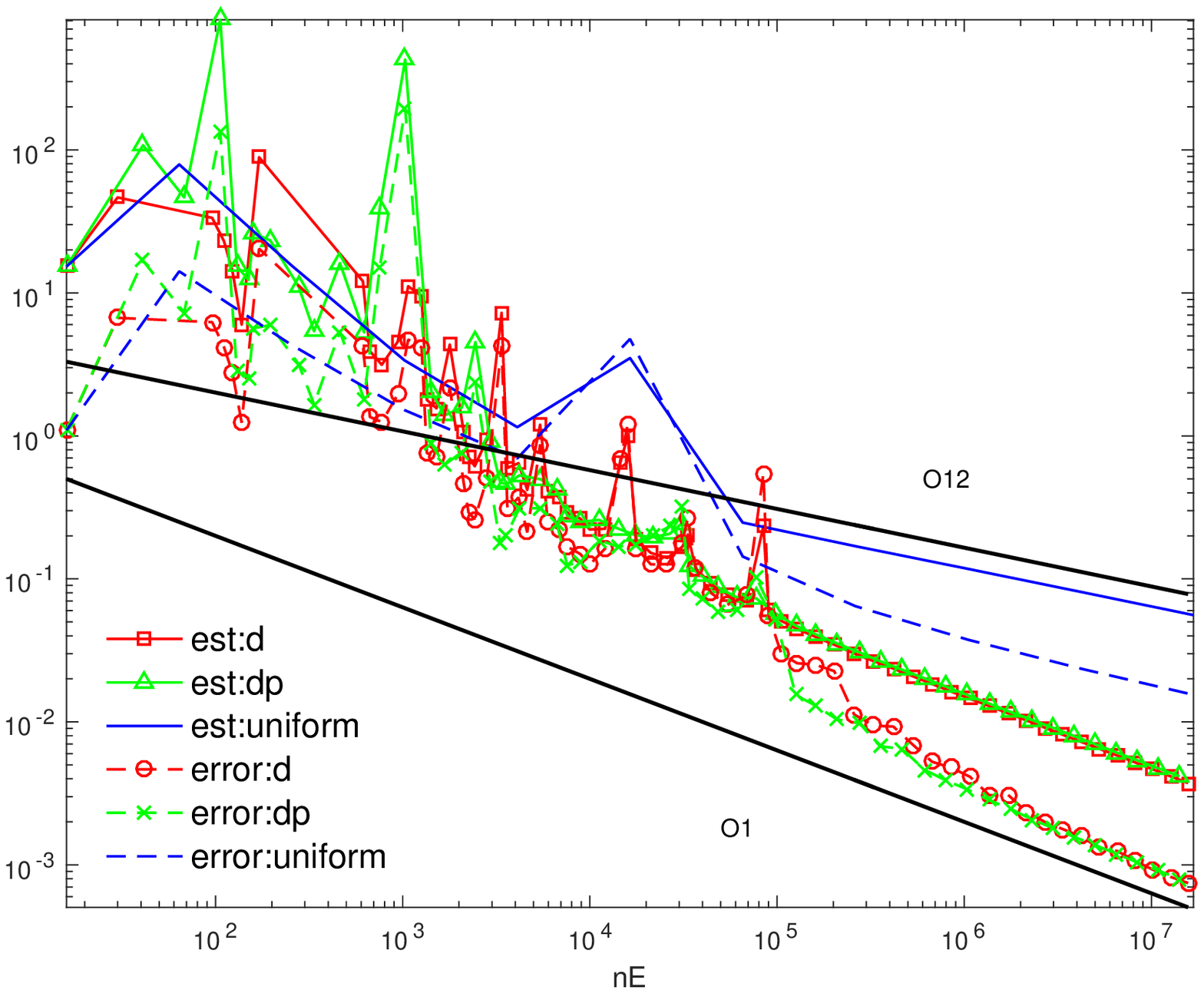}

  \caption{Error and error estimator in the experiment from Section~\ref{section:ex2} for $\kappa = 2$ (left) and $\kappa = 16$ (right) for uniform vs.\ adaptive mesh-refinement with $\theta=0.2$. In particular, we compare 
  D\"orfler marking and expanded D\"orfler marking.}
   \label{fig:ex2:compare_marking}
\end{figure}
\begin{figure}
  \centering
  \psfrag{nE}[c][c]{\tiny number of elements $N$}
  \psfrag{kappa=1}{\tiny  $\kappa=1$}
  \psfrag{kappa=2}{\tiny $\kappa=2$}
  \psfrag{kappa=4}{\tiny  $\kappa=4$}
  	\psfrag{kappa=8}{\tiny  $\kappa=8$}
	\psfrag{kappa=16}{\tiny  $\kappa=16$}
  \psfrag{O1}[t][l]{\tiny $\OO(N^{-1/2})$}
  \psfrag{O12}[c][l]{\tiny $\OO(N^{-\beta/2})$}

  \includegraphics[width=0.46\textwidth]{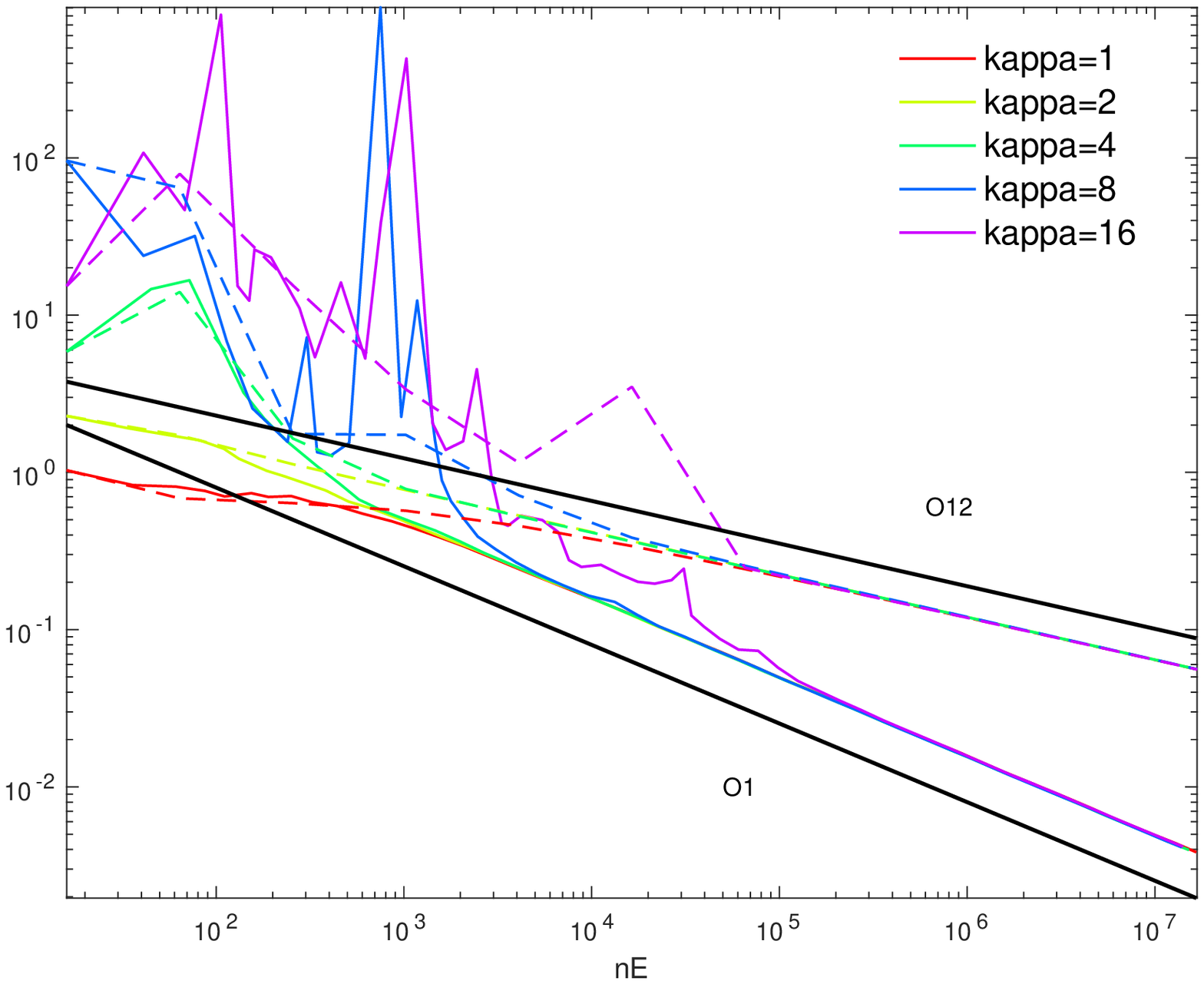}
  \hfill
  \includegraphics[width=0.46\textwidth]{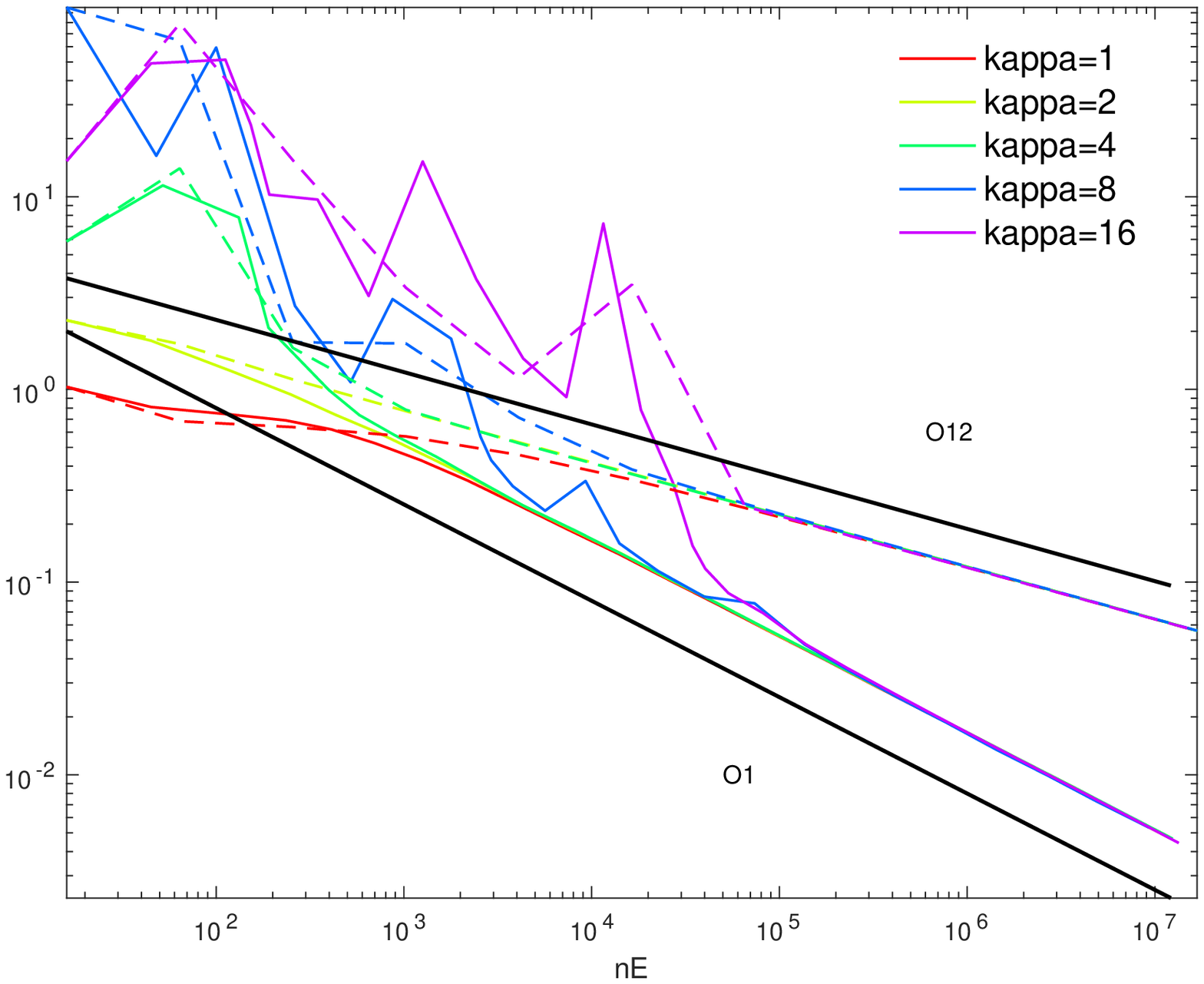}

  \caption{Convergence rates for the error estimator in the experiment from Section~\ref{section:ex2} for different values of $\kappa$ and for marking parameter $\theta=0.2$ (left) and $\theta = 0.5$ (right). Dashed lines mark uniform refinement, while solid lines mark the output of Algorithm~\ref{algorithm} with expanded D\"orfler marking. 
The latter recovers optimal convergence rates, while uniform mesh-refinement does not.\vspace*{2mm}}
  \label{fig:ex2:compare_kappa}
\end{figure}
\begin{figure}
  \centering
  \psfrag{nE}[c][c]{\tiny number of elements $N$}
  \psfrag{theta =0.1}{\tiny $0.1$}
  \psfrag{theta =0.2}{\tiny $0.2$}
  \psfrag{theta =0.3}{\tiny $0.3$}
  \psfrag{theta =0.4}{\tiny $0.4$}
  \psfrag{theta =0.5}{\tiny $0.5$}
  \psfrag{theta =0.6}{\tiny $0.6$}
  \psfrag{theta =0.7}{\tiny $0.7$}
  \psfrag{theta =0.8}{\tiny $0.8$}
  \psfrag{theta =0.9}{\tiny $0.9$} 
  \psfrag{uniform}{\tiny uniform}        
  \psfrag{O1}[t][l]{\tiny $\OO(N^{-1/2})$}
  \psfrag{O12}[l][l]{\tiny $\OO(N^{-\beta/2})$}

  \includegraphics[width=0.46\textwidth]{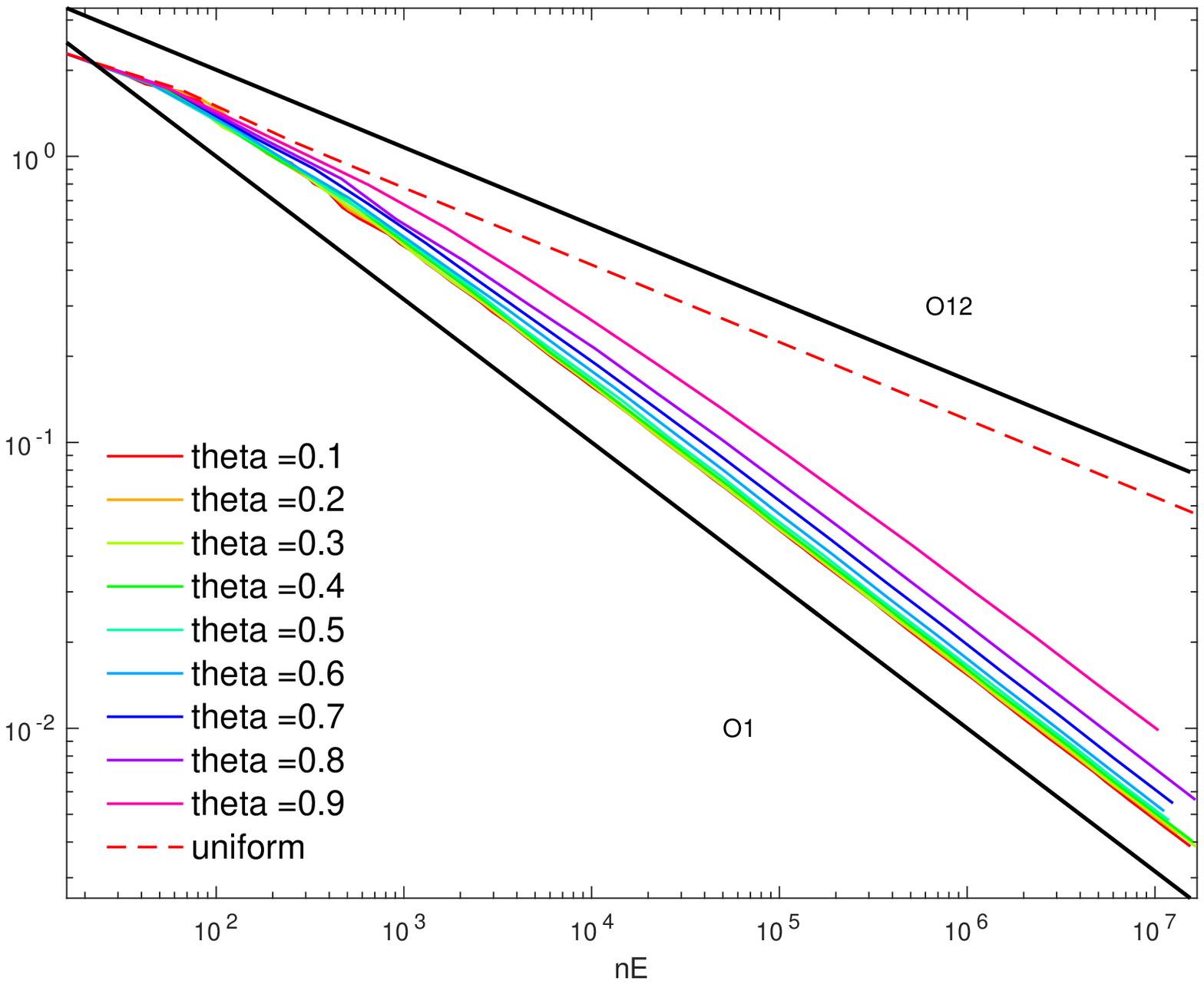}
  \hfill
  \includegraphics[width=0.46\textwidth]{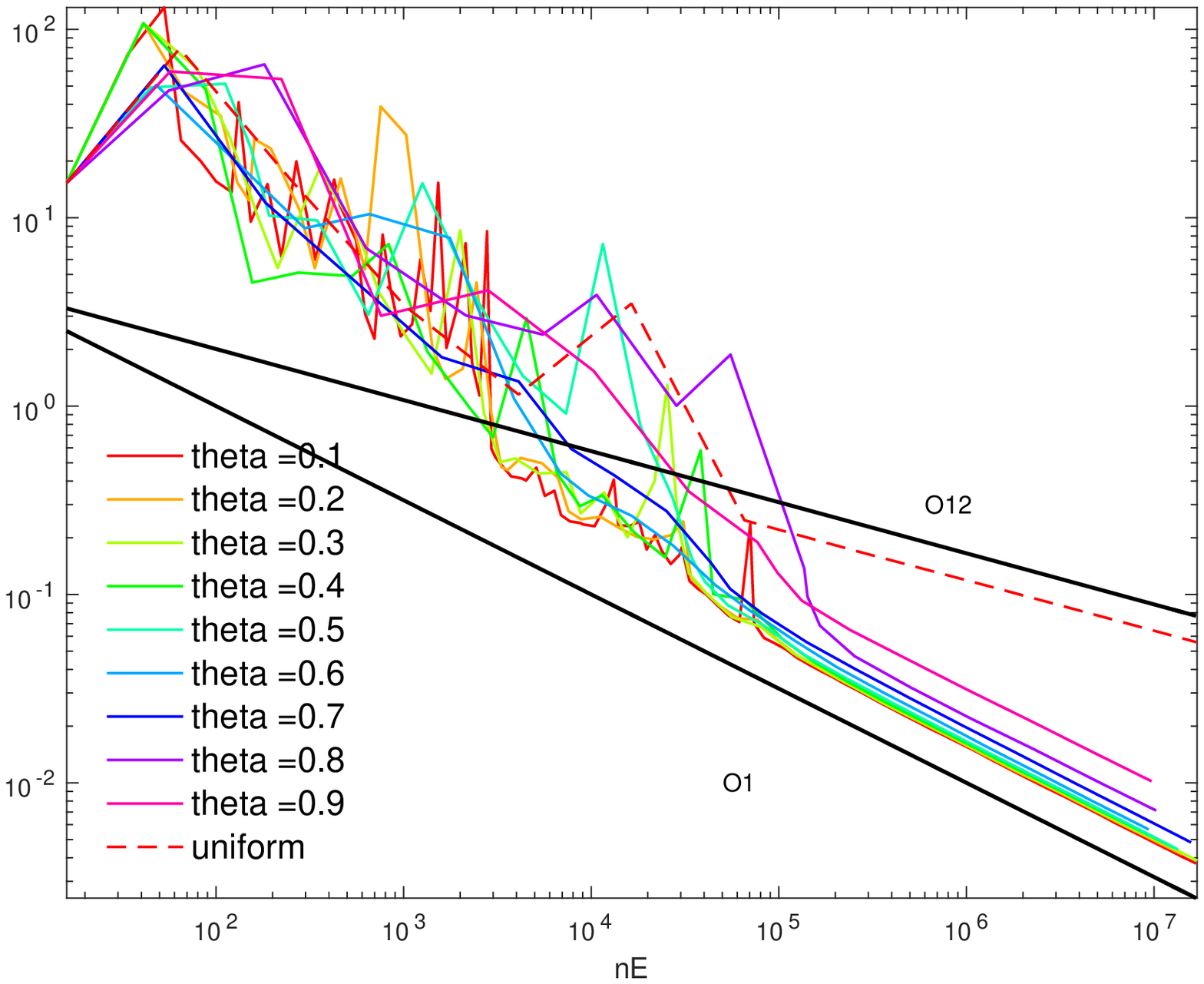}

  \caption{Convergence rates for the error estimator in the experiment from Section~\ref{section:ex2} for uniform and adaptive mesh-refinement with different values of $\theta \in \{0.1, \ldots, 0.9\}$ for $\kappa = 2$ (left) 
  and $\kappa=16$ (right). For all $\theta$, adaptive mesh-refinement leads to optimal convergence behavior, while the preasymptotic behavior increases with $\kappa$.}
  \label{fig:ex2:compare_theta}
\end{figure}
\begin{figure}
\centering
\begin{minipage}[t]{0.32\textwidth}\centering
\includegraphics[width=\textwidth]{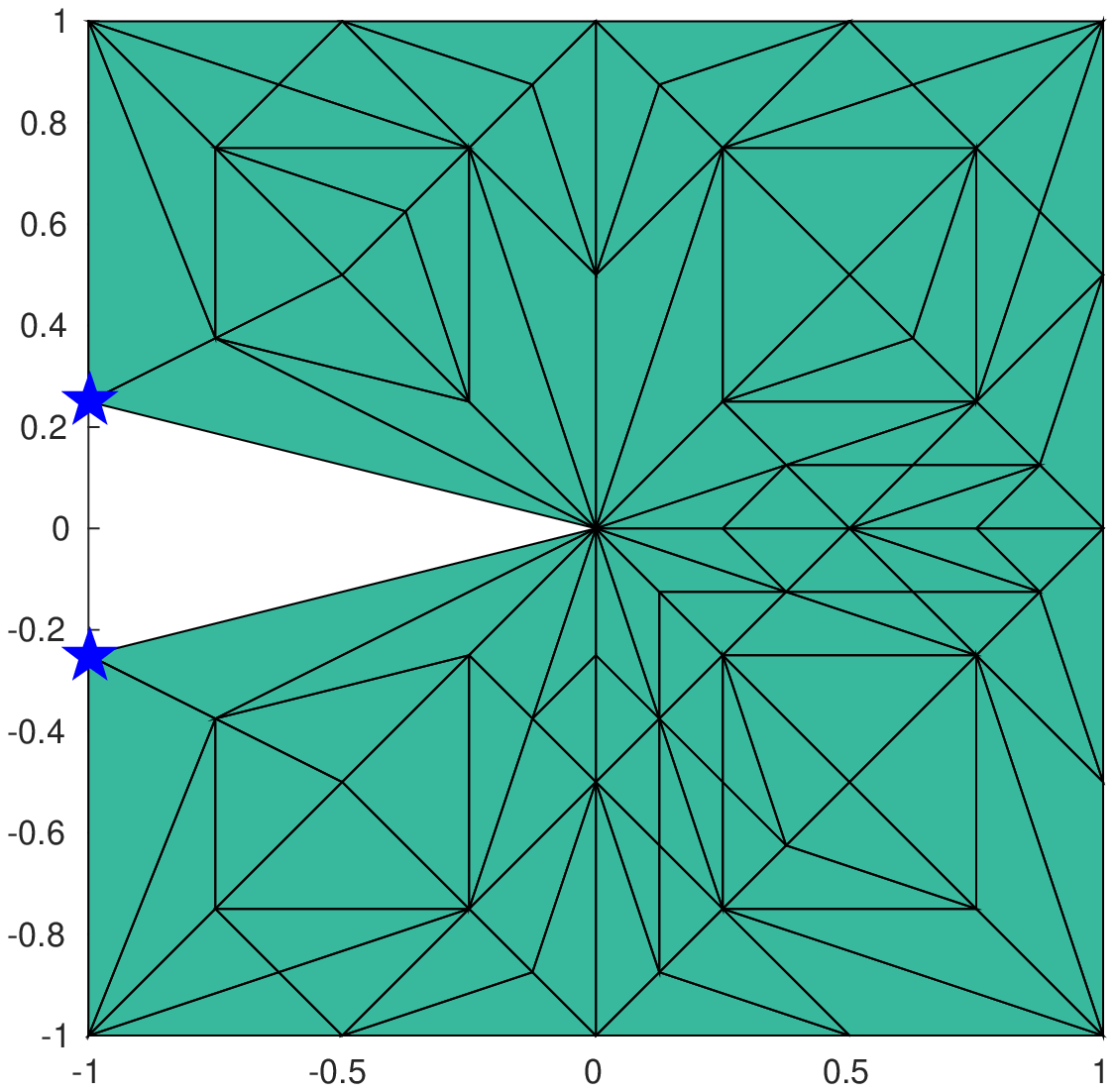}\\
\footnotesize$\#\TT_4 = 110$
\end{minipage}
\begin{minipage}[t]{0.32\textwidth}\centering
\includegraphics[width=\textwidth]{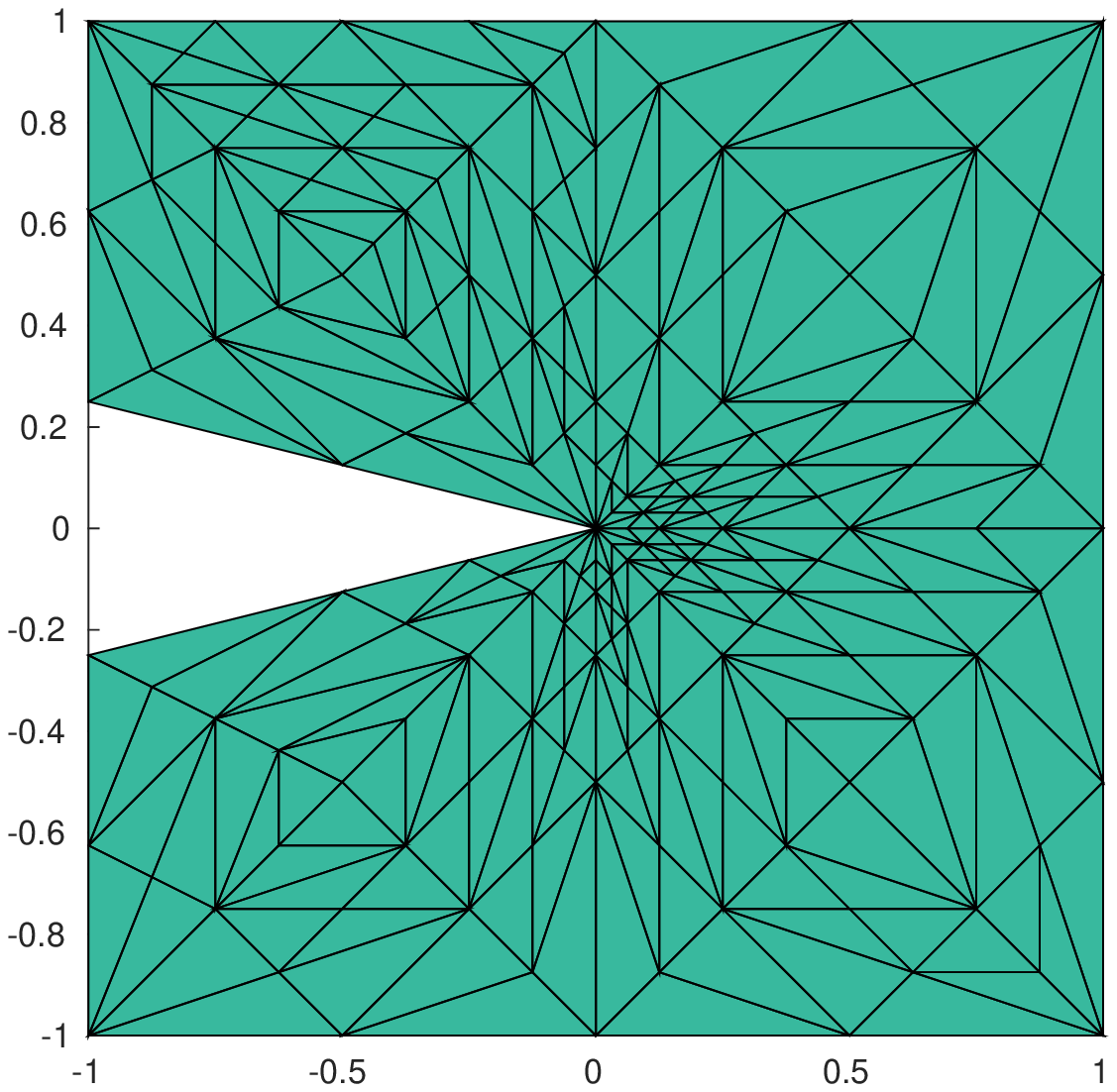}\\
\footnotesize$\#\TT_8 = 309$
\end{minipage}
\begin{minipage}[t]{0.32\textwidth}\centering
\includegraphics[width=\textwidth]{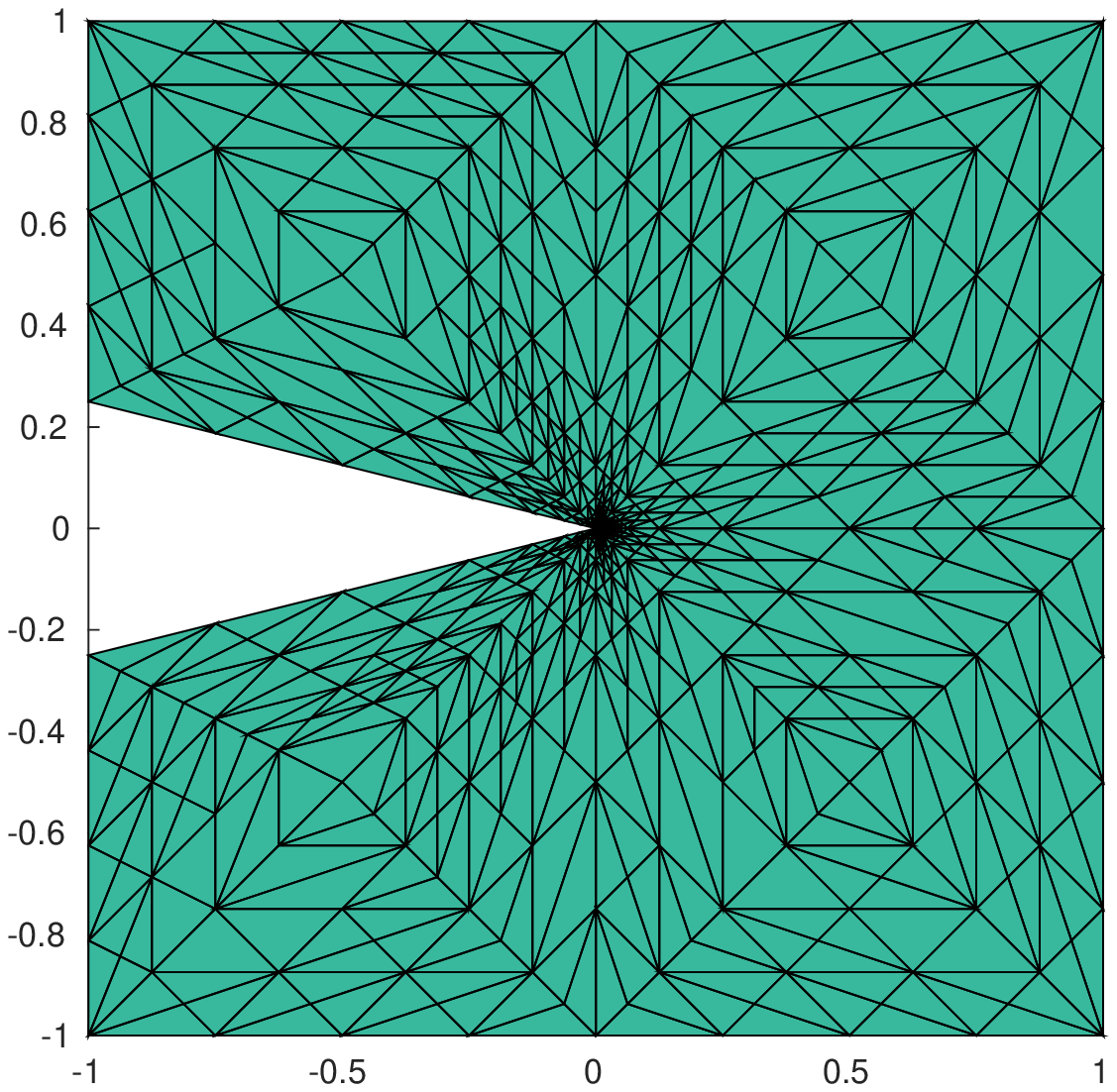}\\
\footnotesize$\#\TT_{12} = 952$
\end{minipage}
\caption{Adaptively generated meshes $\TT_\ell$ in the experiment from Section~\ref{section:ex2} for $\kappa =2$ and $\theta=0.2$.}
\label{fig:ex2:meshes}
\end{figure}
%
\subsection{Experiment with mixed boundary conditions}\label{section:ex2}
We consider a Z-shaped domain with a symmetric opening at the re-entrant corner, see Figure~\ref{fig:ex2:meshes}. The marked nodes read $(-1,\pm t)=(-1, \pm 0.25)$. Analogously to the previous example, we expect a reduced convergence order $\OO(N^{-\beta/2})$ for uniform mesh-refinement with $\beta\approx0.5423$.
We prescribe the exact solution of the Helmholtz equation in polar coordinates $(r,\phi)$ by
\begin{align}\label{eq:ex2:exact_solution}
u(x,y) = r^{\beta} \cos \big( \beta \phi \big)
\end{align}
and define $f:=-\kappa^2u$ in $\Omega$ and $g:=\partial_nu$ on $\Gamma$. Note that $u$ has a generic singularity at the re-entrant corner $(0,0)$ of $\Omega$ and that $u|_{\Gamma_D}=0$ with the Dirichlet boundary $\Gamma_D := {\rm conv}\{(-1,\pm t),(0,0)\}$. Define the Neumann boundary $\Gamma_N := \partial\Omega\backslash\Gamma_D$ and note that $u$ is the unique weak solution 
of the mixed boundary value problem
\begin{align}\label{eq:ex2:problem}
	-\Delta u - \kappa^2u = f \text{ in }\Omega
	\quad\text{subject to}\quad
	u = 0 \text{ on } \Gamma_D \text{ and } \partial_nu = g \text{ on } \Gamma_N.
\end{align}
The weak formulation of this problem can be written in the variational formulation~\eqref{eq:weakform} with $\HH := H^1_D(\Omega) = \set{v\in H^1(\Omega)}{v|_{\Gamma_D}=0\text{ in the sense of traces}}$. Note that assumption~\oooldRevision{\eqref{axiom:infty}} is guaranteed by Proposition~\ref{prop:A:infty} even for standard D\"orfler marking. Moreover, since the exact solution $u$ is given, we can compute the error $\norm{u - U_\star}{H^1(\Omega)}$ besides the corresponding error estimator $\eta_\star$.

The empirical observations are similar to those of Section~\ref{section:ex1}; see Figure~\ref{fig:ex2:compare_marking}--\ref{fig:ex2:compare_theta}. Uniform mesh-refinement leads to suboptimal convergence behavior for both the error and the error estimator. Adaptive mesh-refinement resolves the geometric singularity at the re-entrant corner (see, e.g., Figure~\ref{fig:ex2:meshes}) and recovers the optimal convergence rate. Algorithm~\ref{algorithm} appears to be stable for all $\theta\in\{0.1,\dots,0.9\}$. Different choices of $\kappa\in\{1,2,4,8,16\}$  affect only the preasymptotic phase. Finally, there is no empirical difference between the standard D\"orfler marking and the expanded D\"orfler marking.

\bibliographystyle{alpha}
\bibliography{literature}

\newcommand{\etalchar}[1]{$^{#1}$}
\begin{thebibliography}{AFK{\etalchar{+}}13}

\bibitem[AFK{\etalchar{+}}13]{dirichlet3d}
Markus Aurada, Michael Feischl, Josef Kemetm\"uller, Marcus Page, and Dirk
  Praetorius.
\newblock Each {$H^{1/2}$}-stable projection yields convergence and
  quasi-optimality of adaptive {FEM} with inhomogeneous {Dirichlet} data in
  {$\R^d$}.
\newblock {\em ESAIM Math.\ Model.\ Numer.\ Anal.}, 47:1207--1235, 2013.

\bibitem[AFLP12]{estconv}
Markus Aurada, Samuel Ferraz-Leite, and Dirk Praetorius.
\newblock Estimator reduction and convergence of adaptive {BEM}.
\newblock {\em Appl. Numer. Math.}, 62(6):787--801, 2012.

\bibitem[BDD04]{bdd}
Peter Binev, Wolfgang Dahmen, and Ronald DeVore.
\newblock Adaptive finite element methods with convergence rates.
\newblock {\em Numer. Math.}, 97(2):219--268, 2004.

\bibitem[BI98]{MR1643064}
Philippe Bouillard and Frank Ihlenburg.
\newblock Error estimation and adaptivity for the finite element method in
  acoustics.
\newblock In {\em Advances in adaptive computational methods in mechanics
  ({C}achan, 1997)}, volume~47 of {\em Stud. Appl. Mech.}, pages 477--492.
  Elsevier, Amsterdam, 1998.

\bibitem[BI99]{ihlenburg}
Philippe Bouillard and Frank Ihlenburg.
\newblock Error estimation and adaptivity for the finite element method in
  acoustics: 2d and 3d applications.
\newblock {\em Comput. Methods Appl. Mech. Engrg.}, 176:147--163, 1999.

\bibitem[BISG97]{MR1471617}
Ivo Babu{\v{s}}ka, Frank Ihlenburg, Theofanis Strouboulis, and Srihari~K.
  Gangaraj.
\newblock A posteriori error estimation for finite element solutions of
  {H}elmholtz' equation. {I}. {T}he quality of local indicators and estimators.
\newblock {\em Internat. J. Numer. Methods Engrg.}, 40(18):3443--3462, 1997.

\bibitem[BN10]{bn}
Andrea Bonito and Ricardo~H. Nochetto.
\newblock Quasi-optimal convergence rate of an adaptive discontinuous
  {G}alerkin method.
\newblock {\em SIAM J. Numer. Anal.}, 48(2):734--771, 2010.

\bibitem[Bra01]{braess}
Dietrich Braess.
\newblock {\em Finite elements}.
\newblock Cambridge University Press, Cambridge, second edition, 2001.

\bibitem[BS08]{brennerscott}
Susanne~C. Brenner and L.~Ridgway Scott.
\newblock {\em The mathematical theory of finite element methods}, volume~15 of
  {\em Texts in Applied Mathematics}.
\newblock Springer, New York, third edition, 2008.

\bibitem[CFPP14]{axioms}
Carsten Carstensen, Michael Feischl, Marcus Page, and Dirk Praetorius.
\newblock Axioms of adaptivity.
\newblock {\em Comput. Math. Appl.}, 67(6):1195--1253, 2014.

\bibitem[CKNS08]{ckns}
J.~Manuel Cascon, Christian Kreuzer, Ricardo~H. Nochetto, and Kunibert~G.
  Siebert.
\newblock Quasi-optimal convergence rate for an adaptive finite element method.
\newblock {\em SIAM J. Numer. Anal.}, 46(5):2524--2550, 2008.

\bibitem[CN12]{cn}
J.~Manuel Cascon and Ricardo~H. Nochetto.
\newblock {Quasioptimal cardinality of {AFEM} driven by nonresidual
  estimators}.
\newblock {\em IMA J. Numer. Anal.}, 32(1):1--29, 2012.

\bibitem[Dem06]{demkowicz}
Leszek Demkowicz.
\newblock Babu\v{s}ka {$\Leftrightarrow$} {B}rezzi??
\newblock {\em ICES Report}, 06-08, 2006.

\bibitem[D{\"o}r96]{doerfler}
Willy D{\"o}rfler.
\newblock A convergent adaptive algorithm for {P}oisson's equation.
\newblock {\em SIAM J. Numer. Anal.}, 33(3):1106--1124, 1996.

\bibitem[FFP14]{ffp}
Michael Feischl, Thomas F{\"u}hrer, and Dirk Praetorius.
\newblock Adaptive {FEM} with optimal convergence rates for a certain class of
  nonsymmetric and possibly nonlinear problems.
\newblock {\em SIAM J. Numer. Anal.}, 52(2):601--625, 2014.

\bibitem[FPP14]{dirichlet2d}
Michael Feischl, Marcus Page, and Dirk Praetorius.
\newblock Convergence and quasi-optimality of adaptive {FEM} with inhomogeneous
  {D}irichlet data.
\newblock {\em J. Comput. Appl. Math.}, 255:481--501, 2014.

\bibitem[FPZ16]{goafem}
Michael Feischl, Dirk Praetorius, and {Kristoffer G. van der} {Zee}.
\newblock An abstract analysis of optimal goal-oriented adaptivity.
\newblock {\em SIAM J. Numer. Anal.}, 54:1423--1448, 2016.

\bibitem[GSS14]{gss}
Dietmar Gallistl, Mira Schedensack, and Rob~P. Stevenson.
\newblock A remark on newest vertex bisection in any space dimension.
\newblock {\em Comput. Methods Appl. Math.}, 14(3):317--320, 2014.

\bibitem[KPP13]{kpp}
Michael Karkulik, David Pavlicek, and Dirk Praetorius.
\newblock On 2{D} newest vertex bisection: optimality of mesh-closure and
  {$H^1$}-stability of {$L_2$}-projection.
\newblock {\em Constr. Approx.}, 38(2):213--234, 2013.

\bibitem[MN05]{mn2005}
Khamron Mekchay and Ricardo~H. Nochetto.
\newblock Convergence of adaptive finite element methods for general second
  order linear elliptic {PDE}s.
\newblock {\em SIAM J. Numer. Anal.}, 43(5):1803--1827, 2005.

\bibitem[MNS00]{mns}
Pedro Morin, Ricardo~H. Nochetto, and Kunibert~G. Siebert.
\newblock Data oscillation and convergence of adaptive {FEM}.
\newblock {\em SIAM J. Numer. Anal.}, 38(2):466--488, 2000.

\bibitem[MP15]{mp}
Philipp Morgenstern and Daniel Peterseim.
\newblock Analysis-suitable adaptive {T}-mesh refinement with linear
  complexity.
\newblock {\em Comput. Aided Geom. Design}, 34:50--66, 2015.

\bibitem[OPD05]{MR2201459}
J.~Tinsley Oden, Serge Prudhomme, and Leszek Demkowicz.
\newblock A posteriori error estimation for acoustic wave propagation problems.
\newblock {\em Arch. Comput. Methods Engrg.}, 12(4):343--389, 2005.

\bibitem[Rud91]{rudin}
Walter Rudin.
\newblock {\em Functional analysis}.
\newblock International Series in Pure and Applied Mathematics. McGraw-Hill,
  Inc., New York, second edition, 1991.

\bibitem[SH96]{MR1393800}
James~R. Stewart and Thomas~J.R. Hughes.
\newblock A posteriori error estimation and adaptive finite element computation
  of the {H}elmholtz equation in exterior domains.
\newblock {\em Finite Elem. Anal. Des.}, 22(1):15--24, 1996.

\bibitem[SS11]{sauterschwab}
Stefan~A. Sauter and Christoph Schwab.
\newblock {\em Boundary element methods}, volume~39 of {\em Springer Series in
  Computational Mathematics}.
\newblock Springer-Verlag, Berlin, 2011.

\bibitem[Ste07]{stevenson07}
Rob Stevenson.
\newblock Optimality of a standard adaptive finite element method.
\newblock {\em Found. Comput. Math.}, 7(2):245--269, 2007.

\bibitem[Ste08]{stevenson08}
Rob Stevenson.
\newblock The completion of locally refined simplicial partitions created by
  bisection.
\newblock {\em Math. Comp.}, 77(261):227--241, 2008.

\end{thebibliography}

\end{document}